\newtheorem{theorem}{Theorem}[section]
\newtheorem{lemma}[theorem]{Lemma}
\newtheorem{corollary}[theorem]{Corollary}
\newtheorem{proposition}[theorem]{Proposition}
\theoremstyle{definition}
\newtheorem{definition}[theorem]{Definition}
\newtheorem{example}[theorem]{Example}
\theoremstyle{remark}
\newtheorem{remark}[theorem]{Remark}
\newtheorem{problem}[theorem]{Problem}
\numberwithin{equation}{section}
\newcommand{\GL}{\operatorname{GL}}
\newcommand{\op}{\operatorname{op}}
\newcommand{\Vect}{\operatorname{Vect}}
\newcommand{\Rep}{\operatorname{Rep}}
\newcommand{\Mod}{\operatorname{Mod}}
\newcommand{\Fun}{\operatorname{Fun}}
\newcommand{\Hom}{\operatorname{Hom}}
\newcommand{\Inj}{\operatorname{Inj}}
\newcommand{\Aut}{\operatorname{Aut}}
\newcommand{\Pic}{\operatorname{Pic}}
\newcommand{\Inn}{\operatorname{Inn}}
\newcommand{\End}{\operatorname{End}}
\newcommand{\Ext}{\operatorname{Ext}}
\newcommand{\ext}{\operatorname{ext}}
\newcommand{\Ker}{\operatorname{Ker}}
\newcommand{\Coker}{\operatorname{Coker}}
\newcommand{\Id}{\operatorname{Id}}
\newcommand{\alg}{\operatorname{alg}}
\newcommand{\D}[1]{\mathcal{#1}}
\newcommand{\B}[1]{\mathbb{#1}}
\renewcommand{\-}{$-$}
\newcommand{\br}[1]{\overline{#1}}
\newcommand{\Br}{\operatorname{Br}}
\newcommand{\mr}[1]{\mathring{#1}}
\newcommand{\innerprod}[1]{\langle#1\rangle}
\newcommand{\sm}[1]{\left(\begin{smallmatrix}#1\end{smallmatrix}\right)}
\def\equalsfill{$\m@th\mathord=\mkern-7mu
\cleaders\hbox{$\!\mathord=\!$}\hfill
\mkern-7mu\mathord=$}
\begin{document}

\title{Categorical Homotopy I. Quivers}

\author{Jiarui Fei}
\address{MSRI, 17 Gauss way, Berkeley, CA 94720, USA}
\email{jfei@msri.org}
\thanks{}

\subjclass[2010]{Primary 18G99,\ 16D90; Secondary 16G20,\ 16E99,\ 18B99,\ 55U99,\ 55N99}

\date{}
\dedicatory{}
\keywords{Categorical Homotopy, Quiver Representation, Quiver Complex, Q-homotopy, Q-(co)homology, Q-nerve, Q-realization, Morita Equivalence, Dold-Kan Correspondence, (Co)face Configuration, Exceptional Configuration, Orthogonal Projection;\\
Simplicial Method, Homological Algebra, Test Category, Path Category, Graph Complex, Hochschild, Cyclic, Cubic, Dendroidal, Operad}

\begin{abstract} We quiver-interpret the classical simplicial theory - including the cosimplex category $\Delta$, Dold-Kan correspondence, and Hochschild homology - as a certain Q-homotopy theory of type $A$.
For the cyclic and cubical theories, we proceed analogously. Subsequently, we present far-reaching generalizations, using different types of quivers. Moreover, we explain how to construct certain categories as analogs of $\Delta$, and associate to each a Q-homotopy theory. We provide many examples, including such theories of type $D$.
\end{abstract}

\maketitle

\section*{Introduction}

In this series of notes, we try to develop a new homological algebra rooted in category and representation theory, especially quiver theory, rather than in classical topology. We hope that it can refine, if not replace, the traditional homological algebra in the future.

The following construction lies at the heart of the algebraic topology and homological algebra.
Consider a functor $S_{\D{C}}:S\to \D{C}$, where $\D{C}$ is a cocomplete category possibly $V$-enriched for some ``nice" category $V$.

It induces a pair of adjoint functors
$$|-|:\Fun(S^{\op},V)\rightleftharpoons \D{C}:N$$
between $\D{C}$ and the functor category $\Fun(S^{\op},V)$,
where $N$ behaves like a {\em nerve operation} and $|-|$ behaves like {\em geometric realization}.
In such a situation, one is led to study objects $c\in\D{C}$ in terms of their nerves $N(c)$ through the machinery of homological algebra. The situation is particularly nice if $S_{\D{C}}$ is a {\em dense functor}, as this ensures that the corresponding
nerve functor is fully faithful.
As a classical example, we take $S$ to be the cosimplex category $\Delta$, $\D{C}$ the category of topological spaces, with $S_{\D{C}}$ sending the abstract $n$-simplex $[n]$ to the standard topological $n$-simplex. In this case, $|-|$ is the usual geometric realization and $N$ the singular simplicial complex functor. Almost all classical cohomology theories arise from this construction for various choices of $\D{C}$. On the other hand, there have been only a few attempts to vary $S$.
In the 1980's, Grothendieck in {\em \`{A} la poursuite des champs} introduced {\em test categories} as certain variants of $\Delta$. The {\em weak equivalence} involved in his original definition still refers to the simplicial setting, but later he conjectured a more intrinsic characterization of this simplicial notion of weak equivalence, which was proved by Cisinski in \cite{Ci}. By contrast, we will radically depart from the simplicial world.

From a view of {\em quivers}, the category $\Delta^{\op}$ can be realized as follows:
the objects $[n]$ in $\Delta^{\op}$ are the module categories of quivers $A_{n+1}:$
$$\An{0}{}{1}{n-1}{}{n}{}{}.$$
the $i$-th coface map $\pi_i: [n]=\Mod(A_{n+1})\to [n-1]=\Mod(A_{n})$ is the right {\em orthogonal projection} from the simple representation $S_{i}$ for $i=0,1,\dots,n$;
the $i$-th degeneration map $\iota_i: [n-1]\to[n]$ is the exact embedding right adjoint to $\pi_i$.
\begin{equation}\label{eq:simpq} \vcenter{\xymatrix@C=5ex@R=0ex{
[n]  \ar@{_<-^>}[rr]^{\pi_n,\dots,\pi_0}_{\iota_{n-1},\dots,\iota_{0}} &&
[n-1]  \ar@{_<-^>}[r] & \cdots \ar@{_<-^>}[r] & [2]  \ar@{_<-^>}[rr]^{\pi_2,\pi_1,\pi_0}_{\iota_1,\iota_0} &&
[1] \ar@{_<-^>}[rr]^{\pi_1,\pi_0}_{\iota_0} && [0] }
}\end{equation}
One can verify the {\em simplicial identities}:
\begin{align*} & \pi_j \pi_i = \pi_{i-1} \pi_j\quad \text{ if } j<i, \\
& \pi_j \iota_i= \begin{cases} \iota_{i-1} \pi_j & \text{if } j<i, \\
e_* & \text{if } j=i \text{ or } i+1, \\
\iota_{i} \pi_{j-1} & \text{if } j>i+1, \end{cases} \\
& \iota_i \iota_j = \iota_{j} \iota_{i-1}\qquad \text{ if } j<i. \end{align*}

There is nothing too special about the $A_n$-quivers in the quiver world. So the first purpose of this note is to consider other choices of $S$ from the quiver setting. We will define the category $\Delta(\B{Q})$ for every {\em coface configuration} $\B{Q}$.

A {\em simplicial object} in an abelian category $\D{A}$ is a functor $\Delta^{\op}\to\D{A}$. In our language, it is a {\em representation} of the {\em quiver with relations} \eqref{eq:simpq} in $\D{A}$.
To every simplicial object, one can associate chain complexes, which are representations of the linear quiver with relations $dd=0$:
$$\An{n}{d}{n-1}{2}{d}{1}{d}{d}.$$
Historically the first such a complex is the {\em unnormalized Moore complex}. Later there is the {\em normalized} construction and {\em Dold-Kan correspondence}, which is the key bridge connecting the classical homological algebra and homotopy theory. Let $N$ be the normalized chain complex functor from the category of simplicial objects to the category of chain complexes. The Dold-Kan correspondence says that $N$ has a left adjoint $K$ such that both $KN$ and $NK$ are identities.
This fact is related to {\em Morita equivalence} of the algebras of the above two quivers with relations.
The second purpose of us is to define a {\em quiver complex} associated to every functor $\Delta(\B{Q})\to\D{A}$.
This is done by generalizing the Dold-Kan correspondence through Morita theory. 

The word quiver in the title has at least three-fold meaning. First we use quivers to construct the category $\Delta(\B{Q})$. Second we associate a quiver complex to any functor $\Delta(\B{Q})\to\D{A}$. Finally, we use quivers to construct concrete {\em Q-homotopy theories}, generalizing most classical (co)homology theories, such as \v{C}ech, Koszul, de Rham, Hochschild, Connes's cyclic homology. Let us take the Hochschild complex as an example. The $i$-th coface map $A^{\otimes n}\to A^{\otimes n-1}$ can be visualized as:
\begin{align*}
\An{0}{a_1}{1}{n-1}{a_{n}}{n}{a_2}{a_{n-1}}\qquad\quad \mapsto\\
\xymatrix@C=5ex{0 \ar[r]^{a_1}  & \cdots \ar[r]^{a_{i-1}} & i-1 \ar[rr]^{\quad a_ia_{i+1}} & & i \ar[r]^{a_{i+2}\ } & \cdots \ar[r]^-{a_n} & n-1}
\end{align*}
resembling the application of the functor $\pi_i$ on a representation of dimension $(1,1,\dots,1)$.
We summarize in a slogan that the {\em classical simplicial theory is a Q-homotopy theory of type $A$}.

This notes is organized as follows.
In Section \ref{S:QRS}, we introduce the notion {\em quiver with relations and sections (QRS)}. It provides a very general framework for us to work with throughout. We study Morita theory for the algebra of a QRS. Our first main result, Theorem \ref{T:Br}, offers a way to find the associated basic algebra with the equivalence functor, using certain elementary operation called {\em breaking}. Then we introduce in Definition \ref{D:QHT} the Q-homotopy theory in the category of $k$-modules. We work out several simple examples. In the end, we introduce the {\em GReedy condition} which will be considered in Section \ref{S:Delta}.

In Section \ref{S:Qcomp}, we generalize the definition of Q-homotopy theory from the category of $k$-modules to any abelian category. The key observation is Lemma \ref{L:Meq}. Then we explain why this definition generalizes the classical theories by first quiver-interpreting the classical Dold-Kan correspondence (Proposition \ref{P:CDK}). The results on {\em symclic} and cubical objects (Proposition \ref{P:SDK} and \ref{P:CuDK}) seem mildly new.

In Section \ref{S:OP}, we review some basics on the orthogonal projections in the category of quiver representations. Those are the building blocks for the {\em quiver exceptional configurations (QEC)} in the next section. Our second main results are Theorem \ref{T:rel} and Corollary \ref{C:rel2}, which are extremely useful for finding relations in a QEC. Other results like Lemma \ref{L:sproj} and its corollary will be frequently used as well.

In Section \ref{S:Delta}, we introduce in Definition \ref{D:DeltaQ} our most important category $\Delta(\B{Q})$ for any coface configuration $\B{Q}$. We are mainly interested in the coface configurations in the category $\D{CC}(Q)$ (Definition \ref{D:CCQ}). Using this, we quiver-interpret many classical categories, such as the simplex category, the cycle category, and the $n$-cube category (Example \ref{Ex:BAn}, \ref{Ex:BAncyc}, and \ref{Ex:BSn}). Based on many examples, we propose in Definition \ref{D:QECgen} and \ref{D:MGC} a way to produce interesting QEC's.


In Section \ref{S:QECwd}, we introduce a subclass of QEC's called QEC with dimension vectors (QECwd), which is crucial for our application in the last section. We have two interesting examples, which can be called QECwd's of type $D$ in certain sense. Their corresponding Q-homotopy theories are studied in Proposition \ref{P:Dn1} and \ref{P:Dn2}. These are our third main results. Example \ref{Ex:BBn} is another interesting example.

In Section \ref{S:deloop}, we introduce {\em delooped} coface configuration. This makes the computation of the {\em classical} Q-homotopy much easier. We quiver-interpret the symcle category (Example \ref{Ex:symcle}), and give examples of delooped QECwd's of type $D$ (Example \ref{Ex:Dn1dl} and \ref{Ex:Dn2dl}).

In Section \ref{S:App}, we explain how to start from a QECwd and construct concrete Q-homotopy theory for general mathematical objects. We do this especially for algebras, generalizing Hochschild and cyclic homology. Together with previous examples, we fully construct Q-homotopy theory of type $D$ for commutative algebras in Example \ref{Ex:NAsrooted} and \ref{Ex:NADn2}. 

In Section \ref{S:Discuss}, we propose several important open problems. In Appendix, we mention the unnormalized construction in our setting. After we finished this work, we feel that this construction is not as essential as the normalized one, but it is still quite interesting, especially for computational purpose. So we put this part in the appendix.

We always believe that the great Tao must be extremely simple.
We hope that this notes is accessible to most graduate students in mathematics. Some exposure to homological algebra and quiver theory is enough. Maybe the only involved section is Section \ref{S:OP}. However, this section is not crucial to understand our main ideas, especially if one is willing to accept results there.

\subsection*{Definitions, Notations, and Conventions}

We use \cite{We} as our standard reference on homological algebra; \cite{ASS} on representation theory of basic algebras; and \cite{DW2} especially on quiver theory. Readers should be able to find most unexplained definitions and notations there.

All our vectors are assumed to be row vectors, but all modules are left modules unless otherwise stated. This is opposite to the usual convention in representation theory that row vectors go with right modules, or vice versa. Unadorned $\otimes$'s and $\Hom$'s are always over the base ring $k$. The trivial dual $^*$ is $\Hom(-,k)$.

A quiver $Q$ is a directed graph with multiple arrows defined by a quadruple $(Q_V,Q_A,t,h)$. $Q_V$ and $Q_A$ are the sets of vertices and arrows respectively, and both $t$ and $h$ are maps from $Q_A$ to $Q_V$. An arrow $a\in Q_A$ goes from its {\em tail} $ta$ to its {\em head} $ha$. A quiver $Q$ is {\em finite} if both $Q_V$ and $Q_A$ are finite sets. We simplify the multiple arrows:
$$\vcenter{\xymatrix@C=9ex{
\bullet \ar@<2ex>[r]|{a} \ar@<1ex>[r]|{b} \ar@<0ex>[r]|{c} & \bullet \ar@<1ex>[l]|{d} \ar@<2ex>[l]|{e} &  \text{by}  &
\bullet \ar@{_<-^>}[r]^{a,b,c}_{d,e} & \bullet }
}$$

A path in $Q$ is a sequence of arrows $p = a_1a_2\cdots a_s$, with $ta_i=ha_{i+1}$ for all $i$. We define $tp=ta_s$ and $hp=ha_1$. For each vertex $v\in Q_V$, we also define the trivial path $e_v$ of length 0, satisfying $te_v=he_v=v$. An oriented cycle is a nontrivial path satisfying $hp=tp$. All paths in $Q$ form a semigroup, where the product of two paths $p$ and $q$ is by definition their concatenation if $tp=hq$; and zero otherwise.

We fix a PID $k$. The {\em path algebra} $kQ$ is the semigroup ring over $k$. The path algebra is bigraded: $kQ=\bigoplus_{u,v\in Q_V}e_vkQe_u$. A {\em relation} $r=\sum_{i=1}^s c_ip_i$ with $c_i\in k$ and $p_i$ a path, is always assumed to be homogeneous with respect to the grading, i.e., there exist $tr,hr\in Q_V$ such that $tp_i=tr$ and $hp_i=hr$ for all $i$. Let $R$ be a set of relations. The algebra $k\D{Q}$ of the {\em quiver with relations} $\D{Q}=(Q,R)$ is formed from the path algebra $kQ$ by quotienting out the ideal $I$ generated by all relations in $R$. We also call $kQ/I$ a {\em quiver presentation} of this algebra $A$.

Let $\Mod(A)$ be the category of finitely generated left $A$-modules.
Let $P_v$ (resp. $I_v$) be the projective (resp. injective) module $Ae_v$ (resp. $e_vA$) corresponding to the vertex $v$. They are characterized by the property that
$$\Hom_A(P_v,N)=Ne_v=\Hom_A(N,I_v)^* \text{ for any } N\in\Mod(A).$$
Let $S_v$ be the rank-one $k$-free module on $v$. When $S_v$ (resp. $P_v$) is a subscript, we simply write $v$ (resp. $-v$). For example, $\pi_v$ is the same as $\pi_{S_v}$. When subscripts are integers, we indulge in the following interval notation: $$\B{S}_{[i,j]}:=\{S_i,S_{i+1},\dots,S_{j}\}.$$
If $i=1$, then we write $\B{S}_j$ instead. We also indulge in the following matrix notation:
$$\oplus \B{P}M:=\bigoplus_{u,v} M_{uv}P_v,$$
where $\B{P}$ is a row vector with entries $P_v$ and $M$ is a positive integer matrix.


\subsection*{A List of Quivers}
We list all quivers that we will frequently use below.
$$A_n: \An{1}{}{2}{n-1}{}{n}{}{}$$
$$\mr{A}_n: \Ancyc{}{}{}{n-1}{n}$$
$$A_n^1: \Anonefixed$$
$$D_n: \Dnfixed$$
$$D_n^*: \Dndual{}{}{}{}{}{n-2}{n-1}{\quad n}$$
$$S_n: \Sn$$
$$B^3:\extendedtameAfour{}{}{}{}{}{},\qquad B^2:\extendedtameAthree{1}{2}{3}{4}{}{}{}{}{},\qquad B^1:\extendedtameAtwo{1}{2}{3}{}{}{u}{l},$$
$$B':\Bprime{1}{2}{3}{}{}{}{},\qquad B^0:\Bzero{1}{2}{}{}{}{}.$$

\section{Quivers with Relations and Sections} \label{S:QRS}

We fix a PID $k$. Before section \ref{S:OP}, we never think $k$ as any nice field. We suggest that readers take $k$ to be the ring of integers $\B{Z}$.
Let $Q$ be a finite quiver and $R$ be a set of relations of $Q$. If two non-loop arrows $a$ and $a^r$ satisfy relation $aa^r=e_{ha}$, then we call $a^r$ a {\em section} of $a$. We denote the set of all sections by $S$ or $Q_A^r$, and the set of all such relations by $R(S)$. Moreover, we write $Q_A^l$ or $P$ for the set of all {\em projections}, that is, non-loop arrows having a section. We require that $S\cap P$ is empty.

\begin{definition} The above collection of data $\D{Q}=(Q,R,S)$ is called a {\em quiver with relations and sections}, or QRS in short.
\end{definition}

There is an obvious dual way to describe such data using projections.
The corresponding dual notion is called a {\em quiver with relations and projections}, or QRP in short.
We have the $k$-algebra $k\D{Q}$ formed from the path algebra $kQ$ by quotient out the ideal generated by all relations in $R$. We assume throughout that $k\D{Q}$ is finite-dimensional if we do a base change to any field. An easy observation is

\begin{lemma} \label{L:22} The $k$-submodule spanned by $a,a^r,a^ra,e_{ha}$ is isomorphic to the $2\times 2$ matrix algebra over $k$.
\end{lemma}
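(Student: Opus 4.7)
My plan is to exhibit an explicit isomorphism with $M_2(k)$ by mapping the four generators to matrix units and then verifying both the multiplication table and the linear independence separately.

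Concretely, let $u = ta$ and $v = ha$; since $a$ is non-loop we have $u \ne v$. Set $f = a^r a$. I would send
\[
e_v \mapsto E_{11},\qquad f \mapsto E_{22},\qquad a \mapsto E_{12},\qquad a^r \mapsto E_{21}.
\]
The first step is to check that this is a well-defined $k$-algebra homomorphism, i.e., that the sixteen products of the four elements in $k\D{Q}$ match the multiplication table of matrix units in $M_2(k)$. The essential inputs are: the defining relation $aa^r = e_v$; the idempotency $f^2 = a^r(aa^r)a = a^r a = f$; the vanishing $a^2 = 0 = (a^r)^2$ which follows because $u\ne v$ forces $t a \ne ha$ (so these products cannot compose in $kQ$); and the orthogonality $e_v f = f e_v = 0$ since $f$ lives at the vertex $u \ne v$. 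Each of the remaining entries (such as $af = (aa^r)a = a$ and $fa^r = a^r(aa^r) = a^r$) is then a one-line check.

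Next I need linear independence of $\{e_v, f, a, a^r\}$ over $k$, which is equivalent to injectivity of the above map. Here I would use the natural $Q_V\times Q_V$-bigrading $k\D{Q} = \bigoplus_{x,y\in Q_V} e_y k\D{Q} e_x$, which descends from $kQ$ because relations in $R$ are assumed to be homogeneous. The four elements lie in four distinct bigraded components: $e_v \in e_v k\D{Q} e_v$, $f \in e_u k\D{Q} e_u$, $a \in e_v k\D{Q} e_u$, $a^r \in e_u k\D{Q} e_v$, and these components are pairwise distinct since $u \ne v$. Hence any $k$-linear relation among them forces each coefficient to vanish, provided each individual element is nonzero. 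But $a \ne 0$ and $a^r \ne 0$ by assumption (they are prescribed arrows of $Q$), $e_v \ne 0$ because the relations are parallel and do not involve trivial paths nontrivially, and $f \ne 0$ because $af = a \ne 0$ forces $f \ne 0$.

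The only subtle point — and the main thing to be careful about — is the linear independence, specifically confirming that $e_v$ survives in the quotient $k\D{Q} = kQ/I$. I would justify this by noting that the generators of $I$ all live in strictly positive-length bigraded components except possibly $aa^r - e_v$, which identifies $e_v$ with a length-two path in $e_v k\D{Q} e_v$ but does not trivialize it; combined with the standing finite-dimensionality hypothesis on $k\D{Q}$, all four bigraded pieces $e_y k\D{Q} e_x$ that concern us remain nonzero, which is what the argument needs. Once this is in hand, the composite $M_2(k) \to k\D{Q}$ is a $k$-algebra monomorphism whose image is exactly the $k$-span of $\{e_v, f, a, a^r\}$, and the lemma follows.
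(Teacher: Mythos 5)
Your proposal is correct, and the matrix-unit verification (sending $e_{ha}\mapsto E_{11}$, $a^ra\mapsto E_{22}$, $a\mapsto E_{12}$, $a^r\mapsto E_{21}$, checking the sixteen products against $aa^r=e_{ha}$ and the vertex bookkeeping, then invoking the bigrading for independence) is exactly the intended ``easy observation'' --- the paper supplies no proof at all. The one point you rightly flag as subtle, namely that $e_{ha}$ (and hence all four elements) survives in $kQ/I$ and spans a free rank-one $k$-summand, is not actually delivered by the finite-dimensionality hypothesis as you suggest; it is a nondegeneracy assumption the lemma tacitly makes (without it the span could collapse), so it is fine to state it as such rather than derive it.
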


We must point out that being a section in an algebra $A$ is a relative notion, which depends on the choice of {\em vertex idempotents} in the quiver presentation $kQ/I$ of $A$. The definition of the section and projection has an obvious generalization from arrows to any $a\in A$.
There is an absolute notion called {\em weak section}. If $a,a^r\in A$ such that $aa^r$ is an idempotent, then we call $a^r$ a weak section of $a$, and denote the idempotent $aa^r$ by $e_{ha}$. Despite of the notation $a^r$, we should keep in mind that a fixed $a\in A$ may have more than one (weak) section. The notation $a^r$ is useful for simplifying many things.

\begin{definition} Let $e$ be the unit of $A$ and $e'=e-a^ra$. ``{\em Breaking $a$}" is an operation $\Br_a$ to the algebra $A$ defined by $\Br_a(A)=e'Ae'$. For any $c\in A$ we denote $e'ce'$ by $\Br_a(c)$. Sometimes by abuse of notation, we write $\Br_a$ for the functor $e'(-):\Mod(A)\to\Mod(\Br_a(A))$.
\end{definition}

\begin{lemma} \label{L:Br} The new algebra $\Br_a(A)$ is Morita equivalent to $A$. The inverse right adjoint to the breaking $\Br_a$ is the functor $\Hom_{\Br_a(A)}(\Br_a(A)\oplus P_{ha},-)$, where $P_{ha}=e'Ae_{ha}$ is the projective $\Br_a(A)$-module corresponding to the idempotent $e_{ha}$.
\end{lemma}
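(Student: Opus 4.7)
\medskip

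The plan is to identify $e'$ as a full idempotent in $A$ and then apply the classical Morita-equivalence theorem for corner algebras; the formula for the inverse functor will drop out of computing the decomposition of $e'A$ as a left $\Br_a(A)$-module.

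\medskip

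First I would check that $e'$ is an idempotent. Since $a a^r=e_{ha}$ and $e_{ha} a = a$, one computes $(a^r a)(a^r a)=a^r(aa^r)a=a^r e_{ha} a=a^r a$, so $a^r a$ is an idempotent and hence so is its complement $e'=e-a^r a$. Next I would verify fullness, i.e.\ $Ae'A=A$. Note that $a^r\in Ae_{ha}$ because $a^r e_{ha}=a^r$ (as $ta^r=ha$), and that $e_{ha}=e'e_{ha}e'\in e'Ae'\subseteq Ae'A$ since $a$ is a non-loop arrow forces $ha\neq ta$. Then $Ae_{ha}=A\cdot e_{ha}\subseteq Ae'A$, which contains $a^r$ and therefore also $a^r a$. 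Combined with the trivial inclusion $e'\in Ae'A$, the identity $e=e'+a^r a$ lies in $Ae'A$, whence $Ae'A=A$. The standard Morita theorem now furnishes the first statement: $A\simeq e'Ae'=\Br_a(A)$, with the equivalence realized by $\Br_a(M)=e'M=\Hom_A(Ae',M)$.

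\medskip

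For the second statement, the tensor-hom adjunction makes $\Hom_{\Br_a(A)}(e'A,-)$ right adjoint to $\Br_a(-)=e'(-)\cong\Hom_A(Ae',-)$, and under Morita equivalence any such right adjoint is an inverse equivalence. It remains to identify the $(\Br_a(A),A)$-bimodule $e'A$ in the asserted form. I would split off the summand $\Br_a(A)=e'Ae'$ using the decomposition $e=e'+a^r a$:
\[
e'A=e'Ae\oplus e'A\cdot a^r a=\Br_a(A)\oplus e'Aa^r a,
\]
as left $\Br_a(A)$-modules. Then I would define maps
\[
\phi:P_{ha}=e'Ae_{ha}\longrightarrow e'Aa^r a,\ \ x\mapsto xa,\qquad
\psi:e'Aa^r a\longrightarrow e'Ae_{ha},\ \ y\mapsto ya^r,
\]
and check they are mutually inverse $\Br_a(A)$-linear maps. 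Well-definedness of $\phi$ uses $xa=x(aa^r)a=(xa)a^r a$ and of $\psi$ uses $ya^r e_{ha}=ya^r$. The composite $\psi\phi(x)=xaa^r=xe_{ha}=x$ for $x\in Ae_{ha}$, and $\phi\psi(y)=ya^r a=y$ for $y\in Aa^r a$ by idempotency of $a^r a$. Thus $e'A\cong\Br_a(A)\oplus P_{ha}$, giving the claimed description of the inverse functor.

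\medskip

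The only really delicate step is the identification $e'Aa^r a\cong P_{ha}$; all the other ingredients are routine once the idempotent bookkeeping is set up. The potential pitfall is making sure the maps $\phi$ and $\psi$ land in the correct submodules and respect the left $\Br_a(A)$-action, but both reduce immediately to the defining relation $aa^r=e_{ha}$ together with $e_{ha}a=a$ and $a^r e_{ha}=a^r$, so nothing deeper than Lemma~\ref{L:22} is needed.
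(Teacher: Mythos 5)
Your proof is correct and follows essentially the same route as the paper: the heart of both arguments is the decomposition $e'A=e'Ae'\oplus e'Aa^ra$ together with the identification $e'Aa^ra\cong e'Ae_{ha}=P_{ha}$ via right multiplication by $a^r$ and $a$, which is exactly the paper's computation. The only difference is in how the Morita equivalence itself is justified: you check directly that $e'$ is a full idempotent ($Ae'A=A$), whereas the paper observes $Ae_{ha}\cong Aa^ra$ (so that $Ae'$ is a progenerator) and cites \cite[Theorem I.6.8]{ASS}; both are standard, and your version is, if anything, more self-contained.
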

\begin{proof} It is enough to show that $Ae_{ha}\cong Aa^ra$ as $A$-modules. The isomorphism is given by the right multiplication by $a$ with inverse the right multiplication by $a^r$. By \cite[Theorem I.6.8]{ASS}, the inverse right adjoint to $\Br_a$ is given by $\Hom_{\Br_a(A)}(e'A,-)$, but
$$e'A=e'A(e'+a^ra)=\Br_a(A)\oplus e'Aa^ra\cong \Br_a(A)\oplus e'Aaa^r = \Br_a(A)\oplus P_{ha}.$$
\end{proof}

Let $C$ be a complete set of {\em primitive orthogonal idempotents} \cite[I.4]{ASS} of $A$.
\begin{definition}
A partition $\bigsqcup_i\{e_{ij}\}$ of $C$ is called {\em basic} if $\{e_{i1}\}_i$ is a {\em basic set} of $C$, that is, if we set $b=\sum_i e_{i1}$, then $A^b=bAb$ is the {\em basic algebra} \cite[Definition I.6.3]{ASS} associated to $A$. We call a quiver presentation of $A$ {\em basic} if its vertex-idempotents $e_i=\sum_j e_{ij}$ form a basic partition of $C$.
\end{definition}

\begin{example} The quiver $\xymatrix{2 \ar@{_<-^>}[r]^{a}_{b} & 1  }$ with relations $ab=e_1,ba=e_2$ is not a basic presentation because it is the $2\times 2$ matrix algebra, which is simple. But this is not a QRS because $a$ is both a section and a projection. The quiver $\xymatrix{1 \ar@(ru,rd)^{a} }$ with relations $a^2=e_1$ is not a basic presentation when $2$ is invertible in $k$ because the algebra is $k\times k$ with $(1,0)$ and $(0,1)$ identified with $\frac{1}{2}(e_1-a)$ and $\frac{1}{2}(e_1+a)$.

\end{example}

In this notes, all quiver presentations are arranged to be basic. If $a$ has a section $a^r$, then $a^ra\neq e_{ta}$, i.e., $a^r$ is not a projection. Otherwise $e_{ta}A\cong e_{ha}A$, which is a contradiction. $\Br_a(A)$ naturally inherits vertex-idempotents from $A$ by replacing $e_{ta}$ by $e_{ta}-a^ra$ but keeping everyone else. By Lemma \ref{L:Br}, the new partition by vertices is basic.
Readers may keep in mind that we will mainly deal with
\begin{equation}\text{algebras associated to QRS's, whose vertex-idempotents give a basic partition.} \tag*{\Smiley} \end{equation}

In general, $\Br_a(a_0^r)$ can be a section but not necessary a section of $\Br_a(a_0)$ in $\Br_a(A)$. However, it is quite clear that this is the case if $ta\neq ta_0$.
Assume $\Br_a(a_0^r)$ is a section for $\Br_a(a_0)$, then we write $\Br_{(a_0,a)}:=\Br_{\Br_a(a_0)}\circ \Br_{a}$. More generally, we can recursively define $\Br_{(a_k,\dots,a_2,a_1)}$ tacitly assuming $\Br_{(a_i,\dots,a_1)}(a_{i+1}^r)$ are all sections of $\Br_{(a_i,\dots,a_1)}(a_{i+1})$ in $\Br_{(a_i,\dots,a_1)}(A)$. We call it {\em breaking at the sequence} $(a_k,\dots,a_2,a_1)$.

\begin{definition} An algebra is called {\em weakly crisp} if one can get its basic algebra by consecutive breakings. It is called {\em crisp} if it is weakly crisp and the consecutive breakings can be given as a breaking at a sequence.
\end{definition}

\begin{theorem} \label{T:Br} Let $B=\Br_{(a_k,\dots,a_2,a_1)}(A)$, then $B$ is Morita equivalent to $A$. The inverse right adjoint to $\Br_{(a_k,\dots,a_2,a_1)}$ is the functor $\Hom_{B}(P,-)$, where $P$ is the projective module $\oplus \B{P}E_1E_2\cdot\cdots\cdot E_k $, and $E_i$ is the elementary matrix obtained from the $n\times n$ identity matrix by adding $1$ on the $(ha_{i},ta_{i})$-entry.
Moreover, for any $A$-module $M$, on each vertex $v$ $\Br_{(a_k,\dots,a_2,a_1)}(M)$ is
$$\Coker(\sum_{ta_i=v}a_i^r)= \bigcap_{ta_i=v}\Ker(a_i).$$
\end{theorem}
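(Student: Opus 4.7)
The plan is to proceed by induction on $k$, with Lemma \ref{L:Br} providing the base case $k = 1$. Since Morita equivalences compose to Morita equivalences, and each $\Br_{a_i} : \Mod(A_{i-1}) \to \Mod(A_i)$ is one by Lemma \ref{L:Br}, the composition $\Br_{(a_k, \dots, a_1)} : \Mod(A) \to \Mod(B)$ is a Morita equivalence; hence $B$ is Morita equivalent to $A$.

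To identify the inverse right adjoint, I would compose the right adjoint functors from Lemma \ref{L:Br} at each step. Writing $M_i := e'_i A_{i-1}$ for the $(A_i, A_{i-1})$-bimodule appearing in step $i$, and noting that by Lemma \ref{L:Br} we have $M_i \cong A_i \oplus P_{ha_i}^{A_i}$ as left $A_i$-modules, tensor--Hom adjunction yields the composite right adjoint $\Hom_B(P, -)$ with $P = M_k \otimes_{A_{k-1}} \cdots \otimes_{A_1} M_1 \cong e'_k e'_{k-1} \cdots e'_1 A$, as a left $B$-module. I would then match this with $\oplus \B{P} E_1 \cdots E_k$ by induction on $k$: Lemma \ref{L:Br} at step $i$ asserts that each indecomposable projective $P_u^{A_{i-1}}$ appearing in the previous progenerator either stays as $P_u^{A_i}$ (when $u \neq ta_i$) or splits as $P_{ta_i}^{A_i} \oplus P_{ha_i}^{A_i}$ (when $u = ta_i$); packaging this transition as multiplication by the elementary matrix $E_i$ and iterating gives the stated matrix-product formula.

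For the vertex-wise description $\Br(M)_v = \Coker(\sum_{ta_i = v} a_i^r) = \bigcap_{ta_i = v} \ker(a_i)$, I would first dispatch the single-breaking case: on the vertex $v = ta$, the element $a^r a$ is an idempotent on $M_v$ (using $a a^r = e_{ha}$) with image $a^r M_{ha}$ and complementary summand $(1 - a^r a) M_v = \ker(a|_{M_v})$; thus $\Br_a(M)_v = e'M_v \cong \ker(a) \cong M_v / a^r M_{ha} = \Coker(a^r)$, while for $u \neq ta$, $\Br_a(M)_u = M_u$. Iterating, the kernel description $\bigcap_{ta_i = v} \ker(a_i)$ follows because at each step the new kernel of $\Br(a_i)$ restricts to $\ker(a_i) \cap M_v$ inside the accumulated kernel submodule. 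The cokernel version is equivalent via the compatible direct sum decomposition $M_v = \bigcap_{ta_i = v} \ker(a_i) \oplus \sum_{ta_i = v} a_i^r M_{ha_i}$, whose existence one checks inductively using that each $\Br(a_i^r)$ differs from $a_i^r$ only by terms lying in the earlier $\sum_{j<i} a_j^r M_{ha_j}$, so the total image sum is unchanged under the iterated modifications.

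The main obstacle is the matrix bookkeeping: when the breakings cascade (i.e., when $ha_i = ta_j$ for some $i < j$), the new projective $P_{ha_i}^{A_i}$ introduced at step $i$ itself undergoes further splitting at step $j$, and this produces off-diagonal cross terms in the product $E_1 E_2 \cdots E_k$. Verifying that these combinatorial cross terms precisely match the module-theoretic decomposition of the iterated progenerator into indecomposable $B$-projectives, consistently with the compatibility assumption that each $\Br_{(a_{i-1}, \dots, a_1)}(a_i^r)$ remains a section of $\Br_{(a_{i-1}, \dots, a_1)}(a_i)$, is the technical heart of the argument.
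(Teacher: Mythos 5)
Your proposal is correct and follows essentially the same route as the paper: induction from Lemma \ref{L:Br}, the tensor--Hom adjunction to realize the composite right adjoint as $\Hom_B(e'_k\cdots e'_1A,-)$ with the elementary matrices $E_i$ recording how each projective splits at each step, and the vertex-wise identification $e'M_v=\Coker(a^r)\cong\Ker(a)$ iterated along the sequence. Your treatment of the iterated kernel/cokernel description and of the cascading splittings is in fact more explicit than the paper's one-line sketch.
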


\begin{proof} Almost everything follows from Lemma \ref{L:Br} through induction.
For the formula of the inverse functor, we use the adjunction formula: $$\Hom_{A}(M\otimes_{A_1} N,-)=\Hom_{A_1}(M,\Hom_{A}(N,-)).$$
For the last statement, we notice that on each vertex $v$, $\Br_{a}(M)$ is $\Coker(a^r)\cong \Ker(a)$ as a $k$-module, and $\Br_a(a_0),\Br_a(a_0^r)$ are just the restriction of $a_0$ and $a_0^r$.
\end{proof}

Let $b$ be the sum of a basic set of $C$. When the algebra $A$ is (weakly) crisp, we always take $b$ to be the one obtained by applying the breakings to the identity $e$ of $A$. The following two definitions do not depend on the choice of $b$.
\begin{definition} \label{D:complex} Let $\nu:M\mapsto bM$ be the Morita equivalence functor, and we call it {\em normalized quiver complex functor} in our context.
\end{definition}

Let $\underline{\Mod}(A^b)$ be the {\em injectively stable category} \cite[IV.2]{ASS} of $\Mod(A^b)$ and $q$ be the quotient functor $\Mod(A^b)\to \underline{\Mod}(A^b)$.

\begin{definition} \label{D:QHT} The composite $\underline{\pi}=q \nu$ is called the {\em Q-homotopy functor} of $A$. For any $T\in\Mod(A^b)$, the {\em $i$-th classical homotopy relative to $T$, or $T$-homotopy} is the functor $\pi_i(T,-):=\Ext_{A^b}^i(T,\nu(-))$.

We can replace the injectively stable category by the {\em projectively stable category}, and define the {\em Q-cohomotopy functor} $\br{\pi}$. The $T$-cohomotopy functor is $\pi^i(-,T):=\Ext_{A^b}^i(\nu(-),T)$.
\end{definition}

\begin{example} \label{Ex:simple3} Consider the following quiver
$$\vcenter{\xymatrix@R=0ex{
3 \ar@{_<-^>}[rr]^{b_3,b_2,b_1}_{b_2^r,b_1^r} & & 2 \ar@{_<-^>}[rr]^{a_2,a_1}_{a_1^r} & & 1 }
} $$
with relations \begin{align*}
& a_j a_i^r=e_1,\quad b_j b_i^r=e_2, & \text{if } & j=i \text{ or } i+1 \\
& a_j b_i = a_{i-1}b_j, & \text{if } & j<i \\
& b_2^r a_1^r = b_1^r a_1^r,\quad
b_1 b_2^r = a_1^r a_1,\ b_3 b_1^r = a_1^r a_2. &
\end{align*}
If we choose the basic set to be $$\{e_{31}=e_3-b_2^rb_2+b_2^rb_1-b_1^rb_1,e_{21}=e_2-a_1^r a_1,e_{11}=e_1\},$$
then $A^b$ is the algebra presented by
$$\vcenter{\xymatrix{
3 \ar[rr]^{b_3'} && 2 \ar[rr]^{a_2'} && 1  }
}$$
where $b_3'=e_{21}b_3e_{31}=b_3-b_2+b_1-b_3b_1^rb_1$ and $a_2'=e_{11}a_2e_{21}=a_2-a_1$. Interested readers can verify that this set of idempotents is obtained from the breaking at the sequence $(b_2,a_1,b_1)$ or $(a_1b_2,b_2,b_1,a_1)$.
Note that $a_2'b_3'=0$ gives the relation of $A^b$.

Moreover on $\Ker(b_1)\cap\Ker(b_2)$, $b_3-b_2+b_1-b_3b_1^rb_1$ reduces to $b_3$, but on $\Coker(b_2^r-b_1^r)$, it reduces to $b_3-b_2+b_1$.
So $\nu(M)$ can be simplified as
\begin{align*}\vcenter{\xymatrix{
\Coker(b_2^r-b_1^r) \ar[rr]^{\quad b_3-b_2+b_1} && \Coker(a_1^r) \ar[rr]^{\ a_2-a_1} && M(1) }
}\\
\vcenter{\xymatrix{
\Ker(b_1)\cap\Ker(b_2) \ar[rr]^{\qquad b_3} && \Ker(a_1) \ar[rr]^{a_2} && M(1) }
}\end{align*}
By Theorem \ref{T:Br}, the inverse of $\nu$ is represented by the projective representation $P=\oplus\sm{1&2&1\\0&1&1\\0&0&1}\sm{P_3\\P_2\\P_1}$. Note that two sequence of breakings give two factorizations:
$$\sm{1&0&1\\0&1&0\\0&0&1}\sm{1&1&0\\0&1&0\\0&0&1}\sm{1&1&0\\0&1&0\\0&0&1}\sm{1&0&0\\0&1&1\\0&0&1}=\sm{1&2&1\\0&1&1\\0&0&1}=\sm{1&1&0\\0&1&0\\0&0&1}\sm{1&0&0\\0&1&1\\0&0&1}\sm{1&1&0\\0&1&0\\0&0&1}.$$
\end{example}

\begin{example} \label{Ex:cycle3} We add some loops to the previous example:
$$\vcenter{\xymatrix@R=0ex{
3 \ar@{_<-^>}[rr]^{b_3,b_2,b_1}_{b_2^r,b_1^r} \ar@(ul,ur)|{t} & & 2 \ar@{_<-^>}[rr]^{a_2,a_1}_{a_1^r} \ar@(ul,ur)|{t} & & 1 }
} $$
satisfying additional relations \begin{alignat*}{2}
t^i & =e_i, & a_i t & =a_j\quad (i\neq j),\\
b_i^{(r)}t & =tb_{i-1}^{(r)}, & b_1t & = b_3, b_1^r t = t^2 b_2^r.
\end{alignat*}
Choose the same vertex-idempotents, then $A^b$ is the algebra presented by
$$\vcenter{\xymatrix{
3 \ar@{_<-^>}[rr]^{b'_3}_{{b'}^{r}} \ar@(ul,ur)|{t'} && 2 \ar@{_<-^>}[rr]^{a'_2}_{{a'}^{r}} \ar@(ul,ur)|{t'} && 1  }
},$$
where ${b'}^{r}=e_{31}(-tb_2^r)e_{21}=-tb_2^r-b_2^r+b_1^r+tb_2^rb_1b_2^r, {a'}^{r}=e_{21}(ta_1^r)e_{11}=ta_1^r-a_1^r$, and $t'=e_{i1}te_{i1}$. We have the following relations for $A^b$:
\begin{align*} & a_2'b_3'=0,\quad {b'}^{r}{a'}^{r}=0, \\
& a_2'{a'}^{r}=0,\quad b_3'({b'}^{r}-{b'}^{r}t')+{a'}^{r}a_2'=0, \\
& \text{and many other relations involving }t'. 
\end{align*}
Note that if $2$ or $3$ is invertible in $k$, the algebra $A^b$ is not basic.
\end{example}

\begin{example} \label{Ex:symcle3} Instead of adding loops, we add arrows $b^r=-tb_2^r(e_3-t),a^r=ta_1^r:$
$$\vcenter{\xymatrix@R=0ex{
3 \ar@{_<-^>}[rr]^{b_3,b_2,b_1}_{b^r,b_2^r,b_1^r} & & 2 \ar@{_<-^>}[rr]^{a_2,a_1}_{a^r,a_1^r} & & 1 }
} $$
Choose the same vertex-idempotents, then $A^b$ is the algebra presented by
$$\vcenter{\xymatrix{
3 \ar@{_<-^>}[rr]^{b_3'}_{B^{r}} && 2 \ar@{_<-^>}[rr]^{a_2'}_{A^{r}} && 1  }
}$$
with relations \begin{align*} & a_2'b_3'=0,\quad B^{r}A^{r}=0, \\
& a_2'A^{r}=0,\quad b_3'B^{r}+A^{r}a_2'=0,
\end{align*}
where $B^r=b^r+b_2^rb_2tb_2^r-b_2^rtb_3b_1^r-b_2^r+b_2^rt+b_1^r-b_1^rt, A^r=a^r-a_1^r$.
This time $\nu(M)$ can be simplified as: $$\vcenter{\xymatrix{
\Coker(b_2^r-b_1^r) \ar@{_<-^>}[rr]^{\quad b_3-b_2+b_1}_{\quad b^r} && \Coker(a_1^r) \ar@{_<-^>}[rr]^{\ a_2-a_1}_{\ a^r} && M(1). }
}$$
\end{example}

\begin{example} \label{Ex:cube3} Consider the following quiver
$$\vcenter{\xymatrix@R=0ex{
3 \ar@{_<-^>}[rr]^{b_2,b_1,b_{-2},b_{-1}}_{b_{-2}^r,b_{-1}^r} && 2  \ar@{_<-^>}[rr]^{a_1,a_{-1}}_{a_{-1}^r} && 1}
} $$
with relations \begin{align*}
& a_{\epsilon} a_{-1}^r=e_1,\quad b_{i\epsilon} b_i^r=e_2, \\
& a_{\epsilon}b_{2\epsilon}=a_{\epsilon}b_{\epsilon},\ a_{-\epsilon}b_{2\epsilon}=a_{\epsilon}b_{-\epsilon}, \\
& b_{-2}^r a_{-1}^r=b_{-1}^r a_{-1}^r,\\
& b_{2\epsilon} b_{-1}^r = b_{\epsilon} b_{-2}^r = a_{-1}^r a_{\epsilon},\quad \text{where $\epsilon=\pm 1$}.
\end{align*}
If we choose the basic set to be $$\{e_{31}=e_3-b_{-2}^rb_{-2}-b_{-1}^rb_{-1}+b_{-2}^rb_{-2}b_{-1}^rb_{-1},e_{21}=e_2-a_{-1}^ra_{-1},e_{11}=e_1\},$$
then $A^b$ is the algebra presented by
$$\vcenter{\xymatrix@R=0ex{
3 \ar[rr]^{b_2', b_1'} && 2  \ar[rr]^{a_1'} && 1}
} $$
where \begin{align*}& b_1'=e_{21}b_1e_{31}=b_1-b_{-1}-b_1b_{-2}^rb_{-2}+b_{-1}b_{-2}^rb_{-2}, \\
& b_2'=e_{21}b_2e_{31}=b_2-b_{-2}-b_2b_{-1}^rb_{-1}+b_{-2}b_{-1}^rb_{-1},\\
& a_1'=e_{11}a_1e_{21}=a_1-a_{-1}. \end{align*}
Interested readers can verify that this set of idempotents is obtained from the breaking at the sequence $(b_{-2},a_{-1},b_{-1})$ or $(a_{-1}b_{-1},b_{-2},b_{-1},a_{-1})$.
Note that $a_1'b_1'=a_1'b_2'$ gives the relation of $A^b$.
So $\nu(M)$ can be simplified as:
\begin{align*}\vcenter{\xymatrix{
\Coker(b_{-2}^r-b_{-1}^r) \ar[rr]^{\quad b_1-b_{-1},}_{\quad b_2-b_{-2}} && \Coker(a_{-1}^r) \ar[rr]^{a_1} && M(1) }
}\\
\vcenter{\xymatrix{
\Ker(b_{-1})\cap\Ker(b_{-2}) \ar[rr]^{\qquad b_2,b_1} && \Ker(a_{-1}) \ar[rr]^{a_1} && M(1) }
}\end{align*}
The inverse of $\nu$ is represented by the projective representation $P=\oplus\sm{1&2&1\\0&1&1\\0&0&1}\sm{P_3\\P_2\\P_1}$.
\end{example}

\begin{example} \label{Ex:cubewp3} We add a loop to the previous example:
$$\vcenter{\xymatrix@R=0ex{
3 \ar@{_<-^>}[rr]^{b_2,b_1,b_{-2},b_{-1}}_{b_{-2}^r,b_{-1}^r} \ar@(ul,ur)|{t} && 2  \ar@{_<-^>}[rr]^{a_1,a_{-1}}_{a_{-1}^r} && 1}
} $$
satisfying additional relations \begin{align*}
t^2=e_3,\quad b_{i\epsilon}t=b_{j\epsilon},\quad tb_{i\epsilon}^r=b_{j\epsilon}^r,\ (i\neq j).
\end{align*}
Choose the same basic set, then $A^b$ is the algebra presented by
$$\vcenter{\xymatrix@R=0ex{
3 \ar[rr]^{b_1'}  \ar@(ul,ur)|{t'} && 2  \ar[rr]^{a_1'} && 1}
} $$
with relations
$${t'}^2=e_3,\ a_1'b_1'=a_1'b_1't'.$$
We a similar description for $\nu(M)$.
Note that this algebra is of finite representation type over a field.
\end{example}

\begin{definition} \label{D:GReedy} A {\em degree function} on a $(Q,R,S)$ is an injective map $d:Q_V\to \B{Z}$ such that $d(u)>d(v)$ if there is a section $a:v\to u$. A QRS is called {\em gradable} if it admits a degree function.
A QRS with a degree function is called {\em GReedy} if every path $p$ factors as $p=sr$, where $r$ (resp. $s$) is a composite of arrows not raising (resp. not lowering) the degree.
Moreover, $s$ and $r$ are unique up to some loops.
\end{definition}

The name GReedy comes from the generalized Reedy category considered by topologists \cite{BM}. By induction we can easily see that to verify the existence part of the GReedy condition, it is enough to verify for path of form $p=a_r a_s$, where $a_r$ (resp. $a_s$) is an arrow not raising (resp. not lowering) the degree.
In this notes, unless otherwise stated, we always consider the degree functions given by the vertex numbering of quivers.
We can check that all examples so far are GReedy. Vaguely speaking, if one wants to reduce the complexity of the representation theory of a GReedy QRS, then one can add sections and in the meanwhile keep the GReedy condition.

\begin{example} For example, the quiver $\xymatrix{2  \ar@<0.5ex>[r]^{a}\ar@<-0.5ex>[r]_{b} & 1}$
is trivially GReedy. We add one section $b^r$ such that $bb^r=e_1$, violating the GReedy condition. Its basic algebra is
$\xymatrix{2  \ar[r]^{a} & 1 \ar@(ur,dr)|{ab^r} }$, whose representation theory is even more complicated than the original one. To keep it GReedy, we need additional relation $ab^r=0$ or $ab^r=e_1$, then its basic algebra reduces to a simpler one $\xymatrix{2  \ar[r]^{a} & 1 }$.
\end{example}

\section{Quiver Complexes} \label{S:Qcomp}

Given a quiver with relations $\D{Q}=(Q,R)$, we can associate a $k$-category \cite[Definition A.1.4]{ASS} $\D{P}(\D{Q})$ as follows. The objects are the vertices of $Q$, and the set of morphisms from $u$ to $v$ are $k$-linear combinations of paths in $Q$ from $u$ to $v$. Composition of morphisms in $\D{P}(\D{Q})$ is defined through concatenation of paths. The relations among morphisms inherit the relations in $R$.
Sometimes we treat a quiver with relations $\D{Q}$ directly as the category $\D{P}(\D{Q})$.

\begin{definition} Given a quiver with relations $\D{Q}$, its representation in a $k$-category $\D{C}$ is a $k$-linear covariant functor $\rho:\D{Q}\to \D{C}$. If the algebra $k\D{Q}$ is basic, such a representation is also called a {\em quiver complex}.
For arrows $a,a^r$ with relation $aa^r=e_{ha}$, $\rho(a)$ is called a {\em coface map} or {\em projection}, and $\rho(a^r)$ a {\em face map} or {\em section}.
\end{definition}

\begin{definition} Let $A$ be a $k$-algebra. A representation of $A$ in a $k$-category $\D{C}$ is a $k$-algebra morphism $A\to \End_{\D{C}}(M)$ for some $M\in\D{C}$.
\end{definition}

The following lemma can be proved by repeating word by word the proof of \cite[Theorem III.1.6]{ASS}.
\begin{lemma} Suppose that $A$ is the algebra associated to a quiver with relations $\D{Q}$. The category $\Rep(A,\D{A})$ of representations of $A$ in a $k$-linear category $\D{A}$ is equivalent to the category $\Rep(\D{Q},\D{A})$ of representations of $\D{Q}$ in $\D{A}$.
\end{lemma}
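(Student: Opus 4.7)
The plan is to mimic the classical bound-quiver proof \cite[Theorem III.1.6]{ASS}, recording where $\D{A}$ replaces $\Mod k$ and where we must use the abstract $k$-linear structure rather than vector-space operations. Throughout I assume $\D{A}$ is $k$-linear with finite biproducts (and, where needed, splits idempotents, which is automatic for the abelian categories in our applications).

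First I would construct a functor $F:\Rep(\D{Q},\D{A})\to\Rep(A,\D{A})$. Given a representation $\rho:\D{P}(\D{Q})\to\D{A}$, set
$$M_\rho\;=\;\bigoplus_{v\in Q_V}\rho(v),$$
and for each path $p=a_1\cdots a_s$ in $Q$ define $\rho(p)=\rho(a_1)\cdots\rho(a_s)$, extended as the matrix endomorphism of $M_\rho$ whose only nonzero entry is $\rho(p)$ from the $tp$-component to the $hp$-component. Since $\rho$ respects the relations in $R$, this assignment factors through the quotient $kQ/I=A$ and yields a $k$-algebra map $A\to\End_{\D{A}}(M_\rho)$. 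Functoriality in $\rho$ is routine.

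Next I would construct a quasi-inverse $G:\Rep(A,\D{A})\to\Rep(\D{Q},\D{A})$. Given $\varphi:A\to\End_{\D{A}}(N)$, the vertex idempotents $\{e_v\}$ map to a complete system of orthogonal idempotents in $\End_{\D{A}}(N)$, so by the splitting-of-idempotents hypothesis we obtain a biproduct decomposition $N\cong\bigoplus_{v}N_v$ with $N_v$ the image of $\varphi(e_v)$. For each arrow $a:u\to v$, the morphism $\varphi(a)=\varphi(e_v)\varphi(a)\varphi(e_u)$ restricts to a morphism $\rho(a):N_u\to N_v$, giving a functor $\D{P}(Q)\to\D{A}$; the relations in $R$ hold because $\varphi$ kills the ideal $I$, so $\rho$ descends to a representation of $\D{P}(\D{Q})$.

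Finally I would check that $GF$ and $FG$ are naturally isomorphic to the respective identities: $GF(\rho)$ reconstructs the summands $\rho(v)$ via the idempotents cut out by the matrix action, and $FG(\varphi)$ reassembles $\bigoplus N_v$ back to $N$ via the canonical biproduct isomorphism, with the $A$-action reproduced from the matrix entries $\rho(a)$. Both natural isomorphisms are $k$-linear.

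The main obstacle — and the only point where the proof is not a literal transcription of \cite[III.1.6]{ASS} — is ensuring that idempotents split in $\D{A}$ so that $N_v=\Img\varphi(e_v)$ and the biproduct decomposition $N=\bigoplus N_v$ actually exist. For the abelian target categories used throughout this paper this is automatic (kernels and cokernels exist, so images do too), and no other step requires anything beyond $k$-linearity and finite biproducts. Everything else is the standard matrix/arrow translation.
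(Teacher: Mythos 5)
Your proposal is correct and is essentially the paper's proof: the paper simply asserts that one repeats the argument of \cite[Theorem III.1.6]{ASS} word for word, and your write-up is exactly that matrix/arrow translation carried out explicitly. Your added remark that one needs finite biproducts and split idempotents in $\D{A}$ (automatic for the abelian categories used here) is a sensible clarification of a hypothesis the paper leaves implicit, not a departure from its method.
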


\begin{lemma} Let $A,B$ be two $k$-algebras. The category of representation of $A$ in the category of right $B$-modules is the same as the category of $(A,B)$-bimodules.
\end{lemma}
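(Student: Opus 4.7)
The plan is to unpack both sides of the claimed equivalence and exhibit mutually inverse functors between them. By the preceding definition, a representation of $A$ in $\Mod\text{-}B$ (right $B$-modules) is a choice of object $M\in\Mod\text{-}B$ together with a $k$-algebra morphism $\rho:A\to\End_{\Mod\text{-}B}(M)$. On the other side, an $(A,B)$-bimodule is a $k$-module $M$ carrying commuting left $A$- and right $B$-actions that are $k$-balanced.

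First I would observe that for any right $B$-module $M$, the $k$-algebra $\End_{\Mod\text{-}B}(M)$ consists precisely of those $k$-linear endomorphisms $\varphi:M\to M$ satisfying $\varphi(mb)=\varphi(m)b$ for all $m\in M,\ b\in B$. Then, given $\rho:A\to\End_{\Mod\text{-}B}(M)$, I define a left $A$-action on $M$ by $a\cdot m:=\rho(a)(m)$. The fact that $\rho$ is a $k$-algebra morphism translates directly into the axioms $(aa')\cdot m=a\cdot(a'\cdot m)$, $1\cdot m=m$, and $k$-linearity in $a$; the fact that each $\rho(a)$ is right $B$-linear translates into $(a\cdot m)b=a\cdot(mb)$. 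Together with the already-present right $B$-module structure, this is exactly the data of an $(A,B)$-bimodule. The inverse construction is immediate: given a bimodule $M$, left multiplication by $a$ defines an element $\rho(a)\in\End_{\Mod\text{-}B}(M)$ (right $B$-linearity is the commutation of the two actions), and associativity of the left action makes $\rho$ a $k$-algebra morphism.

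For the morphism part, a morphism between two representations $\rho:A\to\End_{\Mod\text{-}B}(M)$ and $\rho':A\to\End_{\Mod\text{-}B}(M')$ is, in $\Rep(A,\Mod\text{-}B)$, a right $B$-linear map $f:M\to M'$ such that $f\circ\rho(a)=\rho'(a)\circ f$ for every $a\in A$. Under the correspondence above this says $f(a\cdot m)=a\cdot f(m)$, which is precisely the condition that $f$ be an $(A,B)$-bimodule homomorphism. Thus the two functors I described are mutually inverse on morphisms as well as on objects, yielding an isomorphism (indeed an equality, once one identifies the underlying data) of categories.

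I expect no real obstacle here: the statement is essentially the tautology that a left $A$-module structure on $M$ commuting with a right $B$-module structure is the same as a $k$-algebra map from $A$ into the right $B$-linear endomorphism algebra of $M$. The only thing to be a little careful about is the distinction between ``isomorphic'' and ``equal as categories''; since the paper's phrasing is ``the same as,'' it suffices to note that once a right $B$-module $M$ is fixed, the two pieces of data (a map $\rho$ versus a left $A$-action) are literally interchangeable, and morphisms are likewise identified on the nose.
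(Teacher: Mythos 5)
Your proposal is correct and follows essentially the same route as the paper: define the bimodule structure by $a\cdot m:=\rho(a)(m)$ on top of the given right $B$-action, and invert by reading off $\rho$ from left multiplication. The paper's proof is just a terser version of this; your extra care with the morphism identification is a welcome but routine addition.
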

\begin{proof} The proof goes almost formal. Given any $\rho:A\to\End_B(M)$ for some right $B$-module $M$, we define an $(A,B)$-bimodule on the same $k$-module $M$ by $(a,b)(m)=\rho(a)mb$. Conversely, for any $(A,B)$-bimodule $M$, we get a right $B$-module $M$ by forgetting the action of $A$. The action of $A$ defines $\rho:A\to\End_B(M)$.
\end{proof}

\begin{lemma} \label{L:Meq} Let $A,A'$ be two $k$-algebras. The categories of their representations in any $k$-linear abelian category $\D{A}$ are equivalent if and only if their module categories $\Mod(A)$ and $\Mod(A')$ are equivalent.
\end{lemma}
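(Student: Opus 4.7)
The plan is to handle the two implications separately. The forward direction is essentially free: specializing $\mathcal{A}=\Mod(k)$, the preceding lemma identifies $\Rep(A,\Mod(k))$ with the category of $(A,k)$-bimodules, i.e., with $\Mod(A)$, and likewise for $A'$. So if $\Rep(A,\mathcal{A})\simeq\Rep(A',\mathcal{A})$ holds for every $k$-linear abelian $\mathcal{A}$, this particular choice immediately yields $\Mod(A)\simeq\Mod(A')$.

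For the converse, assume $\Mod(A)\simeq\Mod(A')$ and invoke classical Morita theory \cite[Theorem I.6.8]{ASS} to obtain a progenerator $P$ of $\Mod(A)$ with $A'\cong\End_A(P)^{\op}$. Since $P$ is finitely generated projective, I would write $P=A^n e$ for an idempotent $e\in M_n(A)$, which gives $A'\cong eM_n(A)e$. Given a representation $(M,\rho\colon A\to\End_{\mathcal{A}}(M))$, extend $\rho$ entrywise to $\rho_n\colon M_n(A)\to \End_{\mathcal{A}}(M^n)$ using the biproduct in $\mathcal{A}$, and define $F(M):=\Img(\rho_n(e))$ by splitting the idempotent $\rho_n(e)$ in $\mathcal{A}$ (any abelian category is idempotent complete). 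The subalgebra $eM_n(A)e$ restricts to act on $F(M)$ because each $\rho_n(eae)=\rho_n(e)\rho_n(eae)\rho_n(e)$ factors through the image, so this defines $F\colon\Rep(A,\mathcal{A})\to\Rep(A',\mathcal{A})$. The symmetric construction applied to a companion progenerator $Q\in\Mod(A')$ with $A\cong\End_{A'}(Q)^{\op}$ yields the candidate quasi-inverse $G\colon\Rep(A',\mathcal{A})\to\Rep(A,\mathcal{A})$.

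The main obstacle will be showing $GF\cong\Id$ and $FG\cong\Id$ at the level of $\mathcal{A}$-representations. Classically the inverse pair comes from the Morita bimodule isomorphisms $P\otimes_{A'}Q\cong A$ and $Q\otimes_A P\cong A'$; in the present setting these translate to identities among composite idempotents inside the matrix algebras $M_{nm}(A)$ and $M_{mn}(A')$, whose images under $\rho$ split in $\mathcal{A}$ to recover the original representations functorially. The delicate point is phrasing the Morita argument entirely in terms of finite biproducts and image factorizations, both automatic in any abelian category, so that no extra completeness hypothesis on $\mathcal{A}$ is needed; once this is done, the bimodule identities, applied matrix-entry by matrix-entry through $\rho$, produce the required natural isomorphisms and complete the equivalence.
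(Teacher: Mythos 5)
Your proposal is correct and takes a genuinely different route from the paper's. For the converse, the paper embeds $\D{A}$ into $\Mod(B)$ via Freyd--Mitchell, identifies $\Rep(A,\Mod(B))$ with $\Mod(A\otimes B^{\op})$, checks that $B\otimes P$ is projective with $\End_{(B,A)}(B\otimes P)\cong B\otimes A'^{\op}$, and then applies Morita's theorem a second time, restricting the resulting equivalence back to $\D{A}$. You instead stay inside $\D{A}$, realizing the Morita equivalence through the idempotent presentation $A'\cong eM_n(A)e$ and transporting it along $\rho$ using only finite biproducts and idempotent splitting. What this buys: it avoids Freyd--Mitchell entirely (whose smallness hypothesis the paper's proof glosses over) and actually proves the statement for any idempotent-complete $k$-linear additive category, not just abelian ones. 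What it costs: the paper gets the quasi-inverse and the natural isomorphisms for free from Morita's theorem applied in $\Mod(A\otimes B^{\op})$, whereas you must verify $GF\cong\Id$ and $FG\cong\Id$ by hand. That verification, which you only sketch, is real but standard work: fullness of $e$ and of the companion idempotent $f$ supplies matrices $u,v$ over $A$ with $uv$ equal to the composite idempotent cutting $GF(M)$ out of $M^{nm}$ and $vu$ equal to the coordinate idempotent cutting out $M$ (this is precisely the isomorphism $Q\otimes_{A'}P\cong A$ of f.g.\ projectives, read entrywise); applying $\rho_{nm}$ to $u$ and $v$ gives the natural isomorphism, and its $A$-equivariance is the bimodule property of that isomorphism. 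I would write this step out once in full, since it is the only place the argument could silently fail.
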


\begin{proof} $``\Rightarrow"$ is trivial. Conversely if $\Mod(A)\cong\Mod(A')$, then by Morita's theorem that there is a projective right $A$-module $P$ such that $A'^{\op}\cong\End_{A}(P)$ and the equivalence of categories is given by the functor $_{A'}P_{A}\otimes -$. Using Freyd-Mitchell Embedding \cite[1.6.1]{We}, we embed $\D{A}$ into the category of $B$-modules for some $k$-algebra $B$. Then the category of representation of $A$ in $\Mod(B)$ is the same as the category $A\otimes B^{\op}$-modules.
We consider $B\otimes P$ as a right $A\otimes B^{\op}$-module, or equivalently a $(B,A)$-bimodule.
Since by adjunction $\Hom_{A\otimes B^{\op}}(B\otimes P,-)$ is
\begin{align*}  \Hom_{(B,A)}(B\otimes k\otimes P,-)=\Hom_{B}(B\otimes k,\Hom_A(P,-))=\Hom_A(P,-),\\
\intertext{$B\otimes P$ is projective with}
\End_{(B,A)}(B\otimes P)=\Hom_A(P,B\otimes P)=B\otimes \End_A(P)=B\otimes A'^{\op}.
\end{align*}
We use Morita's theorem again to conclude that their categories of representations in $\Mod(B)$ are equivalent. This equivalence clearly restricts to representations in $\D{A}$.
\end{proof}

Since every abelian category is at least $\B{Z}$-linear, we mostly take the base ring to be $\B{Z}$. To simplify our argument, from now on we will assume our abelian category $\D{A}$ to be the category of $B$-modules. Let $A$ be the algebra corresponding to a quiver with relations, and denote $T:=A\otimes B^{\op}$. We will often think any $T$-module $V$ as a functor, either as $V\otimes -:\Mod(B)\to\Mod(A)$ or as $\Hom_A(-,V):\Mod(A)\to\Mod(B)$, but we prefer the latter. Let $P_1\xrightarrow{f} P_0$ be a map between two projective modules of $A$, then $f$ is a matrix with each entry a linear combination of paths. The application of the functor $\Hom_A(-,V)$ to $f$ can be viewed as an evaluation. Concretely, if $P_1$ and $P_0$ corresponds to vertex $v$ and $u$ respectively, and $f$ a path from $u$ to $v$, then $\Hom_A(f,V)$ is the map $V(f):e_u V \to e_v V$.

Let $\nu:\Mod(A)\to\Mod(A^b)$ be the (normalized) quiver complex functor. We denote by $\nu_{\D{A}}$ the equivalence induced from $\nu$ as in Lemma \ref{L:Meq}. Let $q$ still be the quotient functor $\Mod(A^b)\to \underline{\Mod}(A^b)$.

\begin{definition}
The composite $\underline{\pi}_{\D{A}}=q \nu_{\D{A}}$ is called the {\em Q-homotopy functor in $\D{A}$}. For any $T\in\Mod(A^b)$, the {\em $i$-th classical homotopy relative to $T$}, or {\em $T$-homotopy in $\D{A}$} is the functor $\pi_i(T,-):=\Ext_{A^b}^i(T,\nu_{\D{A}}(-))$. The cohomotopy functors are defined analogously.
\end{definition}

First we want to explain how the above definitions generalize the classical theory.
The truncated version of classical Dold-Kan correspondence says that the category of $n$-truncated simplicial objects in $\D{A}$ is equivalent to the category of chain complexes in $\D{A}$ concentrated in degree $1,2,\dots,n$.
The former is the category of representations in $\D{A}$ of the algebra $A$:
$$\BAnpi{}{}{}{}{}{}$$
satisfying the {\em simplicial relations}:
\begin{align} \label{eq:simprel1}
& \pi_j \pi_i = \pi_{i-1} \pi_j\quad \text{ if } j<i, \\
& \pi_j \iota_i= \begin{cases} \iota_{i-1} \pi_j & \text{if } j<i, \\
e_* & \text{if } j=i \text{ or } i+1, \\
\iota_{i} \pi_{j-1} & \text{if } j>i+1, \end{cases}\\
& \iota_j \iota_i = \iota_{i} \iota_{j-1}\qquad \text{ if } j>i.
\label{eq:simprel2} \end{align}
The latter is the category of representations in $\D{A}$ of the algebra of complexes $A^b$:
$$\An{n}{d}{n-1}{2}{d}{1}{d}{d}$$
The untruncated Dold-Kan correspondence can be obtained by applying the obvious colimit to the $n$-truncated ones.
The reason why we consider the truncated version first is that we want to avoid infinite-dimensional algebras.
From now on, we will treat the truncated cases only.

\begin{proposition} \label{P:CDK} The truncated Dold-Kan correspondence is induced from the functor $R=\Hom_{A^b}(P,-):\Mod(A^b)\to\Mod(A)$, where $P=\oplus\B{P}_nT$, and $T$ is an upper-triangular integer matrix $T$ obtained from Jia Xian's Triangle: $T_{ij}=\sm{j\\i}$ for $1\leq i\leq j \leq n$.
\end{proposition}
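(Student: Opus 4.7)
The plan is to apply Theorem \ref{T:Br} to the $n$-truncated simplicial algebra $A$, viewed as a QRS with degeneracies $\iota_s$ serving as sections of the projections $\pi_s$ (via $\pi_s\iota_s=e_*$). A carefully chosen sequence of breakings will reduce $A$ to the chain-complex algebra $A^b$; Theorem \ref{T:Br} then produces the inverse right adjoint as $\Hom_{A^b}(P,-)$ with $P=\oplus\B{P}_n T$, where $T$ is the ordered product of the corresponding elementary matrices.

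I would specify the breaking sequence stage by stage: stage $s$ breaks the projection $\pi_s$ at every level at which it is still available in the partially-broken algebra, with levels traversed in descending order within each stage. Verifying the validity of each successive breaking---that the relevant section survives as a section of the relevant projection in the current algebra---is a direct bookkeeping exercise using the simplicial identities $\pi_j\iota_i=\iota_{i-1}\pi_j$ (for $j<i$), $\pi_j\iota_i=\iota_i\pi_{j-1}$ (for $j>i+1$), and $\pi_j\iota_i=e_*$ (for $j=i,i+1$). These identities are designed so that sections propagate correctly through the breakings, leaving exactly one surviving projection at each vertex to become the chain differential.

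By Theorem \ref{T:Br}, $T$ is then the ordered product $E_1E_2\cdots E_m$ of elementary matrices, each of the form $I+e_{k-1,k}$ corresponding to a breaking at level $k$. Expanding the product and using $e_{a,b}e_{c,d}=\delta_{b,c}e_{a,d}$, the $(i,j)$-entry of $T$ counts the ways to select one elementary factor at each transition from level $i$ up to level $j$, with the selected positions forming an increasing subsequence of the overall ordering. Analyzing the position function in terms of the stage and level indices, the ordering constraint reduces to the requirement that the chosen stage indices $s_l$ be strictly increasing and satisfy $s_l\le i+l-1$ for each $l=1,\ldots,j-i$ (since $\pi_{s_l}$ must be available at level $i+l$).

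The substitution $u_l=s_l-(l-1)$ converts this into a count of weakly increasing sequences $0\le u_1\le u_2\le\cdots\le u_{j-i}\le i$, of which there are $\binom{i+(j-i)}{j-i}=\binom{j}{i}$. This yields $T_{ij}=\binom{j}{i}$ as claimed. The main obstacle will be the bookkeeping in the first two steps---verifying the validity of the breaking sequence via the propagation of sections through the simplicial identities, and confirming that the position arithmetic within the ordered product of elementary matrices enforces precisely the strict inequality $s_l<s_{l+1}$; the combinatorial identity at the end is a standard lattice-path count.
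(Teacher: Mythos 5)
Your proposal follows the paper's proof essentially verbatim: the paper likewise breaks stage by stage at the sequences $\mathbf{S}_1,\mathbf{S}_2,\dots,\mathbf{S}_{n-1}$ (each stage running through the levels from leftmost to rightmost), checks via the simplicial identities that each stage simply deletes the arrows $\pi_i,\iota_i$ while adjusting relations, and then invokes Theorem \ref{T:Br} to read off $P$ as $\oplus\B{P}$ times the ordered product of elementary matrices. Your explicit lattice-path count of the entries of that product is just a more detailed rendering of the paper's one-line assertion that the product equals $T$, so the two arguments coincide.
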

Moreover, since we have resolutions:
\begin{align*}& 0\to P_1\to P_2\to \cdots \to P_n\to S_n\to 0;\\
& 0\to S_1\to I_1\to I_2\to \cdots \to I_n\to 0.
\end{align*}
The ($n$-truncated) classical homology and cohomology are related to the $S_n$-homotopy and $S_1$-cohomotopy respectively.

\begin{proof} If we carefully inspects the proof of the classical Dold-Kan correspondence (eg. \cite[8.4.4]{We}), we can find that it already contains our statement. Here we give a slightly different proof.

It is not hard to verify that after breaking at the sequence $\mathbf{S}_1=(\pi_{1},\dots,\pi_{1},\pi_{1})$ from the leftmost to the rightmost, the quiver of the new algebra is obtained from the old one by simply removing all arrows $\pi_1$ and $\iota_1$. The new quiver has the same relations as the old one. Next we break at the sequence $\mathbf{S}_2=(\pi_{2},\dots,\pi_{2},\pi_{2})$ from the leftmost to the rightmost, then we keep the same except that we remove all arrows $\pi_2$ and $\iota_2$ and add one extra relation from vertex $3$ to $1$: $\pi_2\pi_3=0$.

By induction, consecutive breakings at sequences $(\mathbf{S}_{n-1},\dots,\mathbf{S}_2,\mathbf{S}_1)$ give the algebra of complexes, where $S_i$ is the sequence $(\pi_i,\dots,\pi_i,\pi_i)$ from the leftmost to the rightmost. Now everything follows from Theorem \ref{T:Br}. Note that $D_{n-1}\cdots D_2D_1=T$, where $D_i$ is the matrix obtained from the $n\times n$ identity matrix by adding ones on the $i$-th upper diagonal.
\end{proof}

It is well-known that the category of cyclic objects is related to the category of mixed complexes \cite[9.8]{We}.
The former is the category of representations $\D{A}$ of the following path algebra $A$:
$$\BAnpi{}{\ar@(ul,ur)|{t}}{}$$
satisfying the simplicial relations \eqref{eq:simprel1}--\eqref{eq:simprel2} and {\em cyclic relations}
\begin{align} \label{eq:cycrel1} & \pi_i t=t\pi_{i-1},\quad \iota_i t=t\iota_{i-1} \\
\label{eq:cycrel2} & \pi_1t = \pi_n,\quad \iota_1 t = t^2 \iota_n.
\end{align}
The latter is the category of representations in $\D{A}$ of the algebra $A^b$:
$$\mixedcomp{d}{B}$$
with relations
$$d^2=0,B^2=0,\text{  and  }dB+Bd=0.$$
They are almost but not exactly equivalent as we have seen in Example \ref{Ex:cycle3}.
In fact, the category of mixed complexes is equivalent to the category of {\em symclic objects}. A symclic object is something lies between a simplicial
object and a cyclic object:
$$\BAnpi{\iota,}{}{}$$
where $\iota:k-1\to k$ is $(-1)^k(t\iota_k)N$ and $N=e_*+\sum_{i=1}^{k-2}(-1)^{ik}t^i$. The proof of the following proposition is similar to that of Proposition \ref{P:CDK}.

\begin{proposition} \label{P:SDK} The above equivalence is induced from the functor $R=\Hom_{A}(P,-):\Mod(A^b)\to\Mod(A)$, where $P$ has the same formula as in Proposition \ref{P:CDK}. Moreover, since we have resolution:
$$\vcenter{\xymatrix@C=3ex@R=3ex{
0\ar[r] & P_1 \ar[r] & \cdots \ar[r] & P_{n-2} \ar[r]\ar[dr] \ar@{}[d]|{\oplus} & P_{n-1}\ar[r]\ar[dr]\ar@{}[d]|{\oplus} & P_{n}\ar[r]\ar@{}[d]|{\oplus} & P_{n-1}\ar[r]\ar@{}[d]|{\oplus} & P_{n-2}\ar[r] \ar@{}[d]|{\oplus} &\cdots \ar[r]  & P_1 \ar[r] & S_1\ar[r] & 0 \\
& & & \cdots\ar[r] & P_{n-3}\ar[r]\ar[dr] \ar@{}[d]|{\oplus} & P_{n-2}\ar[r]\ar[ur]\ar@{}[d]|{\oplus} & P_{n-3}\ar[r]\ar[ur] \ar@{}[d]|{\oplus} & \cdots & &  \\
& & & & \cdots\ar[r] &  P_{n-4} \ar[r]\ar[ur]\ar@{}[d]|{\oplus} & \cdots & & & \\
& & & & & \vdots & & & & &
}} $$
The ($n$-truncated) cyclic cohomology \cite[Proposition 9.8.3]{We} is nothing but the $S_1$-homotopy.
\end{proposition}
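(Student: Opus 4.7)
The plan is to follow the strategy of Proposition \ref{P:CDK} while additionally tracking how the loop $t$ and the new section $\iota = (-1)^k(t\iota_k)N$ descend through the breakings, and then to build the staircase projective resolution of $S_1$. First I would perform the same consecutive breakings at the sequences $(\mathbf{S}_{n-1},\dots,\mathbf{S}_2,\mathbf{S}_1)$ with $\mathbf{S}_i=(\pi_i,\dots,\pi_i)$ as in the simplicial case. The projection-degeneracy relations \eqref{eq:simprel1}--\eqref{eq:simprel2} are identical to the simplicial ones, so the algebra-of-complexes structure with differential $d$ and the same upper-triangular Pascal matrix $T$ emerge verbatim; Theorem \ref{T:Br} then identifies the inverse Morita equivalence with $\Hom_A(P,-)$ for $P=\bigoplus \mathbf{P}_n T$.

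The substantive step is to verify that after these breakings the loop $t$ and the section $\iota$ combine into a single degree-$(+1)$ operator $B$ on the resulting algebra of complexes, satisfying $B^2=0$ and $dB+Bd=0$. Using \eqref{eq:cycrel1}--\eqref{eq:cycrel2}, in particular $\pi_i t = t\pi_{i-1}$ and $\pi_1 t=\pi_n$, one checks that $N=e_*+\sum_{i=1}^{k-2}(-1)^{ik}t^i$ commutes with the remaining projections on $\bigcap_{j\geq 1}\Ker(\pi_j)$, so that $\iota=(-1)^k(t\iota_k)N$ restricts to a well-defined map in the broken algebra. The sign choice $(-1)^k$ together with the classical telescoping identity $(1-t)N=1-t^k$ then yields the anti-commutation $dB+Bd=0$, and $B^2=0$ follows because $N$ squares to zero on the broken cyclic subquotient. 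This signed bookkeeping is the main obstacle, but it is exactly the algebraic content of the mixed-complex/cyclic-object dictionary in \cite[9.8]{We}.

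Finally, for the resolution of $S_1$ I would construct it by splicing rotational copies of the simplicial projective resolution used in Proposition \ref{P:CDK}; the diagonal connecting maps encode the operator $B$, so that the total differential of the bicomplex is $d+B$. Exactness is the algebraic avatar of Connes's periodicity exact sequence at the truncated level, which follows from the same breaking computation applied to $S_1$ together with the explicit formula for $N$. Computing $\Ext^i_{A^b}(S_1,-)$ against this resolution then matches the $n$-truncated cyclic cohomology of \cite[Proposition 9.8.3]{We}, identifying it with the $S_1$-homotopy functor, as claimed.
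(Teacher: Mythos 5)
Your overall strategy --- redo the breakings at the sequences $(\mathbf{S}_{n-1},\dots,\mathbf{S}_2,\mathbf{S}_1)$ from Proposition \ref{P:CDK} and then track the extra degree-raising arrow through Theorem \ref{T:Br} --- is exactly the paper's (its entire proof is ``similar to that of Proposition \ref{P:CDK}''), and your description of the staircase resolution of $S_1$ as a spliced/total complex is the right picture. But there is a conceptual slip in the key verification step. The algebra $A$ in Proposition \ref{P:SDK} is the \emph{symclic} algebra: its generators are the simplicial $\pi_i,\iota_i$ together with the single extra section $\iota$; the loop $t$ is \emph{not} a generator (it only appears in the formula $(-1)^k(t\iota_k)N$ that locates $\iota$ inside the cyclic algebra). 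Your plan to let ``the loop $t$ and the new section $\iota$ \dots combine into a single degree-$(+1)$ operator $B$'' cannot work as stated: if $t$ is a generator of the algebra being broken, it survives every breaking as a loop $t'=e_{i1}te_{i1}$ on each vertex of the basic algebra, exactly as in Example \ref{Ex:cycle3}, and one lands on an algebra with ``many other relations involving $t'$'' that is not the mixed complex algebra (and is not even basic when $2$ or $3$ is invertible in $k$). This is precisely why the paper interposes the symclic category between cyclic objects and mixed complexes. The correct computation breaks the symclic algebra, where only $\iota$ descends to $B$; compare Example \ref{Ex:symcle3}, where $b^r\mapsto B^r$, $a^r\mapsto A^r$ and the mixed relations $B^rA^r=0$, $b_3'B^r+A^ra_2'=0$ come out.

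Two of the identities you invoke are also incorrect. Since $t^{k-1}=e_{k-1}$ at vertex $k-1$, the element $N=e_*+\sum_{i=1}^{k-2}(-1)^{ik}t^i$ is the full signed norm there, and the relevant identity is $(e_*\mp t)N=N(e_*\mp t)=0$ (sign depending on the parity of $k$), not ``$(1-t)N=1-t^k$''. Likewise $B^2=0$ does not follow because ``$N$ squares to zero'' --- $N^2$ is a nonzero multiple of $N$ --- but from the vanishing of $N(e_*\mp t)$ sandwiched between the two copies of $t\iota_\bullet$ in $\iota\iota$, after moving $t$'s past degeneracies via $\iota_it=t\iota_{i-1}$. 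With the symclic algebra in place of the cyclic one and these identities repaired, your outline does reduce to the paper's argument.
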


We also have the resolution for $0\to S_{n} \to P_{n}\to\cdots \to P_3\to P_2\to I_1\to 0$, so the $I_1$-homotopy is the (shifted) simplicial cohomology. Note that $S_2$ is the syzygy of $S_1: 0\to S_2\to P_1\to S_1\to 0$.
From the exact sequence $0\to S_1\to I_1\to S_2\to 0$, we obtain Connes's SBI sequence \cite[Proposition 9.6.11]{We} up to $\pi_{n-2}$:
$$\cdots\to\pi_{i-1}(S_1,-)\to\pi_i(S_2,-)\to\pi_i(I_1,-)\to\pi_i(S_1,-)\to\pi_{i+1}(S_2,-)\to\cdots.$$
Sensitive readers must find that the above method is related to the spectral sequence algorithm. In fact, this idea elaborated in \cite{F3} can be applied to any quiver complex (not necessary double complex). We refer the readers to Example \ref{Ex:Dn1dl} for another example.

A {\em cubical objects} in $\D{A}$ is a representation in $\D{A}$ of the algebra $A$
$$\BSnpi{}{}{}$$ satisfying the {\em cubical relations}:
\begin{align} \label{eq:cuberel1}
& \pi_{\epsilon i}\pi_{\delta j}=\pi_{\delta(j-1)}\pi_{\epsilon i} & \text{ if } i<j,\\
& \iota_i\iota_j = \iota_{j} \iota_{i-1} & \text{ if } i> j, \\
& \pi_{\epsilon i} \iota_j = \begin{cases} \iota_{j-1}\pi_{\epsilon i} & \text{ if } i<j \\  \iota_{j}\pi_{\epsilon(i-1)} & \text{ if } i>j \\ e_* & \text{ if } i=j
\label{eq:cuberel2} \end{cases}  & \text{ where $\delta$ and $\epsilon$ are $\pm 1$.} \end{align}
A {\em cubical object with permutations} in $\D{A}$ is a representation in $\D{A}$ of the algebra $A$
$$\BSnpiwp{}{}$$ satisfying the additional relations including the relations of $t_i$'s as the $i$-th transposition $(i,i+1)$ in the symmetric group $S_n$, and
\begin{align*}
\pi_{i\epsilon}t_{i}=\pi_{(i+1)\epsilon},\quad t_{i}\iota_{i\epsilon}=\iota_{(i+1)\epsilon}.
\end{align*}

\begin{proposition} \label{P:CuDK} The category of cubical objects in $\D{A}$ is equivalent to the category of representation $\D{A}$ of the algebra
$A^b$:
$$\An{n}{\mathbbm{d}_{n-1}}{n-1}{2}{d_1}{1}{\mathbbm{d}_{n-2}}{\mathbbm{d}_2}$$
satisfying the relations from vertex $k+1$ to $k-1$: $d_{j+1}d_i=d_jd_i$, where $d_{k+1}=d_1$.
The category of cubical objects with permutations in $\D{A}$ is equivalent to the category of representation $\D{A}$ of the algebra
$A^b:$
$$\Anwp{n}{d}{n-1}{d}{3}{d}{2}{d}{1}$$
satisfying the relations from vertex $k+1$ to $k-1$: $d_{j+1}d_i=d_jd_i$, where $d_i=dt_1t_2\cdots t_{i-1}$ and $d_{k+1}=d_1$.
Both equivalence is induced from the functor having the same formula as in Proposition \ref{P:CDK}.
\end{proposition}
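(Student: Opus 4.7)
My plan is to mimic the strategy of the proof of Proposition~\ref{P:CDK}, replacing the single tower of projections $\pi_i$ by the pair of towers indexed by $\pm$. The guiding principle is that the sections $\iota_{-i}$ let us break precisely at the ``negative'' projections $\pi_{-i}$, leaving the ``positive'' projections $\pi_{+i}$ to play the role of the differentials $d_i$.

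First, for the cubical case, I would perform consecutive breakings at the sequences $(\pi_{-1},\dots,\pi_{-1})$, $(\pi_{-2},\dots,\pi_{-2})$, \dots, $(\pi_{-(n-1)},\dots,\pi_{-(n-1)})$, each read from the leftmost transition to the rightmost. After the sequence $(\pi_{-m},\dots,\pi_{-m})$ is broken, the arrows $\pi_{-m}$ and $\iota_{-m}$ disappear at every transition, while the remaining $\pi_{\pm i}$ persist. The cubical relations~\eqref{eq:cuberel1}--\eqref{eq:cuberel2} are exactly the analogues of the simplicial identities, so the same inductive bookkeeping used in Proposition~\ref{P:CDK} shows that the only surviving relations after all breakings are $\pi_{+i}\pi_{+j}=\pi_{+(j-1)}\pi_{+i}$ for $i<j$. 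Relabelling $d_i:=\pi_{+i}$ and reindexing $j\mapsto j+1$ converts this to $d_{j+1}d_i=d_j d_i$ for $i\leq j$, which is the stated relation on the linear quiver $A^b$, with the boundary case $d_{k+1}=d_1$ arising because at each transition the index $i$ ranges only up to the size of the available $\pi_{+}$'s.

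The formula for the projective module $P$ is then forced by Theorem~\ref{T:Br}. Each elementary breaking at $\pi_{-m}$ at the transition $(k{+}1)\to k$ contributes a matrix $E$ that adds a $1$ in the $(k,k{+}1)$-entry, and these factors multiply (in the order dictated by the breakings) to give the upper-triangular matrix $T$ with $T_{ij}=\binom{j}{i}$ — exactly as in Proposition~\ref{P:CDK}. Indeed, the combinatorics of the product $D_{n-1}\cdots D_2 D_1 = T$ depends only on the pattern in which the breakings are layered, not on how many arrows are eliminated at each stage; this is consistent with the explicit low-dimensional verification in Example~\ref{Ex:cube3}, which already exhibits the Pascal matrix. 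Lemma~\ref{L:Meq} then promotes the Morita equivalence to an equivalence of representation categories in the arbitrary $k$-linear abelian category $\D{A}$.

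For the cubical objects with permutations, I would first observe that the loop algebra generated by the $\mathbbm{t}_i$ (the finite symmetric group at each vertex) is unaffected by breaking at $\pi_{-i}$: since the $t$-relations guarantee $\pi_{i\epsilon}t_i=\pi_{(i+1)\epsilon}$ and $t_i\iota_{i\epsilon}=\iota_{(i+1)\epsilon}$, the $t_i$'s normalise the idempotents used in the breaking and so descend to the broken algebra. After the same sequence of breakings as above, the permutation relations collapse the family $\{d_i\}$ at each transition into the single generator $d:=d_1$ via $d_i=dt_1t_2\cdots t_{i-1}$, which is precisely the stated presentation of $A^b$. The relations $d_{j+1}d_i=d_j d_i$ are inherited verbatim; the projective $P$ is unchanged because the breaking sequence and its matrix record are identical. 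The main potential obstacle — and the point where I would be most careful — is checking that no new relations are introduced when the loops pass through broken idempotents; this requires writing each $t_i$ in terms of the broken basic idempotents and confirming that the commutation with $\pi_{+}$ remains valid, but this is a finite verification using only the symmetric-group relations and the cubical identities.
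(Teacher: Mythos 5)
Your proposal follows exactly the paper's own (two-line) proof: break at the negative projections layer by layer, i.e.\ at the sequences $\mathbf{S}_i=(\pi_{-i},\dots,\pi_{-i})$ for $i=1,\dots,n-1$, then invoke Theorem \ref{T:Br} for the projective $P$ and Lemma \ref{L:Meq} to pass to $\D{A}$, with the loops $t_i$ descending because they normalise the broken idempotents. Your write-up is in fact more detailed than the paper's sketch (which simply says the argument is ``similar to that of Proposition \ref{P:CDK}''), and the extra care you take with the surviving relations and the Pascal-matrix bookkeeping is consistent with Example \ref{Ex:cube3}.
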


\begin{proof} The proof is similar to that of Proposition \ref{P:CDK}. We consider the breakings at sequences $S_{n-1},\dots,S_2,S_1$, where $\mathbf{S}_i=(\pi_{-i},\dots,\pi_{-i},\pi_{-i})$ from the leftmost to the rightmost.
\end{proof}


\section{Orthogonal Projections} \label{S:OP}

In this section, we take our base ring $k$ to be a field. We believe that many key results have analogs for $k=\B{Z}$, but one has to work much harder. For our purpose, we do not need results in that generality.
Let $Q$ be a finite quiver possibly with oriented cycles.
By abuse of notation, we write $\Mod(Q)$ for the category of finitely generated left modules of $kQ$.

For two $kQ$-modules $M$ and $N$, $M$ is said to be left {\em orthogonal} to $N$ denoted by $M\perp N$ if $\Hom_Q^\bullet(M,N):=\Hom_Q(M,N)\oplus\Ext_Q(M,N)=0$. In this case we also say that $N$ is right orthogonal to $M$. Let $\D{C}$ be a collection of modules. The right {\em orthogonal category} $\D{C}^\perp$ is the abelian subcategory $\{N\in\Mod(Q)\mid M\perp N, \forall M\in\D{C}\}$. Let $\innerprod{\D{C}}$ be the abelian subcategory generated by $\D{C}$. It is easy to verify that $\D{C}^\perp=\innerprod{\D{C}}^\perp$.

Suppose $E\in\Mod(Q)$ is exceptional, i.e., $\Hom_Q^\bullet(E,E)$ is $1$-dimensional generated by the identity morphism, so the dimension vector of $E$ corresponds to a {\em real Schur root} \cite[1]{S2} $\epsilon$. Moreover, we assume that $E$ is {\em right and left Hom-finite}, i.e. $\Hom_{Q}^\bullet(E,X)$ and $\Hom_{Q}^\bullet(X,E)$ are finite dimensional for all $X\in\Mod(Q)$.
We specialize some general results in \cite{GL} (see also \cite{S1}) to the quiver case.

\begin{lemma} \cite[Proposition 3.2, 3.5]{GL} \label{L:OP} $E^\perp$ is a reflective subcategory of $\Mod(Q)$, i.e.,
there is a functor $\tilde{\pi}_E:\Mod(Q)\to E^\perp$ left adjoint to the inclusion functor
$\iota_E:E^\perp\to\Mod(Q)$. In particular, $\tilde{\pi}_E$ is right exact and compatible with projective
presentations.
\end{lemma}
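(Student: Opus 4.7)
The plan is to construct $\tilde{\pi}_E$ explicitly for each $X \in \Mod(Q)$ via a two-step universal killing procedure, and then derive the remaining assertions from properties of the construction. First, since $E$ is right Hom-finite, the space $V:=\Ext_Q(E,X)$ is finite-dimensional, so one forms the universal extension
\[
0 \to X \to X_1 \to E \otimes V \to 0
\]
classified by the identity element in $V^*\otimes V = \Ext_Q(E\otimes V,X)$. Applying $\Hom_Q(E,-)$ and using $\Hom_Q(E,E)=k$ together with $\Ext_Q(E,E)=0$, the connecting map becomes the identity on $V$, so $\Ext_Q(E,X_1)=0$. Second, let $W:=\Hom_Q(E,X_1)$, which is again finite-dimensional, and set
\[
\tilde{\pi}_E(X) := \Coker\bigl(E\otimes W \xrightarrow{\;\mathrm{ev}\;} X_1\bigr).
\]

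To see $\tilde{\pi}_E(X)\in E^\perp$, split the evaluation into $0\to\Ker(\mathrm{ev})\to E\otimes W\to K \to 0$ and $0\to K\to X_1\to\tilde{\pi}_E(X)\to 0$, apply $\Hom_Q(E,-)$ to both, and use that $kQ$ is hereditary (so $\Ext^{\geq 2}_Q=0$) together with $\Ext_Q(E,E)=0$ to deduce first $\Ext_Q(E,K)=0$ and then $\Ext_Q(E,\tilde{\pi}_E(X))=0$; the vanishing of $\Hom_Q(E,\tilde{\pi}_E(X))$ follows because $\mathrm{ev}$ was designed so that $\Hom_Q(E,K)\twoheadrightarrow\Hom_Q(E,X_1)=W$. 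The adjunction $\Hom_Q(\tilde{\pi}_E(X),F)\cong\Hom_Q(X,F)$ for $F\in E^\perp$ is then obtained by applying $\Hom_Q(-,F)$ to the two displayed short exact sequences and invoking $\Hom_Q(E,F)=\Ext_Q(E,F)=0$; functoriality and naturality of $\tilde{\pi}_E$ follow from the universal properties of the two constructions.

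Right exactness is automatic, since any left adjoint preserves cokernels. For compatibility with projective presentations, observe that for $P\in\Mod(Q)$ projective the functor $\Hom_{E^\perp}(\tilde{\pi}_E(P),-)=\Hom_Q(P,\iota_E(-))$ is exact on $E^\perp$, so $\tilde{\pi}_E(P)$ is projective in $E^\perp$; applying the right-exact $\tilde{\pi}_E$ to a presentation $P_1\to P_0\to X\to 0$ yields a projective presentation of $\tilde{\pi}_E(X)$. The main technical obstacle is verifying that Step~2 actually lands in $E^\perp$: a priori, quotienting by an evaluation map could reintroduce $\Ext^1$ classes, and ruling this out depends sensitively on the hereditariness of $kQ$ and on the rigidity $\Ext_Q(E,E)=0$ of the exceptional object.
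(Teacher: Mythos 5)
Your construction is exactly the one the paper recalls (from Geigle--Lenzing): first the universal extension killing $\Ext_Q(E,-)$, then the quotient by the universal homomorphism killing $\Hom_Q(E,-)$, with the adjunction read off from the resulting short exact sequences. The argument is correct and simply fills in the details of the paper's sketch, including the hereditary/rigidity input needed to see that the second step stays in $E^\perp$.
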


\begin{proof}[Sketch of proof.] It is useful to recall the construction in \cite{GL}, which is depicted by the following diagram. The row is the {\em universal extension} and the column is the {\em universal homomorphism}.
$$\xymatrix{
& E^h \ar[d] & \\
M \ar@{^{(}->}[r] \ar[dr]_{r_M} & M' \ar@{->>}[d] \ar@{->>}[r] & E^e\\
& \tilde{\pi}_E(M) &
}$$
Here, the universal extension means the extension universal with respect to the property that the connecting morphism $\Hom_Q(E,E^e)\xrightarrow{\delta}\Ext_Q(E,M)$ in the long exact sequence is an isomorphism. Similarly, we mean by the universal homomorphism.

Alternatively, we can change the order of taking the universal extension and the universal homomorphism, but this leads to the same construction.
$$\xymatrix{
E^h \ar[r] & M \ar[dr]_{r_M} \ar@{->>}[r] & M'' \ar@{_{(}->}[d] \\
& & \tilde{\pi}_E(M) \ar@{->>}[d] \\
& & E^e
}$$
Note that the composition $r_M$ is the universal homomorphism from $M$ to an object of $E^\perp$.
\end{proof}

Let $F$ be another exceptional object. In general, $\tilde{\pi}_F(E)$ may not be exceptional, or even indecomposable. However, we have Theorem \ref{T:rel} due to the following lemma.

\begin{lemma} \label{L:epimono} \cite[Lemma VIII.3.3]{ASS} If $M$ and $N$ are indecomposable modules such that $\Ext_Q(M,N)=0$, then any nonzero homomorphism from $N$ to $M$ is either a monomorphism or an epimorphism.
\end{lemma}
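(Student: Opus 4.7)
The plan is to argue by contradiction. Suppose $f\colon N\to M$ is nonzero yet neither injective nor surjective; set $K:=\Ker f$, $I:=\Img f$, $C:=\Coker f$, all three nonzero, and factor $f$ as $N\xrightarrow{g}I\xrightarrow{h}M$. This produces two short exact sequences
$$\eta\colon 0\to K\to N\xrightarrow{g}I\to 0,\qquad \mu\colon 0\to I\xrightarrow{h}M\to C\to 0,$$
and the indecomposability of $N$ (resp.\ $M$) combined with $K\neq 0$ (resp.\ $C\neq 0$) forces both sequences to be non-split.

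Next I would exploit $\Ext_Q(M,N)=0$ together with the hereditariness of $kQ$ (so $\Ext^2$ vanishes identically) to extract, from the long exact $\Ext$ sequences of $\eta$ and $\mu$, two surjections: applying $\Hom_Q(-,N)$ to $\mu$ yields $\Hom_Q(I,N)\twoheadrightarrow\Ext_Q(C,N)$, while applying $\Hom_Q(C,-)$ to $\eta$ yields $\Ext_Q(C,N)\twoheadrightarrow\Ext_Q(C,I)$. Composing these surjections and invoking the naturality identity $\delta_\mu\circ g_*=g_*\circ\delta_\mu^N$ (both sides sending $\beta\colon I\to N$ to $(g\beta)_*[\mu]$), every element of $\Ext_Q(C,I)$ arises as $(g\beta)_*[\mu]$ for some $\beta$. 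Since $\ker\bigl(\delta_\mu\colon\Hom_Q(I,I)\to\Ext_Q(C,I)\bigr)$ equals the image of $h^*\colon\Hom_Q(M,I)\to\Hom_Q(I,I)$, $\gamma\mapsto\gamma h$, this forces the key decomposition: there exist $\beta\colon I\to N$ and $\gamma\colon M\to I$ with
$$\operatorname{id}_I=g\beta+\gamma h.$$

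Finally I would derive the contradiction. Set $\phi':=\operatorname{id}_M-h\gamma\in\End_Q(M)$. Because $h\colon I\hookrightarrow M$ admits no retraction (else $\mu$ would split), $h\gamma$ cannot be invertible in the local ring $\End_Q(M)$, so it lies in the Jacobson radical and $\phi'$ is an automorphism. A direct computation using $\operatorname{id}_I-\gamma h=g\beta$ shows $\phi'\circ h=h\circ(g\beta)$ and $\pi\circ\phi'=\pi$ (where $\pi\colon M\twoheadrightarrow C$), so $(g\beta,\phi',\operatorname{id}_C)$ is a self-morphism of the exact sequence $\mu$; the Five Lemma then forces $g\beta\colon I\to I$ to be an isomorphism. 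But then $\beta\circ(g\beta)^{-1}\colon I\to N$ is a section of $g$, contradicting the non-splitting of $\eta$.

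The main obstacle is securing the identity decomposition $\operatorname{id}_I=g\beta+\gamma h$, which requires carefully aligning the long exact $\Ext$ sequences of $\eta$ and $\mu$ through the naturality of the connecting homomorphisms; the hereditariness of $kQ$ enters precisely in the step guaranteeing that $g_*\colon\Ext_Q(C,N)\to\Ext_Q(C,I)$ is surjective. Once this decomposition is in hand, the rest is a routine local-ring plus Five-Lemma argument.
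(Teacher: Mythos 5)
Your argument is correct: it is precisely the standard Happel--Ringel proof of this fact, and the paper itself offers no proof beyond the citation to \cite[Lemma VIII.3.3]{ASS}, which is where this argument appears. The only cosmetic wrinkle is the justification that $h\gamma$ is not invertible --- the most direct reason is that $h\gamma$ factors through the proper submodule $I\subsetneq M$ and so cannot be surjective (your route via ``no retraction'' also works, since $h\gamma$ invertible would force $h$ to be a split epi, hence an isomorphism) --- and everything else, including the use of hereditariness to get $g_*\colon\Ext_Q(C,N)\twoheadrightarrow\Ext_Q(C,I)$ and of the local endomorphism ring of the indecomposable $M$, is exactly as in the cited source.
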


\begin{definition} If $\tilde{\pi}_E(M)=0$, then the universal homomorphism $E^h\to M$ is surjective, and we define $\tilde{\varpi}_E(M)$ to be the kernel of $E^h\twoheadrightarrow M$. Otherwise we define $\tilde{\varpi}_E(M)=\tilde{\pi}_E(M)$. Note that $E\perp \tilde{\varpi}_E(M)$.
\end{definition}

\begin{theorem} \label{T:rel} If $E\perp F$, then $E'=\tilde{\varpi}_F(E)$ is exceptional, and $\innerprod{F,E'}$ generates the same abelian subcategory as $\innerprod{E,F}$. Moreover, we have that
\begin{equation} \label{eq:rel1} \tilde{\pi}_{F}\tilde{\pi}_E=\tilde{\pi}_{E'}\tilde{\pi}_{F},\quad \iota_E \iota_{F} = \iota_{F}\iota_{E'},\quad \text{and}\quad \tilde{\pi}_{F}\iota_{E}=\iota_{E'}\tilde{\pi}_{F}.\end{equation}
\end{theorem}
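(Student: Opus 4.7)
The plan is to prove the theorem in three stages: exceptionality of $E'$, the generation equality $\langle E, F \rangle = \langle E', F \rangle$, and the three functorial identities. The main engine throughout will be chasing $\Hom/\Ext$ long exact sequences, exploiting the orthogonalities $E \perp F$ and $E' \in F^\perp$.

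For exceptionality I split according to the two branches in the definition of $\tilde{\varpi}_F$. In the easier branch $\tilde{\pi}_F(E) = 0$, the universal extension must be trivial (so $\Ext_Q(F,E) = 0$) and the universal homomorphism becomes a surjection $F^h \twoheadrightarrow E$, giving the short exact sequence $0 \to E' \to F^h \to E \to 0$. Applying $\Hom_Q(E,-)$ and using $E \perp F$ to kill the $F^h$-terms yields $\Ext_Q(E, E') \cong \End_Q(E) = k$ and $\Hom_Q(E, E') = 0$. Dually, applying $\Hom_Q(-, E')$ and using $E' \in F^\perp$ to kill the $F^h$-terms yields $\End_Q(E') \cong \Ext_Q(E, E') = k$ and $\Ext_Q(E', E') = 0$, so $E'$ is exceptional.

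In the branch $\tilde{\pi}_F(E) \neq 0$, I use the two-step construction, consisting of the universal extension $0 \to E \to E_1 \to F^e \to 0$ followed by the universal homomorphism $F^h \to E_1 \twoheadrightarrow E'$. The key preliminary observation is that the connecting isomorphism $\delta \colon \Hom_Q(F, F^e) \xrightarrow{\sim} \Ext_Q(F, E)$ forces every morphism $F \to E_1$ to compose to zero with $E_1 \to F^e$, so the universal homomorphism factors as $F^h \to E \hookrightarrow E_1$, and its image $I$ is a subobject of $E$. This yields two SES's $0 \to I \to E \to E'' \to 0$ and $0 \to E'' \to E' \to F^e \to 0$. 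A $\Hom/\Ext$ chase on the auxiliary sequence $0 \to J \to F^h \to I \to 0$, using $E \perp F$, gives $\Ext_Q(E, I) = 0$; and $\Hom_Q(E, I) \hookrightarrow \End_Q(E) = k$ must vanish, since $\Hom_Q(E, I) = k$ would imply $I = E$ and collapse $E'$ onto $F^e$, contradicting $E' \in F^\perp$. Propagating through the two SES's then gives $\Hom_Q(E, E') = k$ and $\Ext_Q(E, E') = 0$. A final chase with $\Hom_Q(-, E')$, combined with $\Hom_Q(I, E') \hookrightarrow \Hom_Q(F^h, E') = 0$, concludes that $\End_Q(E') = k$ and $\Ext_Q(E', E') = 0$.

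For the generation stage, in both cases the same SES's exhibit $E' \in \langle E, F \rangle$ and, read backwards, exhibit $E \in \langle E', F \rangle$, whence $\langle E, F \rangle = \langle E', F \rangle$ and consequently $E^\perp \cap F^\perp = (E')^\perp \cap F^\perp$. For the identities, I first verify that $\tilde{\pi}_F$ carries $E^\perp$ into $E^\perp \cap F^\perp$ by a $\Hom/\Ext$ chase parallel to those above. Then $\tilde{\pi}_F \tilde{\pi}_E$ and $\tilde{\pi}_{E'} \tilde{\pi}_F$ are both left adjoints to the same inclusion $E^\perp \cap F^\perp \hookrightarrow \Mod(Q)$, hence coincide by uniqueness; $\iota_E \iota_F = \iota_F \iota_{E'}$ just records that the inclusion of the common subcategory factors through either $E^\perp$ or $F^\perp$; and $\tilde{\pi}_F \iota_E = \iota_{E'} \tilde{\pi}_F$ follows once one knows that $\tilde{\pi}_F$, restricted to $E^\perp$, lands in $(E')^\perp$. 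The chief obstacle will be the case $\tilde{\pi}_F(E) \neq 0$: one must simultaneously manage the auxiliary subobject $I \subset E$, exclude the degenerate possibility $I = E$ using $E' \in F^\perp$, and propagate $\Hom/\Ext$ vanishing through two stacked short exact sequences without creating a circular argument about $\End_Q(E')$ itself.
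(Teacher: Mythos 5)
Your route is essentially the one the paper intends. For the three identities the paper argues exactly as you do: it notes that $(F,E')^\perp=(E,F)^\perp$, reduces $\tilde{\pi}_{F}\tilde{\pi}_E=\tilde{\pi}_{E'}\tilde{\pi}_{F}$ via Yoneda to $\Hom_Q(\tilde{\pi}_{F}\tilde{\pi}_E(M),N)=\Hom_Q(M,N)=\Hom_Q(\tilde{\pi}_{E'}\tilde{\pi}_{F}(M),N)$ for $N\in(E,F)^\perp$ (your ``two left adjoints of the same inclusion'' is the same argument), records $\iota_E\iota_F=\iota_F\iota_{E'}$ as trivial, and treats the mixed identity the same way. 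For the first statement the paper gives no details at all --- it defers to ``playing homological algebra in the construction of Lemma \ref{L:OP} with the help of Lemma \ref{L:epimono}'' --- so your $\Hom/\Ext$ chases are a genuine filling-in, and the exceptionality part of them is sound; in particular your exclusion of $I=E$ via $\End_Q(E)=k$ and $E'\in F^\perp$ is correct.

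There is, however, one step that does not go through as written: the reverse inclusion $E\in\innerprod{E',F}$ in the branch where the universal homomorphism has image $I$ with $0\neq I\subsetneq E$. Reading your sequences backwards yields $E''=\Ker(E'\to F^e)\in\innerprod{E',F}$, but to recover $E$ as an extension of $E''$ by $I$ you must first place $I$ in $\innerprod{E',F}$, and your data present $I$ only as a subobject of $E$ and as a quotient of $F^h$ via a map whose codomain is $E$ itself --- both circular. This is precisely where Lemma \ref{L:epimono} (which you never invoke, though the paper flags it as essential) enters: it yields the standard dichotomy that every nonzero $F\to E$ is mono or epi, whence the universal map $F^h\to E$ is either surjective (your first branch) or injective. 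In the injective case $I\cong F^h$, the sequence becomes $0\to F^h\to E\to E'\to 0$ (one also finds via Happel--Ringel that at most one of $\Hom_Q(F,E)$, $\Ext_Q(F,E)$ is nonzero, so your genuinely two-step case never occurs), and the reverse inclusion is immediate. Alternatively one can prove only $(E,F)^\perp=(F,E')^\perp$, which your sequences do give, and invoke the double-perpendicular property of subcategories generated by exceptional sequences. Either patch is routine, but as stated the ``read backwards'' step is a gap.
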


\begin{proof} We believe that the first statement is well-known. With the help of Lemma \ref{L:epimono}, it can be proved by playing homological algebra in the construction of Lemma \ref{L:OP}.

For the last statement, we observe that the codomain of $\tilde{\pi}_{E'}\tilde{\pi}_{F}$ is $(F,E')^\perp$. But $(F,E')^\perp=(E,F)^\perp$, which is the codomain of $\tilde{\pi}_{F}\tilde{\pi}_{E}$. Now according to Yoneda embedding, to show that $\tilde{\pi}_{F}\tilde{\pi}_E=\tilde{\pi}_{E'}\tilde{\pi}_{F}$, it is enough to show that $\Hom_Q(\tilde{\pi}_{F}\tilde{\pi}_E(M),N)=\Hom_Q(\tilde{\pi}_{E'}\tilde{\pi}_{F}(M),N)$ for any $N$ in $(E,F)^\perp$. But both sides equal to $\Hom_Q(M,N)$ due to the adjunction.

The equality $\iota_E \iota_{F} = \iota_{F}\iota_{E'}$ is trivial. $\iota_{E'}\tilde{\pi}_{F}$ considered as a functor defined on $E^\perp$, has the same codomain as $\tilde{\pi}_{F}\iota_{E}$. Similar argument as before shows that $\tilde{\pi}_{F}\iota_{E}=\iota_{E'}\tilde{\pi}_{F}$.

$$\vcenter{\xymatrix@R=2ex@C=16ex{
&  E^\perp  \ar@{_<-^>}[dr]^{\tilde{\pi}_F}_{\iota_F}  \\
\Mod(Q)	\ar@{_<-^>}[ur]^{\tilde{\pi}_E}_{\iota_E}  \ar@{^<-_>}[dr]_{\tilde{\pi}_F}^{\iota_F} & & (E,F)^\perp 	 \\
&  F^\perp  \ar@{^<-_>}[ur]_{\tilde{\pi}_{E'}}^{\iota_{E'}}  	}
} $$
\end{proof}

\begin{definition} The relations in \eqref{eq:rel1} are called {\em fundamental relations of first kind}. The relations of form $\tilde{\pi}_E\iota_F=\sigma$ for some equivalence $\sigma$, is called {\em fundamental relations of second kind}.
\end{definition}

\begin{definition} An {\em exceptional collection} is a set $\B{E}$ of exceptional objects such that for any $E,F\in\B{E}$ we have either $E\perp F$ or $F\perp E$.
\end{definition}

Any {\em exceptional sequence} is an exceptional collection.

\begin{definition} The {\em basic set} $b(M)$ of a module $M$ consists of all the non-isomorphic direct summands of $M$. We call $M$ {\em basic} if its direct summands are all pairwise non-isomorphic.

A $kQ$-module $T$ is called {\em partial tilting} if $\Ext_Q(T,T)=0$. A partial tilting module is called {\em tilting} if its direct summands generate the category $\Mod(Q)$.
This is equivalent to say that the cardinality of the basic set of $T$ is equal to the number of vertices $|Q_V|$ \cite[Proposition VI.4.4]{ASS}.
\end{definition}

\begin{remark} \label{R:proj} If $P_v$ is the indecomposable projective module corresponding to a vertex $v$, then
applying $\tilde{\pi}_E$ to $P_v$ is particularly simple. If $E$ is not projective, then $\Hom_Q(E,P_v)=0$; otherwise $\Ext_Q(E,P_v)=0$.
Clearly, $\tilde{\pi}_E(kQ)=\bigoplus_{v\in Q_V}\tilde{\pi}_E(P_v)$ is partial tilting. Applying $\Hom_Q(-,E)$ to the construction of Lemma \ref{L:OP}, we see that $\Ext_Q(\tilde{\pi}_E(kQ),E)=0$, so $\tilde{\pi}_E(kQ)\oplus E$ is a tilting module for $\Mod(Q)$. Let $P^b$ be a direct sum of elements in $b(\tilde{\pi}_E(kQ))$, then the quiver of $\End_Q(P^b)$ has $|Q_V|-1$ vertices.
\end{remark}

\begin{corollary} \cite[Theorem 2.3]{S1} \label{C:OC}
The orthogonal category $E^\perp$ is equivalent to representations of a quiver $Q_E$ with one vertex less than $Q$. The inverse functor $\iota_E: \Mod(Q_E)\to E^\perp$ is a fully faithful exact embedding into $\Mod(Q)$.
\end{corollary}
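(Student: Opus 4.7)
The plan is to realize $E^\perp$ as the module category over the endomorphism ring of the compact projective generator $P^b$ identified in Remark \ref{R:proj}, then appeal to Morita's theorem.

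I would begin by verifying that $E^\perp$ is closed under subobjects and quotients in $\Mod(Q)$, so that the inclusion $E^\perp \hookrightarrow \Mod(Q)$ is exact with kernels and cokernels inherited from $\Mod(Q)$. Given a short exact sequence $0 \to A \to B \to C \to 0$ in $\Mod(Q)$ with $B \in E^\perp$, the long exact sequence for $\Hom_Q(E,-)$ immediately yields $\Hom_Q(E, A) = \Hom_Q(E, C) = 0$, and the hereditariness of $kQ$ (global dimension at most $1$) together with the same long exact sequence forces $\Ext_Q(E, A) = \Ext_Q(E, C) = 0$. Hence $E^\perp$ is an exactly embedded abelian subcategory.

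Next I would show that $P^b$ is a projective generator of $E^\perp$. For generation: any $N \in E^\perp$ admits a surjection $(kQ)^n \twoheadrightarrow N$ in $\Mod(Q)$; applying the right-exact functor $\tilde{\pi}_E$ of Lemma \ref{L:OP} and using $\tilde{\pi}_E(N) = N$ (the reflection property, i.e. the adjunction unit being an isomorphism on $E^\perp$) yields a surjection from $\tilde{\pi}_E(kQ)^n \in \operatorname{add}(P^b)$ onto $N$, and hence from some power of $P^b$. For projectivity: the adjunction gives $\Hom_Q(P^b, N) = \Hom_Q(\tilde{\pi}_E(kQ), N) \cong \Hom_Q(kQ, N)$ for $N \in E^\perp$, which is exact in $N$ by the exactness of the inclusion established above. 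Classical Morita theory then produces the equivalence $\Hom_Q(P^b, -) : E^\perp \xrightarrow{\sim} \Mod(B)$ where $B = \End_Q(P^b)^{\op}$; by Remark \ref{R:proj} the quiver $Q_E$ of $B$ has exactly $|Q_V|-1$ vertices (the indecomposable summand $E$ of the tilting module $T \oplus E$ is dropped in passing to $P^b$). The inverse equivalence $\Mod(Q_E) \to E^\perp$, post-composed with the inclusion into $\Mod(Q)$, is the claimed $\iota_E$; it is fully faithful as the composite of an equivalence and a full-subcategory inclusion, and exact as the composite of an equivalence and the exact inclusion.

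The main technical obstacle is the projectivity step: one must unpack the adjunction carefully to see that $\Hom_Q(P^b, -)$ on $E^\perp$ factors through the forgetful functor $E^\perp \to \Mod(k)$, and then invoke the exactness of the inclusion from Step 1 to conclude. Once those are in hand, the identification of the quiver's vertex count is provided directly by Remark \ref{R:proj}, and the remainder is routine tilting and Morita theory.
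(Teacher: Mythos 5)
The paper offers no proof of this corollary beyond the citation to Schofield, so your proposal has to stand on its own; it follows the standard Geigle--Lenzing/Schofield route (projective generator $\tilde{\pi}_E(kQ)$ plus Morita theory), but two steps are problematic.

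First, your Step 1 is false as stated: $E^\perp$ is \emph{not} closed under subobjects and quotients. For $0\to A\to B\to C\to 0$ with $B\in E^\perp$, the long exact sequence reads
$0\to\Hom_Q(E,A)\to 0\to\Hom_Q(E,C)\to\Ext_Q(E,A)\to 0\to\Ext_Q(E,C)\to 0$,
so you get $\Hom_Q(E,A)=0$ and $\Ext_Q(E,C)=0$, but $\Hom_Q(E,C)\cong\Ext_Q(E,A)$ and neither need vanish. (Concretely, for $Q=A_2$ and $E=S_1$ one has $E^\perp=\operatorname{add}(P_1)$, and the quotient $S_1$ of $P_1$ is not in $E^\perp$.) The statement you actually need --- that kernels and cokernels of a morphism $f\colon M\to N$ with \emph{both} $M,N\in E^\perp$ agree with those computed in $\Mod(Q)$ --- is true, but requires factoring $f$ through its image and running the long exact sequence twice, using hereditariness of $kQ$; this is Geigle--Lenzing's closure under kernels, cokernels and extensions. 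With that correction, your generator and projectivity arguments for $P^b$ go through.

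Second, and more seriously for the statement being proved, you never show that $\End_Q(P^b)$ is hereditary. Morita theory alone only yields $E^\perp\simeq\Mod(B)$ for some algebra $B$ with $|Q_V|-1$ simples, i.e.\ representations of a quiver \emph{with relations}; the content of the corollary is that $Q_E$ carries no relations. One must additionally prove $\Ext^2_{E^\perp}=0$, e.g.\ by splicing any $2$-extension in $E^\perp$ through its middle image $Z$ (which lies in $E^\perp$ by the corrected closure properties) and using surjectivity of $\Ext^1_Q(N,Y)\to\Ext^1_Q(Z,Y)$ for $Z\hookrightarrow N$ (hereditariness of $kQ$) together with closure of $E^\perp$ under extensions to trivialize the class inside $E^\perp$. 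Without this step the proof does not deliver the assertion that $E^\perp$ is equivalent to representations of a quiver.
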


\begin{definition} An exceptional representation $E$ of $Q$ is called {\em connected} if the quiver $Q_E$ is connected.
\end{definition}

We call the functor $\tilde{\pi}_E$ in Lemma \ref{L:OP} the (right) {\em orthogonal projection through} $E$ and its composition with the equivalence in Lemma \ref{C:OC} the (right) orthogonal projection {\em to} $Q_E$, denoted by $\pi_E:\Mod(Q)\to\Mod(Q_E)$. Sometimes we do not distinguish $E^\perp$ and $\Mod(Q_E)$ if no confusion is possible. We should understand that $\pi_E$ is determined only up to automorphisms of $\Mod(Q_E)$.

Let us briefly review the $k$-linear automorphism group $\Pic_k(Q)$ of $\Mod(Q)$. Here, $k$-linear means that those automorphisms induce homomorphisms of $k$-modules between the $\Hom$ groups. It is known that $\Pic_k(Q)\cong \Aut_k(Q)/\Inn_k(Q)$, where $\Aut_k(Q)$ is the $k$-algebra automorphism group of $kQ$, and $\Inn_k(Q)$ is the normal subgroup of inner automorphisms. The latter is nothing but the torus $T_{in} =(k^*)^{Q_V}/k^*$ with $k^*$ embedded multi-diagonally.
Let $\Aut_0(Q)$ be the subgroup of $\Aut_k(Q)$ containing vertex permutations and $\Aut_1(Q)$ be the group $\prod_{u,v\in Q_V}\GL(a_{uv})$ acting naturally on the space of arrows from $u$ to $v$, then $T_{in}$ is also contained in $\Aut_1(Q)$.
Clearly we have that
$$\Aut_k(Q)=\Aut_0(Q)\times \Aut_1(Q).$$
We also remark that any exceptional $E$ is fixed by $\Aut_1(Q)$. This is because $\Aut_1(Q)$ also acts on the representation spaces $\Rep_\epsilon(Q)$ and $E$ is {\em rigid} there. The automorphisms that we mainly consider are in the finite subgroup $\Aut_0(Q)\times N$ of $\Pic_k(Q)$, where $N$ is the Weyl group of $\Aut_1(Q)$ permuting arrows.

Everything above has a dual statement for the left orthogonal category $^\perp E$ and left orthogonal projection $_E\tilde{\pi}$ and $_E\pi$.
Let $\tau$ be the classical AR-transformation \cite[IV.2]{ASS} on $\Mod(Q)$. If $E$ is not projective, then $E^\perp=^\perp\tau E$ by the AR-duality \cite[Theorem IV.2.13]{ASS}. We define the dual right orthogonal projection $\tilde{\pi}_E^\vee:=_{\tau E}\tilde{\pi}$, the left orthogonal projection through $\tau E$. If $E=P$ is projective, then $P^\perp=^\perp \nu P$, where $\nu$ is the Nakayama functor \cite[Definition III.2.8]{ASS}. We define the dual right orthogonal projection $\tilde{\pi}_P^\vee:=_{\nu P}\tilde{\pi}$.

Let $I=\oplus_{v\in Q_V}I_v=kQ^{\op}$, then the dual of Lemma \ref{L:OP} implies that $b(\tilde{\pi}_E^\vee(I))$ contains all the indecomposable injective module in $E^\perp$. We denote $I^b$ the direct sum of all elements in $b(\tilde{\pi}_E^\vee(I))$.
By Eilenberg-Watts theorem, an adjoint pair between module categories must be representable by a bimodule.
The next lemma says that the bimodule for $\tilde{\pi}_E$ is explicitly given by $\tilde{\pi}_E^\vee(I)$.

\begin{lemma} \label{L:TD} \cite[Lemma 2.5]{F1} $\tilde{\pi}_E(M)=\Hom_Q(-,\tilde{\pi}_E^\vee(I))^*$, so $\pi_E(M)=\Hom_Q(-,I^b)^*$.
\end{lemma}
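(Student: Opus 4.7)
The plan is to exhibit $Y := \tilde{\pi}_E^\vee(I)$ as the injective cogenerator of $E^\perp$ and then extract the formula from the adjunction $\tilde{\pi}_E \dashv \iota_E$. The starting observation is the tautology $\Hom_Q(M, I)^* = M$ on $\Mod(Q)$, which follows from the fact recorded in the Notations that $\Hom_Q(N, I_v)^* = Ne_v$ for the indecomposable injectives. I want to transport this identity across $\tilde{\pi}_E$ using $Y$ in place of $I$.

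First I would verify that $Y \in E^\perp$. When $E$ is not projective, $\tilde{\pi}_E^\vee = {_{\tau E}\tilde{\pi}}$ by definition, so its image lies in ${^\perp \tau E}$, which equals $E^\perp$ by the AR-duality cited just before the lemma. The projective case $E = P$ is handled identically, with $\nu P$ replacing $\tau E$.

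Next I would match the indecomposable summands of $Y$ with the indecomposable injective objects of $E^\perp$. Applying the dual of Lemma \ref{L:OP} to each $I_v$: either $I_v$ already lies in $E^\perp$ (and is preserved), or the dualized universal construction produces the object of $E^\perp$ characterized by a universal map \emph{into} $I_v$, which is exactly the injective hull of the image of $I_v$ inside $E^\perp$. In either case, under the equivalence $E^\perp \simeq \Mod(Q_E)$ the indecomposable summands of $Y$ are precisely the indecomposable injectives of $\Mod(Q_E)$, so $Y$ agrees with $I^b$ from the remark preceding the lemma.

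Finally I would assemble the formula. By the adjunction, for any $M \in \Mod(Q)$,
\[ \Hom_Q(M, Y) \;=\; \Hom_Q(M, \iota_E I^b) \;\cong\; \Hom_{E^\perp}(\tilde{\pi}_E(M), I^b), \]
and dualizing componentwise using $\Hom_{Q_E}(N, I_{v'})^* = Ne_{v'}$ on $\Mod(Q_E)$ gives $\Hom_{E^\perp}(\tilde{\pi}_E(M), I^b)^* \cong \pi_E(M)$, which is $\tilde{\pi}_E(M)$ under the equivalence. The main obstacle I anticipate is the second step: the remark preceding the lemma only asserts that $b(Y)$ \emph{contains} every indecomposable injective of $E^\perp$, so upgrading ``contains'' to ``equals'' requires carefully invoking the dual universal property to rule out extra summands, and doing so uniformly for the non-projective case (via $\tau$) and the projective case (via $\nu$).
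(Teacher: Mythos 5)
The paper gives no internal proof of this lemma --- it simply cites \cite[Lemma 2.5]{F1} --- so your proposal can only be judged on its own terms, and it has one genuine gap in the final step: you silently replace $Y=\tilde{\pi}_E^\vee(I)$ by $\iota_E I^b$. These are not isomorphic. They have the same basic set, but $\tilde{\pi}_E^\vee(I)$ carries multiplicities, as Lemma \ref{L:sproj} already shows: $\tilde{\pi}_i^\vee(I_i)=\bigoplus_k a_{ik}I_k$. Concretely, for the Kronecker quiver $1\rightrightarrows 2$ and $E=S_1$ one has $\tilde{\pi}_1^\vee(I)=3I_2$ while $I^b=I_2$; here $\tilde{\pi}_1(M)\cong(M_2^{\oplus 2}\rightrightarrows M_2)$ has total dimension $3\dim M_2$, which matches $\Hom_Q(M,3I_2)^*$ but not $\Hom_Q(M,I_2)^*\cong M_2$. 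So your adjunction argument does establish the second formula, $\pi_E(M)=\Hom_Q(M,I^b)^*$, but not the first: the entire point of $\tilde{\pi}_E(M)=\Hom_Q(M,\tilde{\pi}_E^\vee(I))^*$ is that the multiplicities, together with the residual right $kQ$-module structure on $\tilde{\pi}_E^\vee(I)$ inherited from $I=kQ^{\op}$, reconstitute $\tilde{\pi}_E(M)$ as an object of $\Mod(Q)$ rather than of $\Mod(Q_E)$. Recovering that formula needs the mechanism the paper's preceding remark points to --- Eilenberg--Watts, $\tilde{\pi}_E\cong\tilde{\pi}_E(kQ)\otimes_{kQ}-$ together with the bimodule identification $\tilde{\pi}_E(kQ)^*\cong\tilde{\pi}_E^\vee(I)$ --- or, equivalently, a vertexwise computation of $e_v\tilde{\pi}_E(M)=\Hom_Q(\tilde{\pi}_E(P_v),\tilde{\pi}_E(M))$; neither is supplied by your argument.

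Two remarks on your second step, which you rightly flag as delicate but argue imprecisely. The summand $\tilde{\pi}_E^\vee(I_v)$ is in general decomposable (see the example above), so it is not ``the injective hull of the image of $I_v$.'' The clean upgrade from ``contains'' to ``equals'' is: $\tilde{\pi}_E^\vee$ is right adjoint to the exact embedding $\iota_E$, hence preserves injectives, so \emph{every} indecomposable summand of $\tilde{\pi}_E^\vee(I)$ is an injective object of $E^\perp$; combined either with the containment asserted before the lemma or with the count $|b(\tilde{\pi}_E^\vee(I))|=|Q_V|-1$ from the dual of Remark \ref{R:proj}, this forces $b(\tilde{\pi}_E^\vee(I))$ to be exactly the set of indecomposable injectives of $E^\perp$. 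With that repair, your derivation of the second displayed formula is correct; the first remains unproved.
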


We can naturally order $b(\tilde{\pi}_E^\vee(I))$ by their $\delta$-vectors in $K(\Inj\-kQ)$ \cite{DF}. This provides us an {\em canonical ordering} on the vertices of the new quiver $Q_E$. The functor $\pi_E$ can be readily described using the injective resolutions of each one in $b(\tilde{\pi}_E^\vee(I))$. For example,

\begin{lemma} \label{L:sproj} For a non-projective simple $S_i$, we have that
$$\tilde{\pi}_i^\vee(I_j)=\begin{cases}\bigoplus_{k\in Q_V} a_{ik}I_k & \text{ if  } i=j, \\ I_j & \text{ otherwise,}\end{cases}$$
where $a_{ik}=\ext_Q(S_i,S_k)$ is the number of arrows from $i$ to $k$.
When the simple $S_i$ is projective,  $\tilde{\pi}_i^\vee(I_j)=I_j$ if $i\neq j$, otherwise equals zeros.
In particular, $\pi_i$ is an exact functor. In fact, $\pi_E$ is exact if and only if $E$ is some $S_i$.
\end{lemma}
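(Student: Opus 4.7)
The plan is to verify the four clauses using the definition of $\tilde\pi_i^\vee={}_{\tau S_i}\tilde\pi$ (replaced by ${}_{\nu S_i}\tilde\pi$ when $S_i$ is projective) as the right adjoint of the inclusion $\iota\colon S_i^\perp\hookrightarrow\Mod(Q)$; the identification $S_i^\perp={}^\perp\tau S_i$ (or ${}^\perp\nu S_i$ in the projective case) was established just above by AR-duality. By Yoneda, $\tilde\pi_i^\vee(N)$ is characterized as the unique object of $S_i^\perp$ representing $M\mapsto\Hom_Q(M,N)$; combined with the standard identity $\Hom_Q(M,I_v)=M_v^*$, this reduces everything to a Yoneda computation inside $S_i^\perp$.

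For $j\ne i$, both subcases are handled uniformly: since $S_i$ is concentrated at vertex $i$, $\Hom_Q(S_i,I_j)=(S_i)_j^*=0$, and $\Ext_Q^1(S_i,I_j)=0$ because $I_j$ is injective. Hence $I_j\in S_i^\perp$, so the right adjoint fixes it and $\tilde\pi_i^\vee(I_j)=I_j$.

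For $j=i$ I need an explicit description of $S_i^\perp$. Applying $\Hom_Q(-,M)$ to the hereditary projective resolution $0\to\bigoplus_{a\colon i\to k}P_k\to P_i\to S_i\to 0$ identifies $\Hom_Q(S_i,M)=\Ker(\oplus_a M_a)$ and $\Ext_Q^1(S_i,M)=\Coker(\oplus_a M_a)$, so $M\in S_i^\perp$ if and only if the evaluation map $M_i\to\bigoplus_k M_k^{\oplus a_{ik}}$ is bijective, where $a_{ii}=0$ because exceptionality of $S_i$ forbids loops at $i$. When $S_i$ is projective the target is zero (vertex $i$ is a sink), so the characterization becomes $M_i=0$. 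In the non-projective subcase, dualizing the bijection yields
\[M_i^*\cong\bigoplus_k(M_k^*)^{\oplus a_{ik}}=\Hom_Q\Bigl(M,\bigoplus_k a_{ik}I_k\Bigr)\]
for every $M\in S_i^\perp$; since each $I_k$ with $a_{ik}>0$ lies in $S_i^\perp$ (as $k\ne i$), Yoneda inside $S_i^\perp$ forces $\tilde\pi_i^\vee(I_i)=\bigoplus_k a_{ik}I_k$. In the projective subcase the same adjunction identity gives $\Hom_Q(M,\tilde\pi_i^\vee(I_i))=M_i^*=0$ identically on $S_i^\perp$, whence $\tilde\pi_i^\vee(I_i)=0$.

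For the exactness assertion, the formulas just proved show that $\tilde\pi_i^\vee(\bigoplus_v I_v)$ is a direct sum of injectives of $\Mod(Q)$; since (by the discussion preceding Lemma~\ref{L:TD}) this sum contains up to isomorphism every indecomposable injective of $S_i^\perp$, $\iota_i$ preserves injectives, and by the standard adjunction criterion its left adjoint $\pi_i$ is exact. For the converse I argue contrapositively: if $E$ is exceptional and non-simple, then $|\operatorname{supp}(E)|\ge 2$, and I would produce a simple subobject $S_j\hookrightarrow\operatorname{soc}(E)$ with $j\notin\operatorname{supp}(\operatorname{top}(E))$, giving $\Hom_Q(E,S_j)=0$, and then use the AR-duality $\Ext_Q^1(E,S_j)\cong D\Hom_Q(S_j,\tau E)$ to check the Ext-vanishing as well, so that $S_j\in E^\perp$. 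Applying $\tilde\pi_E$ to $0\to S_j\to E\to E/S_j\to 0$ then produces $\tilde\pi_E(S_j)=S_j\ne 0$ mapping into $\tilde\pi_E(E)=0$, contradicting left-exactness. The main obstacle is the existence of such a vertex $j$: showing that the socle-support of a non-simple exceptional $E$ is not entirely contained in its top-support is the delicate step, relying on indecomposability of $E$ together with the acyclicity that the exceptional hypothesis imposes on the supporting subquiver.
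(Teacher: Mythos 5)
Your computation of $\tilde{\pi}_i^\vee(I_j)$ and your proof that $\pi_i$ is exact are correct and amount to the paper's argument in different clothing: where you characterize $S_i^\perp$ by the bijectivity of $M_i\to\bigoplus_k M_k^{\oplus a_{ik}}$ and invoke Yoneda, the paper applies the Nakayama functor to the canonical presentation $0\to\bigoplus_{a:i\to k}P_k\to P_i\to S_i\to 0$ to exhibit $0\to\tau S_i\to\bigoplus_k a_{ik}I_k\to I_i\to 0$ directly as the universal extension in the construction of Lemma \ref{L:OP}, and then quotes Lemma \ref{L:TD} ($\pi_E=\Hom_Q(-,I^b)^*$ with $I^b$ injective) for exactness. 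Both routes are sound.

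The converse (``$\pi_E$ exact $\Rightarrow$ $E$ simple'') is where your proposal breaks down, and the step you flag as delicate is in fact false: a non-simple exceptional $E$ need not have any simple subobject lying in $E^\perp$. Take $Q\colon 1\to 3\leftarrow 2$ and $E$ the sincere indecomposable of dimension $(1,1,1)$. Its only simple subobject is $S_3=\operatorname{soc}(E)$, and applying $\Hom_Q(-,S_3)$ to $0\to P_3\to P_1\oplus P_2\to E\to 0$ gives $\Ext_Q(E,S_3)\cong k\neq 0$, so $S_3\notin E^\perp$ (consistently, $\langle\dim E,\dim S_3\rangle=-1$). The Kronecker quiver with $E$ the exceptional of dimension $(2,3)$ gives another instance: $\operatorname{soc}(E)=S_2^3$ but $\Ext_Q(E,S_2)\neq 0$. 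So no argument based on finding such an $S_j$ can work. The paper instead exploits exactness globally: if $\pi_E$ is exact then every $\tilde{\pi}_E^\vee(I_v)$ must be injective, so the universal extensions $0\to\epsilon_v(\tau E)\to\tilde{\pi}_E^\vee(I_v)\to I_v\to 0$ (with $\epsilon_v=\dim E_v$) are injective copresentations of copies of $\tau E$ ending in a single $I_v$; since a non-simple exceptional module is supported at two distinct vertices $i\neq j$, dualizing yields two non-isomorphic projective covers $P_i\twoheadrightarrow\epsilon_iE$ and $P_j\twoheadrightarrow\epsilon_jE$, a contradiction (with a parallel argument via the Nakayama functor when $E$ is projective). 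You would need to replace your last paragraph with an argument of this kind.
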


\begin{proof} If $S_i$ is not projective, $\Hom_Q(I_j,\tau S_i)\cong\Ext_Q(S_i, I_j)$ always vanishes. If $i\neq j$, then $\Ext_Q(I_j,\tau S_i)\cong\Hom_Q(S_i, I_j)$ is zero as well.
We have the canonical resolution of $S_i$: \label{eq:resimp}
\begin{equation}0\to \bigoplus_{a:i\to k} P_k\to P_i\to S_i \to 0.\end{equation}
So $$0\to \tau S_i \to  \bigoplus_{a:i\to k} I_k\to I_i\to 0.$$
Since $\Ext_Q(I_i,\tau S_i)=k$, this sequence is the universal extension in the construction of Lemma \ref{L:OP}, and we conclude that $\tilde{\pi}_i^\vee(I_i)=\bigoplus_{k\in Q_V} a_{ik}I_k$.  When $S_i$ is projective, the formula is clear. The exactness of $\pi_i$ follows from Lemma \ref{L:TD}.

For the last statement, we suppose that $\pi_E$ is exact but $E$ is not any $S_i$. If $E$ is not projective, then we have at least two injective resolutions coming from the universal extensions:
\begin{align*}
0\to \epsilon_i(\tau E) \to  \tilde{\pi}_E^\vee(I_i)\to I_i\to 0, \quad 0\to \epsilon_j(\tau E) \to  \tilde{\pi}_E^\vee(I_j)\to I_j\to 0.
\end{align*}
The middle terms are injective because $\pi_i$ is exact. We thus obtain two epimorphisms:
$P_i\twoheadrightarrow \epsilon_i E$ and $P_j\twoheadrightarrow \epsilon_j E$, which contradicts the uniqueness of the projective cover.
If $E=P$ is projective, we get at least one sequence: $0\to\tilde{\pi}_E^\vee(I_i)\to I_i\to \nu E$, which is impossible as well.
\end{proof}

\begin{corollary} \label{C:sproj} The functor $\pi_i$ is the following ``contraction" at $i$:
\begin{align*}& \vcenter{\xymatrix@R=3ex{
\ar[dr]_{\vdots}^{A_1} & & \\
& i \ar[ur]_{\vdots}^{B_1} \ar[dr]_{B_n}	 \\
\ar[ur]_{A_m} & &	} }
\qquad\qquad\quad\xrightarrow{\pi_i}
& \vcenter{\xymatrix@R=5ex{
\ar@<0.5ex>[rr]^{A_1 B_1} \ar@<-0.5ex>[rr]_{\vdots}|{A_1 B_2} & & \\
\ar[rr]_{A_m B_n} & &	} }
\end{align*}
The functor $\iota_i$ can be described as follows:
\begin{align*}& \vcenter{\xymatrix@R=3ex{
\ar[dr]_{\vdots}^{D_{1}} & & \\
& i \ar[ur]_{\vdots}^{E_1} \ar[dr]_{E_n}	 \\
\ar[ur]_{D_{m}} & &	} }
\qquad\qquad\quad\xleftarrow{\iota_i}
& \vcenter{\xymatrix@R=5ex{
\ar@<0.5ex>[rr]^{C_{11}} \ar@<-0.5ex>[rr]_{\vdots}|{C_{12}} & &\\
\ar[rr]_{C_{mn}} & &	} }
\end{align*}
where the vector space at $i$ is a direct sum of all target spaces of $C_{ij}$ (without repetition), the matrix $D_i=(C_{i1},C_{i2},\dots,C_{in})$, and $(E_1,E_2,\dots,E_n)$ is the identity matrix.
When $i$ is a source (resp. sink), $\pi_i$ should be understood as forgetting all $B_k$'s (resp. $A_k$'s); and $\iota_i$ is also obvious.
Moreover, we have that \begin{align*}
&\pi_i(S_j)=\begin{cases} S_j \\ S_{j-1} \\ 0
\end{cases}
& \pi_i(P_j)=\begin{cases} P_j \\ P_{j-1}  \\  \bigoplus_{k\in Q_V} a_{ik}P_k
\end{cases}
& \pi_i(I_j)=\begin{cases} I_j & \text{ if } i>j, \\ I_{j-1} & \text{ if } i<j,  \\  \bigoplus_{k\in Q_V} a_{ik}I_k  & \text{ if } i=j.
\end{cases}
\end{align*}
\end{corollary}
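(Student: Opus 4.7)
The plan is to unpack the two-step construction of $\tilde{\pi}_E$ from Lemma \ref{L:OP} in the special case $E = S_i$, where the Hom and Ext groups admit a completely concrete description, and then translate to $\pi_i$ via Corollary \ref{C:OC}. First I would compute $\Hom_Q(S_i, M) = \bigcap_{k} \Ker(M(B_k)) \subseteq M_i$ directly, and compute $\Ext_Q(S_i, M) = \Coker\bigl(M_i \xrightarrow{(B_1,\dots,B_n)} \bigoplus_k M_{hB_k}\bigr)$ by applying $\Hom_Q(-,M)$ to the canonical resolution \eqref{eq:resimp} of $S_i$. Feeding this into the first diagram of the sketch of Lemma \ref{L:OP}, the universal extension $M \hookrightarrow M'$ enlarges $M_i$ so that $(B_1,\dots,B_n)$ becomes surjective onto $\bigoplus_k M_{hB_k}$, and the subsequent universal homomorphism $M' \twoheadrightarrow \tilde{\pi}_{S_i}(M)$ quotients by $\bigcap_k \Ker(B_k)$ at vertex $i$. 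The combined effect is to canonically identify $\tilde{\pi}_{S_i}(M)_i \cong \bigoplus_k M_{hB_k}$, with outgoing maps being the projections and, for each incoming arrow $A_j$, the composition $M_{tA_j} \to \tilde{\pi}_{S_i}(M)_i$ having $k$-th component $B_k A_j$.

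Next, I would pass through the equivalence of Corollary \ref{C:OC}: vertex $i$ is now redundant because the outgoing maps identify $\tilde{\pi}_{S_i}(M)_i$ with $\bigoplus_k M_{hB_k}$, so the equivalent representation of $Q_i$ is obtained by erasing this vertex and introducing an arrow $tA_j \to hB_k$ valued in $B_k A_j$ for each pair. This is precisely the contraction claimed for $\pi_i$. The description of $\iota_i$ is then forced by the inverse equivalence: given a representation $N$ of $Q_i$, place $\bigoplus_j N_{hB_j}$ at vertex $i$, take the outgoing $E_k$ to be the projections (so $(E_1,\dots,E_n)$ is the identity matrix), and take each incoming $D_j$ to be the column vector $(C_{j1},\dots,C_{jn})^T$; the identity $E_k D_j = C_{jk}$ certifies that $\pi_i \iota_i$ recovers $N$. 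The degenerate source/sink cases correspond to $m=0$ or $n=0$ in the above recipe and cause no change in the argument.

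For the last block of formulas, the case $\pi_i(S_j)$, $j \neq i$, is immediate from the contraction (all arrows through $i$ act as zero on $S_j$), modulo the canonical vertex renumbering on $Q_i$ (labels $<i$ unchanged, labels $>i$ shifted down by one, inherited from the ordering by $\delta$-vectors noted after Lemma \ref{L:TD}); the case $j = i$ is the defining $S_i \in \langle S_i\rangle$. The formulas for $\pi_i(I_j)$ are read off by combining Lemma \ref{L:TD} with Lemma \ref{L:sproj}: since $\pi_i(-) = \Hom_Q(-, I^b)^*$ and $I^b$ is built from $b(\tilde{\pi}_i^\vee(I))$, the value on the injective $I_j$ is exactly the corresponding summand of $I^b$, giving $I_j$ when $j \neq i$ (and the shift when $j > i$) and $\bigoplus_k a_{ik} I_k$ when $j = i$. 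The projective formulas follow by the dual route via Remark \ref{R:proj}: for $j \neq i$ one of the two steps (universal extension if $S_i$ is not projective, universal homomorphism if it is) is trivial, and the remaining step applied to $P_j$ is read off directly from the contraction description on paths from $j$; for $j = i$ one uses the canonical projective resolution of $S_i$ to identify the output with $\bigoplus_k a_{ik} P_k$.

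The main bookkeeping obstacle is pinning down the canonical isomorphism $\tilde{\pi}_{S_i}(M)_i \cong \bigoplus_k M_{hB_k}$ naturally in $M$ and checking that, under this identification, both the universal property of $r_M$ and the inverse equivalence $\iota_i$ are the maps written in the statement. Once this is done, every other assertion in the corollary reduces to a direct inspection of the contraction diagram or to a single application of Lemma \ref{L:sproj}.
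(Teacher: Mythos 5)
Your derivation of the contraction description and of $\iota_i$ is sound, and it is genuinely different from (and more explicit than) the paper's: the paper gets the contraction in one line from the bimodule formula of Lemma \ref{L:TD}, after noting via Lemma \ref{L:sproj} that $b(\tilde{\pi}_i^\vee(I))=\{I_j\}_{j\neq i}$, so that $\pi_i(M)_v=\Hom_Q(M,I_v)^*=M_v$ and the new arrows act by the composites $B_kA_l$; you instead trace the universal extension and universal homomorphism of Lemma \ref{L:OP} by hand. Both routes work, and yours has the advantage of exhibiting the natural isomorphism $\tilde{\pi}_{S_i}(M)_i\cong\bigoplus_k M_{hB_k}$ that makes the $\iota_i$ description transparent. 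Your treatment of $\pi_i(S_j)$ and $\pi_i(P_j)$ also matches the paper's (projectivity a priori from Remark \ref{R:proj}, indecomposability for $j\neq i$ from Theorem \ref{T:rel}, and the resolution \eqref{eq:resimp} for $j=i$).

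The gap is in the injective column. You assert that since $\pi_i(-)=\Hom_Q(-,I^b)^*$, ``the value on the injective $I_j$ is exactly the corresponding summand of $I^b$,'' i.e.\ you identify $\pi_i(I_j)$ with $\tilde{\pi}_i^\vee(I_j)$ as computed in Lemma \ref{L:sproj}. These are different functors: $\tilde{\pi}_i^\vee={}_{\tau S_i}\tilde{\pi}$ is the dual projection used to build the representing bimodule, and Lemma \ref{L:TD} only says $\pi_i(M)_v=\Hom_Q(M,I^b_v)^*$, which for $M=I_j$ must still be evaluated. For $j\neq i$ the conclusions coincide (both give $I_j$ suitably renumbered, since $I_j$ already lies in $S_i^\perp$). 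For $j=i$ they do not: the argument parallel to your projective case applies the exact $\pi_i$ to the injective copresentation $0\to S_i\to I_i\to\bigoplus_{a\colon ha=i}I_{ta}\to 0$, yielding a sum indexed by the arrows \emph{into} $i$, i.e.\ $\bigoplus_k a_{ki}I_k$, whereas $\tilde{\pi}_i^\vee(I_i)=\bigoplus_k a_{ik}I_k$ is indexed by the arrows out of $i$. A check on $Q=1\to 2$ shows they differ: $I_1=S_1$, so $\pi_1(I_1)=0$, while $\tilde{\pi}_1^\vee(I_1)=I_2\neq 0$. (This also indicates that the indices of $a$ in the printed $i=j$ case of the $I$-column are transposed relative to the convention $a_{ik}=\ext_Q(S_i,S_k)$ of Lemma \ref{L:sproj}; your shortcut reproduces the printed formula but not the actual value of $\pi_i(I_i)$.)
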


\begin{proof} Now $b(\tilde{\pi}_i^\vee(I))$ contains all indecomposable injective modules of $\Mod(kQ)$ but $I_i$. The description of $\pi_i$ follows from Lemma \ref{L:TD} immediately.
The last statement about $\pi_i(S_j)$ is clear from this description.
We knew in priori that each $\pi_i(P_j)$ is projective, and indecomposable for $i\neq j$ by Theorem \ref{T:rel}. So for $i\neq j$, the result follows also from Lemma \ref{L:TD}. For $i=j$, we apply $\pi_i$ to the exact sequence \eqref{eq:resimp} and get
$$0\to \pi_i(\bigoplus_{a:i\to k} P_k)=\bigoplus_{a:i\to k} P_k\to \pi_i(P_i) \to \pi_i(S_i)=0.$$
The argument for $\pi_i(I_j)$ is similar.
\end{proof}

\begin{corollary} \label{C:rel2} Let $F=\tau^{-1}S_i$ if $S_i$ is not injective, otherwise let $F=P_i$. Then we get a relation of second kind: $\tilde{\pi}_i\iota_F=\sigma$ is an equivalence.

\end{corollary}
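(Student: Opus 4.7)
The plan is to reduce the claim to showing that $\tilde{\pi}_i$ restricts to an equivalence ${}^\perp S_i \xrightarrow{\sim} S_i^\perp$.

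First I would identify $F^\perp$ with ${}^\perp S_i$ in both cases. If $S_i$ is non-injective, then $F = \tau^{-1}S_i$ is a non-projective indecomposable (since $\tau^{-1}$ of an indecomposable non-injective is non-projective), so the AR-duality $E^\perp = {}^\perp \tau E$ for non-projective $E$ (recalled just before Corollary \ref{C:rel2}) gives $F^\perp = {}^\perp \tau F = {}^\perp S_i$. If $S_i = I_i$ is injective, then $F = P_i$ and the dual identity $P^\perp = {}^\perp \nu P$ gives $F^\perp = {}^\perp \nu P_i = {}^\perp I_i = {}^\perp S_i$. Thus in either case $\iota_F$ is the inclusion ${}^\perp S_i \hookrightarrow \Mod(Q)$, and $\sigma := \tilde{\pi}_i\iota_F$ is simply the restriction of $\tilde{\pi}_i$ to ${}^\perp S_i$, landing in $S_i^\perp$ by Lemma \ref{L:OP}.

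Next I would establish full faithfulness of this restriction directly from the adjunction $\tilde{\pi}_i \dashv \iota_i$. For $M, M' \in {}^\perp S_i$, the unit $r_{M'}\colon M' \to \tilde{\pi}_i M'$, produced by the universal-extension then universal-homomorphism steps in Lemma \ref{L:OP}, has both kernel and cokernel in the subcategory $\langle S_i\rangle$ of direct sums of $S_i$. Since $M \in {}^\perp S_i$ means $\Hom(M, S_i) = \Ext(M, S_i) = 0$, applying $\Hom(M, -)$ to the short exact sequences into which $r_{M'}$ decomposes yields $\Hom(M, M') \cong \Hom(M, \tilde{\pi}_i M')$, which by adjunction equals $\Hom(\tilde{\pi}_i M, \tilde{\pi}_i M')$. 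This is precisely full faithfulness of the restricted functor.

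For essential surjectivity, Corollary \ref{C:OC} says both ${}^\perp S_i$ and $S_i^\perp$ are equivalent to $\Mod(Q_{S_i})$ and in particular have the same finite number of isomorphism classes of simple objects. Since both are length categories, it suffices to show that $\tilde{\pi}_i$ carries the simples of ${}^\perp S_i$ bijectively to the simples of $S_i^\perp$. The main obstacle is this combinatorial check: it can be carried out using the explicit formulas for $\tilde{\pi}_i$ applied to simples, projectives, and injectives provided by Lemma \ref{L:sproj} and Corollary \ref{C:sproj} together with their left-handed duals, with the inverse functor being the dual left orthogonal projection ${}_{S_i}\tilde{\pi}\colon \Mod(Q) \to {}^\perp S_i$ restricted to $S_i^\perp$.
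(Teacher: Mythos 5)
Your reduction to $F^\perp={}^\perp S_i$ and your full-faithfulness argument coincide with the paper's: the paper likewise uses the adjunction $\Hom_Q(\sigma M,\sigma N)=\Hom_Q(M,\sigma N)$, the fact that for the simple $S_i$ the kernel and cokernel of the unit $N\to\tilde{\pi}_i N$ lie in $\operatorname{add}(S_i)$, and $F^\perp={}^\perp S_i$ to conclude $\Hom_Q(M,\sigma N)=\Hom_Q(M,N)$. That part is fine.

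The gap is in essential surjectivity. A fully faithful exact functor between length categories that carries simples bijectively onto simples need \emph{not} be dense: to run the induction on length you must also know that the essential image is closed under extensions, i.e.\ that $\Ext_Q(T',T)\to\Ext_Q(\sigma T',\sigma T)$ is surjective, and full faithfulness controls $\Hom$ but says nothing about $\Ext^1$. (A fully faithful exact functor can hit every simple and still miss length-two objects.) The paper sidesteps this by counting \emph{projectives} rather than simples: $\sigma$ is left adjoint to the exact functor ${}_i\pi\,\iota_i$, hence preserves projectives; being a full exact embedding it sends the indecomposable projectives of $F^\perp$ injectively, hence bijectively, onto those of $S_i^\perp$ (both categories have $|Q_V|-1$ of them by Corollary \ref{C:OC}); and then every object of $S_i^\perp$ is the cokernel of a map between projectives in the image, so fullness plus right exactness give density. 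Projective presentations transport along a right exact functor; composition series do not. A secondary problem with your route: the ``combinatorial check'' you defer is not covered by Lemma \ref{L:sproj} and Corollary \ref{C:sproj}, which compute $\pi_i$ on the simples, projectives and injectives of $\Mod(Q)$, not on the relative simples of ${}^\perp S_i$ (these are generally not simple in $\Mod(Q)$). If you want to salvage your outline, either switch the counting argument from simples to projectives as above, or actually prove that ${}_{S_i}\tilde{\pi}$ restricted to $S_i^\perp$ is a quasi-inverse --- you mention this only in passing, but establishing it would by itself finish the proof.
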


\begin{proof} We first show that $\sigma$ is fully faithful. By adjunction $\Hom_Q(\sigma(M),\sigma(N))=\Hom_Q(M,\sigma(N))$.
Since $S_i$ is simple, the universal homomorphism is $hS_i\to N$ is injective. We can show that $\Hom_Q(M,\sigma(N))=\Hom_Q(M,N)$ by playing homological algebra in the construction of Lemma \ref{L:OP} and using the fact that $F^\perp=^\perp S_i$.

Next, we show that $\sigma$ is an embedding. Suppose that $\sigma(N_1)=\sigma(N_2)$. By fully-faithfulness, let $f\in\Hom_Q(N_1,N_2)$ correspond to the identity morphism in $\Hom_Q(\sigma(N_1),\sigma(N_2))$. We claim that $f$ is an isomorphism. If not, it has a kernel or cokernel, say a cokernel $C$. We apply exact $\sigma$ to the exact sequence $N_1\xrightarrow{f} N_2\to C\to 0$, and conclude that $\sigma(C)=0$. By the construction of Lemma \ref{L:OP}, this implies that $hS_i\twoheadrightarrow C$. But this is impossible since $F$ is not orthogonal to $S_i$.

Finally, we notice that $\sigma$ is left adjoint to $\pi_F^\vee\iota_i=$$_i\pi\iota_i$, which is also exact. Hence, $\sigma$ preserves projective objects. Since $S_i^\perp$ has the same number of nonisomorphic projective indecomposable modules as $F^\perp$, being a full exact embedding, $\sigma$ must be dense as well.
It is well-known that a fully faithful and dense functor is an equivalence.
\end{proof}

If $E$ is projective and $Q$ has no oriented cycles, then each $\pi_{P_i}(P_j)$ is indecomposable and projective.
It is natural to make the following convention:
$$\pi_{P_i}(P_j)=\begin{cases} P_j & \text{ if } i>j, \\ P_{j-1} & \text{ if } i<j. \end{cases}$$
Under this convention, it is easy to verify that
\begin{lemma} \label{L:pproj} \begin{align*}
& \varpi_{P_i}(S_j)=\begin{cases} S_j & \text{ if } i>j, \\ S_{j-1} & \text{ if } i<j,  \\  \bigoplus_{a:i\to k} P_{k-1}  & \text{ if } i=j.\end{cases}
\end{align*}\end{lemma}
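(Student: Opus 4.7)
The plan is to compute $\tilde{\varpi}_{P_i}(S_j)$ directly from the construction of $\tilde{\pi}_{P_i}$ given in Lemma \ref{L:OP}, handling the cases $i\neq j$ and $i=j$ separately, and then apply the equivalence $P_i^\perp\simeq\Mod(Q_{P_i})$ together with the relabeling convention stated just above the lemma.

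For $i\neq j$, projectivity of $P_i$ gives $\Ext_Q(P_i,S_j)=0$, so the universal extension step in the construction of Lemma \ref{L:OP} is trivial: $E^e=0$ and $M'=S_j$. Moreover $\Hom_Q(P_i,S_j)=S_je_i=0$ since $i\neq j$, so the universal homomorphism is zero and $\tilde{\pi}_{P_i}(S_j)=S_j$. This is nonzero, hence $\tilde{\varpi}_{P_i}(S_j)=S_j$. Transporting through $P_i^\perp\simeq\Mod(Q_{P_i})$ under the convention $\pi_{P_i}(P_j)=P_j$ for $i>j$ and $\pi_{P_i}(P_j)=P_{j-1}$ for $i<j$, the object identified as $S_j$ in $P_i^\perp$ becomes $S_j$ or $S_{j-1}$ in $\Mod(Q_{P_i})$ respectively.

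For $i=j$, again $\Ext_Q(P_i,S_i)=0$ so $M'=S_i$, while $\Hom_Q(P_i,S_i)\cong k$ is spanned by the canonical surjection $P_i\twoheadrightarrow S_i$. Thus $P_i^h=P_i$ and the universal homomorphism is this surjection, so $\tilde{\pi}_{P_i}(S_i)=\operatorname{Coker}(P_i\twoheadrightarrow S_i)=0$. By the definition of $\tilde{\varpi}$ preceding Theorem \ref{T:rel}, we conclude $\tilde{\varpi}_{P_i}(S_i)=\operatorname{Ker}(P_i\twoheadrightarrow S_i)=\operatorname{rad}(P_i)=\bigoplus_{a:i\to k}P_k$. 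To finish, one verifies that this lies in $P_i^\perp$: $\Ext_Q(P_i,P_k)=0$ by projectivity, and $\Hom_Q(P_i,P_k)=P_ke_i=e_i(kQ)e_k=0$ because the existence of an arrow $i\to k$ together with the acyclicity of $Q$ forbids any path from $k$ back to $i$. Under the standard numbering convention of the paper (arrows go from smaller to larger index, as in the definition of $A_n$), every arrow $i\to k$ has $k>i$, so the relabeling convention turns each $P_k$ into $P_{k-1}$ in $\Mod(Q_{P_i})$, yielding the stated formula.

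The only substantive point is the $i=j$ case, where one must notice that $\tilde{\pi}_{P_i}(S_i)=0$ and then invoke the second clause of the definition of $\tilde{\varpi}$ to read off the radical; the other ingredients are either immediate from projectivity or routine bookkeeping with the relabeling convention. The mildest technical obstacle is simply keeping the index shift consistent across the two pieces of notation (original $Q$ versus $Q_{P_i}$), which is automatic once the acyclic numbering is fixed.
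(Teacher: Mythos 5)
Your computation is correct and is exactly the routine verification the paper leaves implicit (it offers no proof beyond ``it is easy to verify,'' the intended argument being precisely this direct application of the construction in Lemma \ref{L:OP}: $\tilde{\pi}_{P_i}(S_j)=S_j$ for $i\neq j$, while for $i=j$ the universal homomorphism is $P_i\twoheadrightarrow S_i$, so $\tilde{\varpi}_{P_i}(S_i)=\operatorname{rad}(P_i)=\bigoplus_{a:i\to k}P_k$, followed by the relabeling convention stated just above the lemma). Your explicit attention to the two points that could trip one up --- that $\operatorname{rad}(P_i)$ indeed lies in $P_i^\perp$ by acyclicity, and that the index shift $P_k\mapsto P_{k-1}$ presumes arrows go from smaller to larger index --- is a welcome addition rather than a deviation.
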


\section{The Category $\Delta(\B{Q})$} \label{S:Delta}

\begin{definition} A {\em coface configuration (with reflection)} $\B{Q}$ in a category $\D{C}$ is a quiver with the following assignment. \begin{enumerate}
\item For each $v\in\B{Q}_V$, we assign an object $\B{Q}(v)\in\D{C}$ (possibly with repetition).
\item For each loop $l\in\B{Q}_A$ on $v$, we assign an automorphism $\B{Q}(l)$ of $\B{Q}(v)$.
\item For each arrow $a: u\to v$ with $u>v$ (resp. $u<v$), we assign a {\em projection} (resp. {\em section}) $\B{Q}(a): \B{Q}(u)\to\B{Q}(v)$.
\end{enumerate}
Here, projections (resp. sections) are certain class of epimorphisms (resp. monomorphisms) in $\D{C}$. If there is no section in the assignment, we call such $\B{Q}$ a coface configuration without reflections.
\end{definition}

\begin{definition} \label{D:DeltaQ}
The {\em underlying quiver} $Q$ of $\B{Q}$ is obtained by forgetting the assignment.
A {\em relation} $p_1-p_2$ in $\B{Q}$ is two paths $p_1,p_2$ such that their evaluation in $\B{Q}$ is equal.
Collecting all such relations, we get a quiver with relations $\D{Q}$ of $\B{Q}$.

Given a coface configuration $\B{Q}$, we can associate it with a category $\Delta(\B{Q})$ such that its quiver with relations (Section \ref{S:Qcomp}) is the same as that of $\B{Q}$.
A coface configuration $\B{Q}$ is called {\em rooted at} $r\in Q_V$ if $r$ is the unique source of $Q$.

\end{definition}

\begin{remark} Fixing a base PID $k$, we have the algebra $k\D{Q}$ associated to $\D{Q}$, also denoted by $k\B{Q}$.
By definition, a covariant functor from $\Delta(\B{Q})$ to a $k$-linear category $\D{A}$ is a representation
of $\D{Q}$ in $\D{A}$, which is equivalent to a representation of $k\B{Q}$ in $\D{A}$.
By a slight abuse of language, we may call such a representation a representation of $\B{Q}$.
\end{remark}

\begin{definition} \label{D:CCQ} The category of category of quiver representations is the category $\D{CC}(Q)$ with objects the module categories $\Mod(Q)$ and morphisms the {\em cocontinuous functors}, i.e., those with a right adjoint. We define the sections in $\D{CC}(Q)$ to be full exact embeddings in $\D{CC}(Q)$, and the projections to be those functors having sections as their right adjoints.
A {\em quiver coface configuration}, or {\em QCC} in short, is a coface configuration in the category $\D{CC}(Q)$. It is called a {\em quiver exceptional configuration}, or {\em QEC} in short if every section is a composition of $\iota_E$'s, and every projection is a composition of $\pi_E$'s for exceptional $E$'s. A QCC is called {\em q-connected} if every vertex-quiver is connected.

\end{definition}

\begin{remark} Note that the embedding $\iota_E$ itself has a right adjoint $\pi_E^\vee$, so it is automatically a morphism in $\D{CC}(Q)$.
By Eilenberg-Watts theorem, a cocontinuous functor $F:\Mod(Q)\to\Mod(Q')$ is represented by a $kQ'$-$kQ$-bimodule $B$, i.e., $F=B\otimes -$. We can also define the category $\D{CC}(Q)$ using {\em continuous functors}, i.e., those with a left adjoint, or equivalently those of form $\Hom_{Q'}(B,-)$. But it is not hard to see from the Yoneda embedding and the Hom-tensor adjunction that the two definitions are dual to each other. Moreover, it is easy to see from the AR-duality that different versions of QEC's in the two definitions are essentially the same.
\end{remark}
By abuse of notation, we may write $Q$ for $\Mod(Q)$, $E$ for $\pi_E$ and $E^r$ for $\iota_E$.


\begin{example} \label{Ex:BAn} The underlying quiver with relations of the following QEC rooted at the quiver $A_3$ is the one considered in Example \ref{Ex:simple3}:
$$\vcenter{\xymatrix@R=0ex{
A_3 \ar@{_<-^>}[r]^{\B{S}_3}_{\B{S}_2^r} & A_2 \ar@{_<-^>}[r]^{\B{S}_2}_{S_1^r}  & A_1 }
} $$
$$\simplextri$$
More generally, the QEC $\B{A}:$
$$\BAn{}{A_}{\B{S}_n}{\B{S}_{n-1}^r}{S_2,S_1}{S_1^r}$$
gives rise to the usual simplex category $\Delta(\B{A})$. We can read off the simplicial relations \eqref{eq:simprel1}--\eqref{eq:simprel2} from Corollary \ref{C:sproj}, \ref{C:rel2} and Theorem \ref{T:rel}. In fact, all relations are fundamental of first kind, except for $S_{i+1}S_i^r=\Id$. These relations follow from Corollary \ref{C:rel2} with the fact that $S_{i+1}=\tau S_i$.

\end{example}

\begin{remark} We can slightly change projections and sections, and get an isomorphic QEC:
$$\BAn{}{A_}{\B{S}_{n-1},P_1}{\B{S}_{n-2}^r,P_1^r}{S_1,P_1}{P_1}.$$
We also have a QEC $\B{A}^\vee$ whose algebra is trivial dual to that of $\B{A}$:
$$\BAn{}{A_}{\B{S}_{[2,n]}}{\B{S}_{n}^r}{S_2}{S_2^r,S_1^r}.$$

We can also extend quivers, to be more precise the module categories, but still define an isomorphic QEC as in Example \ref{Ex:BAndim}.
However, we have an obvious notion of {\em minimal models} to rule out that one.
\end{remark}

\begin{example} \label{Ex:BAncyc} The QEC $\mr{\B{A}}:$
$$\BAn{\ar@(ul,ur)|{t}}{\mr{A}_}{\B{S}_n}{\B{S}_{n-1}^r}{S_2,S_1}{S_1^r}$$
gives rise to the usual cycle category $\Delta(\mr{\B{A}})$. Here, $t$ is the clockwise renumbering of vertex: $i\mapsto i+1, n\mapsto 1$. Besides the usual simplicial relations,
it is clear that we have the cyclic relations \eqref{eq:cycrel1}--\eqref{eq:cycrel2}.

\end{example}

\begin{example} \label{Ex:BSn}
The underlying quiver with relations of the following QEC rooted at the quiver $A_3^1$ is the one considered in Example \ref{Ex:cube3}:
$$\vcenter{\xymatrix@R=0ex@C=9ex{
A_3^1 \ar@{_<-^>}[r]^{\B{S}_{2},\B{P}_{2}}_{\B{P}_2^r} & A_2  \ar@{_<-^>}[r]^{S_1,P_1}_{P_1^r} & A_1}
} $$

$$\cubetwo$$
The situation is similar for the quiver $A_3^{1*}$:
$$\vcenter{\xymatrix@R=0ex@C=9ex{
A_3^{1*} \ar@{_<-^>}[r]^{\B{S}_{[2,3]},\B{I}_{[2,3]}}_{\B{I}_{[2,3]}^r} & A_2  \ar@{_<-^>}[r]^{S_2,I_2}_{I_2^r} & A_1}
} $$
However, it cannot be realized as a QEC rooted at $A_3$ (exercise).

More generally, the QEC $\B{S}:$
$$\BSn{}{}{}$$
where $S_n$ is the $(n-1)$-subspace quiver, gives rise to the usual {\em n-cube category} $\Delta(\B{S})$.
Using Theorem \ref{T:rel}, we can read off the relations from the identities in Corollary \ref{C:sproj}, \ref{C:rel2} and Lemma \ref{L:pproj}.
In fact, we get the cubical relation \eqref{eq:cuberel1}--\eqref{eq:cuberel2} by substituting $\pi_i,\pi_{-i},\iota_{-i}$ by $S_i,P_i,P_i^r$ respectively.

If we consider the symmetry of $S_n$:
$$\BSn{\ar@(ul,ur)|{\mathbbm{t}_{n-1}}}{\ar@(ul,ur)|{\mathbbm{t}_{n-2}}}{\ar@(ul,ur)|{t_1}}$$
then we get the $n$-cube category with permutations.
\end{example}

\begin{example} \label{Ex:prism} Consider the QEC rooted at the quiver $A_4^1$: 
$$\vcenter{\xymatrix@R=2ex@C=9ex{
&  A_3  \ar@{_<-^>}[dr]^-{S_2,S_1,P_1}_{S_1^r,P_1^r} \\
A_4^1	\ar@{_<-^>}[ur]^-{S_3,P_3}_{P_3^r} \ar@{^<-_>}[dr]_-{S_2,S_1,P_1}^{S_1^r,P_1^r} & & A_2 \ar@{_<-^>}[r]^-{S_1,P_1}_{P_1^r} & A_1\\
&  A_3^1  \ar@{^<-_>}[ur]_{\B{S}_2,\B{P}_2}^{\B{P}_2^r} \ar@(dl,dr)|{t}	}
} $$
Readers should be able to find all relations using Corollary \ref{C:sproj}, \ref{C:rel2}, Lemma \ref{L:pproj}, and Theorem \ref{T:rel}, and fill in the lower dimensional faces in the picture below.\\
This gives rises to a {\em triangular prism category}:
$\vcenter{\prism}$
\end{example}

\begin{example} \label{Ex:pyramid}
Consider the QEC rooted at the quiver $D_4$:
$$\vcenter{\xymatrix@R=2ex@C=9ex{
&  A_3  \ar@{_<-^>}[dr]^{S_2,S_1,P_1}_{S_1^r,P_1^r} \\
D_4	\ar@{_<-^>}[ur]^{\B{S}_2,\B{P}_2}_{\B{P}_2^r} \ar@{^<-_>}[dr]_{S_3} \ar@(ld,lu)|{t} & & A_2 \ar@{_<-^>}[r]^-{S_1,P_1}_{P_1^r} & A_1	 \\
&   A_3^{1} \ar@{^<-_>}[ur]_{\B{S}_2,\B{P}_2}^{\B{P}_2^r}	\ar@(dl,dr)|{t}	}
} $$
Readers should be able to find all relations using Corollary \ref{C:sproj}, \ref{C:rel2}, Lemma \ref{L:pproj}, and Theorem \ref{T:rel}, and fill in the lower dimensional faces in the picture below. \\
This gives rise to a {\em square pyramid category}:
$\vcenter{\pyramid}$
\end{example}

It is also a good exercise to find the basic algebra of the above two QEC's. For the detail of all examples and more general results, we refer the readers to \cite{F0}. Although all above examples have nice geometric interpretation, many useful QEC's do not have {\em obvious} geometric meaning.

\begin{definition} \label{D:QECgen} Given any quiver $Q$ with a set of exceptional objects $\B{E}=\{E_1,E_2,\dots,E_n\}$, we can {\em generate} a QEC without reflections rooted at $Q$ {\em up to foldings} as follows.

First suppose that there is an automorphism $\sigma\in\Aut_0(Q)\times N$ such that $\sigma(E_i)=E_j$. Then {\em if we wish}, we can perform a {\em source-folding}, that is add the automorphism $\sigma$ on $Q$. Let $Q_i:=\pi_{E_i}(Q)$ and $\B{E}_i$ be the set of all exceptional objects of form $\varpi_{E_i}(E_j)$. Note that after performing the above source-folding, we can remove $E_i$ or $E_j$ from $\B{E}$ if $E_i\neq E_j$, but we still need both to compute $\B{E}_i$.

After all possible source-foldings, we have the following QEC:\\
$$\QSn{Q}{Q_1}{Q_2}{Q_{n-1}}{Q_n}$$
Suppose that there is some equivalence $\sigma:\Mod(Q_i)\to\Mod(Q_j)$ such that $\sigma(\B{E}_{i})=\B{E}_{j}$. Then {\em if we wish}, we can perform a {\em target-folding}, that is
delete $Q_i$ and put the arrows $E_i$ toward $Q_j$. This $E_i$ should be understood as the original one composed with $\sigma$.
Recursively we can repeat this procedure for each $Q_i$ with $\B{E}_i$ until $\B{E}_i$ is empty.

In the above procedure, if we performed all possible foldings, then we call the generation {\em compressed}. If we took $\B{E}_i$ to be the set of all {\em connected} exceptional objects of form $\varpi_{E_i}(E_j)$, then we get a q-connected QEC. If we took $\B{E}_i$ to be only a part of connected exceptional objects, we call the generation {\em unsaturated}. By a slight abuse of language, by a QEC without foldings we mean that its underlying quiver has no multi-arrows between two vertices.

\end{definition}

Let us forget all sections for every example so far. We found that all examples fell into this type of construction, and all examples are compressed and q-connected. In Example \ref{Ex:pyramid}, we filtered $\varpi_{P_i}(S_i)=P_2$ out of the projections from $A_3$ to ensure q-connectedness.
Note that if $\B{E}$ is an exceptional collection, then all $E_{ij}:=\varpi_{E_i}(E_j)$ are exceptional by Theorem \ref{T:rel}. All examples so far are generated by exceptional collections. However, only in Example \ref{Ex:BAn}, $\B{E}$ is an exceptional sequence. Later we will see examples not generated by exceptional collections.

\begin{definition} A projection $\B{Q}(a)$ in a coface configuration $\B{Q}$ is called {\em augmented} if it is the only projection from $ta$ to $ha$ and $ha$ is a sink of $\B{Q}$.
$\B{Q}$ is called {\em GReedy} if its underlying quiver with relations (and sections) is GReedy.
\end{definition}

\begin{definition} \label{D:MGC} Given any GReedy coface configuration $\B{Q}$, a {\em GReedy completion} of $\B{Q}$ is a GReedy coface configuration obtained from $\B{Q}$ by adding some sections of unaugmented projections in $\B{Q}$.
\end{definition}

\begin{proposition} \label{P:MGC} Let $\B{Q}_1$ and $\B{Q}_2$ be two GReedy completions of a coface configuration $\B{Q}$ by adding sets of sections $S_1$ and $S_2$ respectively. Then the completion obtained by adding $S_1\cup S_2$ to $\B{Q}$ is still GReedy. In particular, there is  the unique maximal GReedy completion of $\B{Q}$.
\end{proposition}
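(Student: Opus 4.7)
The plan is to verify the GReedy condition directly for the combined completion $\B{Q}_{12}:=\B{Q}\cup(S_1\cup S_2)$, and then derive the second sentence as a formal consequence. The key preliminary observation is a stability principle: since sections are, by definition, non-loop arrows going from strictly lower to strictly higher degree, completing a coface configuration by further sections leaves the classification of every previously existing arrow (non-raising, non-lowering, loop) unchanged, and in particular introduces no new loops. Thus the non-raising arrows of $\B{Q}_{12}$ are precisely those of $\B{Q}$, while its non-lowering arrows are exactly those of $\B{Q}$ together with $S_1\cup S_2$.

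For the existence half of the GReedy condition, I invoke the inductive reduction recorded just after Definition \ref{D:GReedy}: it suffices to exhibit a factorization $p=sr$ for paths of the form $p=a_ra_s$, with $a_r$ non-raising and $a_s$ non-lowering. I split on where $a_s$ lives. If $a_s$ is already a section of $\B{Q}$, then $p$ is a path in $\B{Q}$, and the GReedy-ness of $\B{Q}$ (required by Definition \ref{D:MGC}) yields the factorization inside $\B{Q}\subseteq\B{Q}_{12}$. If $a_s\in S_1$, then $p$ is a path in $\B{Q}_1$, and the GReedy-ness of $\B{Q}_1$ provides $p=s_1r_1$ with $s_1,r_1$ built from arrows of $\B{Q}_1\subseteq\B{Q}_{12}$; by the stability principle, $s_1$ remains non-lowering and $r_1$ remains non-raising in $\B{Q}_{12}$. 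The case $a_s\in S_2$ is symmetric.

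For the uniqueness part, I exploit that the ``peak'' vertex $w$ at which $s$ terminates and $r$ begins is forced to be the unique maximum-degree vertex appearing in $p$, thanks to injectivity of the degree function $d$. Because completions add no loops, the loop-submonoid at $w$ in $\B{Q}_{12}$ coincides with that in $\B{Q}$. Consequently, any factorization of a length-two $p=a_ra_s$ in $\B{Q}_{12}$ uses only arrows from the sub-completion selected by the location of $a_s$, and uniqueness up to loops transfers directly from the corresponding $\B{Q}$, $\B{Q}_1$, or $\B{Q}_2$.

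The final sentence of the proposition then drops out: the family of subsets $S$ of sections of unaugmented projections of $\B{Q}$ such that $\B{Q}\cup S$ is GReedy is closed under pairwise — hence finite — unions by what was just proved, and since we work throughout with finite quivers this family admits a largest element, namely the unique maximal GReedy completion. The main obstacle is the stability principle justifying that the degree-classification of each arrow is unchanged under completion; everything else is a clean case analysis. A secondary subtlety is that two factorizations in $\B{Q}_{12}$ could a priori use arrows of both $S_1$ and $S_2$, but this possibility is ruled out at length two because the only non-lowering arrow available below the peak is $a_s$ itself, locking the factorization inside a single sub-completion.
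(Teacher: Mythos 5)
Your existence argument is sound and is essentially the paper's (the case split on whether $a_s$ lies in $\B{Q}$, $S_1$, or $S_2$, noting that the non-raising arrow $a_r$ always lies in $\B{Q}$, so the whole length-two path sits inside one of the three GReedy configurations). The gap is in your uniqueness argument. First, the reduction to paths of the form $a_ra_s$ is stated in the paper (remark after Definition \ref{D:GReedy}) only for the \emph{existence} part of the GReedy condition; uniqueness must be checked for arbitrary paths, and a general path in the combined completion can genuinely mix sections from $S_1$ and $S_2$, so it need not live in either $\B{Q}_1$ or $\B{Q}_2$ and uniqueness cannot simply be ``transferred.'' Second, even at length two your locality claim fails: in a factorization $p=sr$ the intermediate vertex is a \emph{valley} (the path $r$ descends first, then $s$ ascends), not the maximum-degree vertex of $p=a_ra_s$ (which is the peak $ha_s$ of the original path), and there is nothing preventing the ascending leg $s$ from using sections of $S_2$ even when $a_s\in S_1$ --- the factorization is an identity of evaluations, not of literal arrow sequences, so ``the only non-lowering arrow available'' is not a meaningful constraint.

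The paper's uniqueness argument is semantic rather than combinatorial, and this is exactly what makes it insensitive to which completion the arrows come from: the evaluation of any composite of non-raising arrows is an epimorphism (projections are epimorphisms, loops are automorphisms) and the evaluation of any composite of non-lowering arrows is a monomorphism, so two factorizations $p=sr=s'r'$ give two epi-mono factorizations of the same morphism, which agree up to an isomorphism of the middle object, i.e.\ up to loops. You should replace your uniqueness paragraph with this observation; your closing argument for the existence of a unique maximal completion (closure under finite unions plus finiteness of the set of candidate sections) is then fine.
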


\begin{proof} The paths not raising (resp. lowering) the degree correspond to the projections (resp. sections) in $\B{Q}$.
By the remark after Definition \ref{D:GReedy}, we verify the existence of the factorization for morphisms of form $f=ps$, where $p$ and $s$ is a projection and a section respectively. This is certainly true if we take the union of sections. Since projection (resp. section) is an epimorphism (resp. monomorphism), the uniqueness part of the GReedy condition follows from the uniqueness of the epi-mono factorization of morphisms in the category of sets.
\end{proof}

\begin{example} \label{Ex:aug} Consider the augmented $\B{A}_n:$
$$\BAnaug{}{A_}{\B{S}_n}{\B{S}_{n-1}^r}{S_2,S_1}{S_1^r}{S_1}{}{}.$$
The projection $S_1:A_1\to A_0$ is augmented, so we can not add $S_1^r$ to the completion. Otherwise the maximal GReedy completion will be the following QEC:
$$\BAnaug{}{A_}{\B{S}_n}{\B{S}_{n}^r}{S_2,S_1}{S_2^r,S_1^r}{S_1}{}{S_1^r}.$$ This is uninteresting because its algebra is semisimple.

The cube-shaped QEC in Example \ref{Ex:srooted} has three augmented projections to a common sink, but the QEC of Example \ref{Ex:BBn} has two augmented projections toward different sinks. For both examples, the maximal GReedy completions add no sections.
\end{example}

So far all QEC's in our examples are the maximal GReedy completions. This is the type of QEC's that we will consider throughout this notes. To complete a saturated QEC $\B{Q}$, the toy case that readers should keep in mind is the diamond diagram below Theorem \ref{T:rel}. If $E\perp F$, then we can add $\iota_E$ and $\iota_{E'}$. We can add $\iota_F$ if there is a relation of second kind $\pi_E\iota_F=\sigma\in\B{Q}_A$. More generally, we can add $\iota_E: Q_E\to Q$ if for any projection $\pi_F$ from $Q$, either $E\perp F$ and $\iota_{E'}\in\B{Q}_A$ or $\pi_F\iota_E=\sigma\in\B{Q}_A$.

\begin{example} Given any QEC $\B{Q}$ without foldings and reflections rooted at a quiver $Q_r$ and a representation $M$ of $Q_r$, we can associate a natural representation $\rho_M$ of $\B{Q}$ in $\Mod(Q_r)$ as follows. We first assign $M\xrightarrow{r_M}\tilde{\pi}_E(M)$
to $Q_r\xrightarrow{\pi_E}Q_E$, where $r_M$ is the universal morphism defined in Lemma \ref{L:OP}. Here, both $r_M$ and $\tilde{\pi}_E(M)$ may differ by an automorphism in $\Pic_k(Q_E)$.
Then we can recursively extend this assignment to the whole $\B{Q}$. We thus obtained a functor $\rho: \Mod(Q_r)\to\Rep(\B{Q},\Mod(Q_r)),\ M\mapsto \rho_M$.

Apply $\Hom_Q(M,-)$ to the universal morphism $M\xrightarrow{r_M}\tilde{\pi}_E(M)$, and we get $$\End_Q(M)=\Hom_Q(M,M)\xrightarrow{}\Hom_Q(M,\tilde{\pi}_E(M))=\End_Q(\tilde{\pi}_E(M)).$$
We thus obtain a representation of $\B{Q}$ in the category of $k$-algebras by applying $\Hom_Q(M,-)$ to the representation $\rho_M$.
\end{example}

\begin{example} \label{Ex:HQ} Let $H(Q)$ be the vector space spanned by isomorphism classes of representations of $Q$ over a field $k$. Given any QCC $\B{Q}$, we can associate a natural representation $H$ of $\B{Q}$ in $\Vect_k$ as follows. To any $Q_u\xrightarrow{F} Q_v$, we assign $H(Q_u)\xrightarrow{H(F)} H(Q_v)$, where $H(F)$ is the linear map induced by $M\mapsto F(M)$.
\end{example}

\section{Configuration with dimension vectors} \label{S:QECwd}

\begin{definition} A quiver coface configuration {\em with dimension vectors} is a QCC $\B{Q}$ with for each vertex quiver $\B{Q}(v)$ a dimension vector $\alpha_v$ such that all maps in $\B{Q}$ respect dimension vectors, that is \begin{enumerate}
\item For any loop $l$ on $v$, $\B{Q}(l)$ acts on $\Rep_{\alpha_v}(\B{Q}(v))$. \\
\item For each arrow $a: u\to v$ with $u<v$, the section $\B{Q}(a)$ embeds $\Rep_{\alpha_u}(\B{Q}(u))$ into $\Rep_{\alpha_v}(\B{Q}(v))$. Projections from $v$ to $u$ are the left adjoints of those sections.
\end{enumerate}
The corresponding notion for QEC is {\em QECwd}, quiver exceptional configuration {\em with dimension vectors}. Evidently the second condition for projections is equivalent to the following: \begin{enumerate}
\item[(2')] For each arrow $a:v\to u$ with $\B{Q}(a)=\pi_E$, we have that $E\perp\alpha_v$ and $\pi_E(\alpha_v)=\alpha_u$.
\end{enumerate}
Here $E\perp\alpha$ means that $E$ is left orthogonal to a {\em general representation} \cite{S2} $M$ of dimension $\alpha$, and $\pi_E(\alpha)$ is the dimension vector of $\pi_E(M)$.
\end{definition}

Given any quiver with dimension vector $(Q,\alpha)$ and a set of exceptional objects $\B{E}=\{E_1,E_2,\dots,E_n\}\perp\alpha$, we can {\em generate up to foldings} a QECwd rooted at $Q$ as before:
$$\QSn{(Q,\alpha)}{(Q_1,\alpha_1)}{(Q_2,\alpha_2)}{(Q_{n-1},\alpha_{n-1})}{(Q_n,\alpha_n)}$$
When doing the target-folding, the equivalence $\sigma$ we consider should also respect the dimension vectors.
Note that $\varpi_{E_i}(E_j)\perp \alpha_i$ is automatic, because $\tilde{\varpi}_{E_i}(E_j)\in\innerprod{E_i,E_j}\perp \alpha$.

\begin{definition} A QECwd $\B{Q}$ is called full (resp. fully q-connected) if for each vertex $v$, the projections $\pi_E$ range over all exceptional (resp. connected exceptional) $E\perp \alpha_v$.
\end{definition}

If we start with some $(Q,\alpha)$ and all exceptional $E\perp \alpha$, the QECwd generated is full. This is because for any $F\perp \alpha_i$, we have that $\iota_{E_i}(F)\perp\alpha$ and $\pi_{E_i}(\iota_{E_i}(F))=F$.


\begin{example} \label{Ex:BAndim}
Consider the fully q-connected QECwd rooted at $(A_{n+1},I_{n+1})$ compressedly generated by $\B{S}_{n}$:
$$\BAnpone{}{A_}{\B{S}_n}{\B{S}_{n-1}^r}{S_2,S_1}{S_1^r}{S_1}{}{}.$$
Note all exceptional $E\perp I_{n+1}$ can be presented as $0\to P_v\to P_u\to E\to 0$. However, only $\B{S}_n$ are connected.
Example \ref{Ex:BAn} can be realized as a colimit of above constructions.
\end{example}

\begin{example} \label{Ex:Dn1} Consider the fully q-connected QECwd $\B{D}_n^1$ rooted at $(D_{n+1},I_{n+1})$ compressedly generated by $\B{S}_{n}:$
$$\BDnsimp{\ar@(ul,ur)|{t}}{S_2,S_1}$$

\begin{proposition} \label{P:Dn1} The algebra $k\B{D}_n^1$ is Morita equivalent to the path algebra
$$\Dncomp{a}{d}{c}{\ar@(ul,ur)|{t}}$$
with relations:
$$t^2=e_*,\quad tdt=d,\quad (c-ct)d=a(c-ct),\quad dd=0,\quad aa=0.$$
\end{proposition}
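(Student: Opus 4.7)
The plan is to prove the Morita equivalence by exhibiting an explicit sequence of breakings that carries $k\B{D}_n^1$ to the displayed path algebra, and then to verify the given relations on the resulting basic presentation via Theorem \ref{T:Br} together with the fundamental relations of Theorem \ref{T:rel}. This is a $D$-type elaboration of the argument in Proposition \ref{P:CDK}, with the extra ingredient of the $\mathbb{Z}/2$-diagram automorphism of $D_{n+1}$ producing the loop $t$.

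First I would spell out the underlying QRS of $\B{D}_n^1$. It has $2n-1$ vertices: a top row $D_{n+1}, D_n, \dots, D_4, A_3^{1*}$ connected by projections $\B{S}_{[3,n]}$ and sections $\B{S}_{[3,n-1]}^r$, a bottom row $A_n, A_{n-1}, \dots, A_2, A_1$ connected by $\B{S}_{n-1}$ and $\B{S}_{n-2}^r$, diagonals labelled by the two projections $S_1, S_2$ at each top vertex, and a loop $t$ at every top vertex coming from the $D_{n+1}$-automorphism that swaps vertices $1$ and $2$. All relations among these arrows are read off from Theorem \ref{T:rel}, Corollary \ref{C:sproj}, and Lemma \ref{L:sproj}; crucially, $t$ fixes every simple $S_i$ with $i \ge 3$ while exchanging $S_1$ and $S_2$.

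Next I would apply Theorem \ref{T:Br} along a telescoping sequence of breakings, mirroring the proof of Proposition \ref{P:CDK}: break first along the top-row sections $\B{S}_{[3,n-1]}^r$ (processed from innermost to outermost index, left to right at each level), then along the bottom-row sections $\B{S}_{n-2}^r$. On each row the breakings collapse the chain of projections into a single arrow (call them $d$ on top and $a$ on bottom) and force the Dold--Kan-style relations $dd=0$ and $aa=0$. The two diagonals $S_1, S_2$ at each surviving top vertex persist as a pair $(c, ct)$ paired by $t$. Under the renaming $D_k \mapsto k-1$, $A_3^{1*}\mapsto 2$, $A_k \mapsto (k-1)'$ for $k \ge 3$, $A_2 \mapsto 1$, $A_1 \mapsto 0$, one recovers precisely the claimed presentation, and Morita equivalence is guaranteed by iterating Lemma \ref{L:Br}.

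The main obstacle is to verify the two mixed relations. The identity $t^2 = e_*$ is immediate from the involutivity of the $D_{n+1}$-flip, and $tdt = d$ follows because $t$ commutes with every $\pi_{S_i}$ for $i \ge 3$ -- a fundamental relation of first kind. The subtler relation $(c - ct)d = a(c - ct)$ is the real content: the combination $c - ct$ represents exactly the $t$-antisymmetrization of the two diagonals, and its commutation with the horizontal differentials is inherited from the diamond identity $\pi_F \pi_E = \pi_{E'} \pi_F$ of Theorem \ref{T:rel} applied at each $D_k \to A_{k-1}$ step and then propagated through the telescope. A direct check at $n = 3$ (combining Examples \ref{Ex:simple3} and \ref{Ex:cycle3} with the $D_4$ symmetry $t$) handles the base case, and induction on $n$ completes the proof.
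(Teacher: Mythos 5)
Your proposal matches the paper's own (sketched) proof: both proceed by consecutive breakings at the top- and bottom-row projections via Theorem \ref{T:Br}, mirroring the argument of Proposition \ref{P:CDK}, and conclude by induction on $n$ with the relations read off from Theorem \ref{T:rel} and Corollary \ref{C:sproj}. The only substantive difference is that the paper interleaves the two rows' breaking sequences while you process the top row and then the bottom row; since breakings in the two rows modify idempotents at disjoint sets of vertices, this reordering is harmless.
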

\begin{proof}[Sketch of Proof.] The proof is also similar to that of Proposition \ref{P:CDK}. Consider the breakings at the sequences $(\mathbf{S}_{n-2}',\mathbf{S}_{n-1},\mathbf{S}_{n-3}',\dots,\mathbf{S}_4,\mathbf{S}_2',\mathbf{S}_3,\mathbf{S}_1')$, where $\mathbf{S}_i \text{ (resp. $\mathbf{S}_i'$)} =(S_i,\dots,S_i,S_i)$ of the top (resp. bottom) row from the leftmost to the rightmost. We prove by induction using Theorem \ref{T:Br}.
\end{proof}
\end{example}

\begin{definition} A quiver $Q$ is called sink-rooted if every vertex is path-connected to the unique sink $\infty$.
\end{definition}


\begin{example} \label{Ex:srooted} This example generalizes the last two examples. Let $Q$ be any sink-rooted quiver, then for any $i\neq \infty$, $S_i\perp I_{\infty}$. It follows from Lemma \ref{L:sproj} that $\pi_i(Q)$ is also sink-rooted, and $\pi_i(I_\infty)=I_\infty$.
We consider the QECwd $\B{Q}$ generated by $S_i, i\neq\infty$ in \cite{F2}.
For another concrete example, let us consider the quiver $D_{(1,3,2)}$:
$$\srootedABC{a}{\mathbbm{b}_3}{c_1,c_2}.$$
We obtain a QEC without foldings:
$$\cube{}{}{}{}{}{}{}{}{}$$
where the quivers are \begin{align*}
& D_{(i,j)}: 1\xrightarrow{i\text{ arrows}} 2 \xleftarrow{j\text{ arrows}} 3, \\
& A_{(i,j)}: 1\xrightarrow{i\text{ arrows}} 2 \xrightarrow{j\text{ arrows}} 3, \\
& A_{(k)}: 1\xrightarrow{k\text{ arrows}} 2.\end{align*}
Its relations are commuting relations for all squares. If we like, we can perform the target-folding at $A_{(2)}$. Its maximal GReedy completion is clearly itself.

This class of examples can be thought of as a generalization of \cite{GLW} in four aspects. First, rooted trees is a subclass of sink-rooted quivers. Second, we have more projections and sections in our setting. For example, for a source $o$, the maps $S_o$ and $S_o^r$ are invisible in \cite{GLW}. Third, we allow foldings. Forth, dimension vectors play a role, which is crucial for our applications.
\end{example}

The next example seems more akin to type $D$ quivers than Example \ref{Ex:Dn1}.

\begin{example} We consider another fully q-connected QECwd $\B{D}_n^2$ rooted at $(D_{n+1}^*,\delta=(1,2,2\dots,2,1,1))$ generated by $(\B{S}_{[2,n-1]},I_{n+1},I_{n})$.
$$\BDntwo{I_{n+1},I_{n}}{I_5,I_4}{I_4,I_3}{I_3,I_2}{S_2,S_1}{\ar@(ul,ur)|{t}}$$
Note that this generation is not compressed because we did not identify $A_3^{1*}$ with $A_3^1$.
Besides the identities in Corollary \ref{C:sproj} and Lemma \ref{L:pproj}, one can verify that \begin{align*}
\pi_{I_i}(S_j)=S_{j-1}, & \text{  if } i\in\{n-1,n\}, j\in\{2,3,\dots,n-2\};\\
\pi_{I_i}(I_j)=S_{n-2}, & \text{  if } i,j\in\{n-1,n\}, i\neq j.
\end{align*}
Then the relations can be read off from Theorem \ref{T:rel}. The proof of the following proposition is similar to that of Proposition \ref{P:Dn1}.

\begin{proposition} \label{P:Dn2} The algebra $k\B{D}_n^2$ is Morita equivalent to the path algebra
$$\Dntwocomp{d}{c}{a'}{b}{a}{\ar@(ul,ur)|{t}}$$
with relations:
\begin{align*} t^2=e_*,\quad ta't=a',\quad cd=a'c,\quad ctd=a'ct,\quad ab=ba',\\
b(c-ct)=0,\quad dd=0,\quad a'a'=0, \quad aa=0.\end{align*}
The arrow $a'/b$ satisfies all relations of $a'$ and its composition with $t$ satisfies all relations of $b$. The arrow $a'/a/b$ satisfies all relations of $a',a'$ and $b$.
\end{proposition}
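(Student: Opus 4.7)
The plan is to mimic the proof strategy of Proposition~\ref{P:Dn1}, namely consecutive breakings along carefully chosen sequences of simples (and in this case of injectives as well), and then appeal to Theorem~\ref{T:Br} to conclude Morita equivalence together with the stated relations for the basic algebra.

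First I would identify the right sequences of breakings. The QECwd $\B{D}_n^2$ has three rows: a top row of $D^*$-type quivers, a middle row of $A^1$-type quivers, and a bottom row of $A$-type quivers. For the bottom row I would use the same sequences $\mathbf{S}_i'=(S_i,\dots,S_i)$ as in Proposition~\ref{P:Dn1}. For the middle row I would use $\mathbf{S}_i=(S_i,\dots,S_i)$, but delayed so that the simple $S_i$ of the middle-row quiver only gets broken once the corresponding bottom-row simples have already been eliminated. For the top row I would use the sequence of injectives $(I_{n+1},I_n)$ at each vertex (interleaved with the $\mathbf{S}_i$'s). The overall order is analogous to $(\mathbf{S}_{n-2}',\mathbf{S}_{n-1},\mathbf{S}_{n-3}',\dots,\mathbf{S}_2',\mathbf{S}_3,\mathbf{S}_1')$, but augmented at each step by the breakings $I_{n+1},I_n$ coming from the top row. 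At each stage I would check, as required for the recursive definition of $\Br_{(a_k,\dots,a_1)}$, that the image of a section after the previous breakings is still a section of the image of the corresponding projection; this follows because each section/projection pair in the configuration is a genuine retract and the breakings only remove summands that the remaining arrows do not touch.

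Next I would run an induction on $n$ parallel to the induction in Proposition~\ref{P:Dn1}. At each inductive step, Theorem~\ref{T:Br} describes both the new algebra and the Morita-equivalence functor (via the explicit projective module $P=\oplus\mathbf{P}\,E_1\cdots E_k$). One then reads off: the loop $t$ survives because it commutes with the relevant $S_i$'s by Theorem~\ref{T:rel}; the arrows $a$, $a'$, $b$, $c$, $d$ appear as the images of what remains of the original projections after cancelling the summands corresponding to broken simples and injectives; and the relations $t^2=e_*$, $ta't=a'$, $cd=a'c$, $ctd=a'ct$, $ab=ba'$, $b(c-ct)=0$, $dd=0$, $a'a'=0$, $aa=0$ follow directly from the fundamental relations of Theorem~\ref{T:rel} combined with the explicit descriptions of $\pi_i,\iota_i,\pi_{I_j}$ given by Corollary~\ref{C:sproj} and Lemma~\ref{L:pproj}. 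The multi-arrows $a'/b$ and $a'/a/b$ at the rightmost vertices arise because, under the breakings, several originally distinct projections (coming from $S_i$'s and from $I_n, I_{n+1}$) get identified on the remaining idempotent, and the inherited relations are precisely the union of those of the individual arrows.

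The main obstacle I expect is bookkeeping. Unlike Proposition~\ref{P:Dn1}, here one must simultaneously track three rows, a nontrivial involution $t$, and the collapsing of $A_3^{1*}$ onto $A_3^1$ at the rightmost end. In particular, verifying that the breakings of $I_{n+1}$ and $I_n$ commute (in the Morita sense) with the breakings of the middle-row $S_i$'s, and that they produce the identifications $a'/b$ and $a'/a/b$ with the correct inherited relations rather than extra ones, will require a careful local analysis at each of the two rightmost vertices. Once this is settled for small $n$ (say $n=3,4$ by hand), the inductive step is straightforward, since removing the leftmost column of the configuration reduces $\B{D}_n^2$ to $\B{D}_{n-1}^2$ up to the already-broken tail, and Theorem~\ref{T:Br} together with the adjunction formula $\Hom_A(M\otimes_{A_1}N,-)=\Hom_{A_1}(M,\Hom_A(N,-))$ assembles the Morita equivalence globally.
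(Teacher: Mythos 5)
Your overall strategy---consecutive breakings at suitable sequences followed by Theorem \ref{T:Br} and an induction parallel to Proposition \ref{P:Dn1}---is exactly what the paper intends (its own proof is literally the one sentence ``similar to that of Proposition \ref{P:Dn1}''). However, there is one concrete error in your choice of breaking sequence: you propose to break at the projections $I_{n+1},I_n$ (and more generally $I_{j+1},I_j$) going from the top row to the middle row. This is not a well-defined operation. Breaking $\Br_a$ requires a section $a^r$ with $aa^r=e_{ha}$, since one passes to $e'Ae'$ with $e'=e-a^ra$; in the configuration $\B{D}_n^2$ the diagonal projections $\pi_{I_j}$ carry no sections at all (only the horizontal $S_i$'s with $S_i^r$ present do). Moreover, these diagonal projections must \emph{survive} the Morita reduction: they are precisely what becomes the arrows $c$ and $ct$ in the basic presentation, just as the sectionless $S_{n-1}:A_n^1\to A_{n-1}$ survives as $b$ and the sectionless top-row $S_{n-1}$ survives as $d$. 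If one could somehow ``break'' them, the arrows $c$ would disappear from the quotient and the target algebra would be wrong.

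The repair is straightforward: break only at the projection/section pairs actually present, i.e.\ at the simples $\B{S}_{[2,n-2]}$ of the top row, $\B{S}_{n-3}$ of the middle row, and $\B{S}_{n-3}$ of the bottom row, interleaved column by column in the same spirit as the sequence $(\mathbf{S}_{n-2}',\mathbf{S}_{n-1},\dots,\mathbf{S}_1')$ of Proposition \ref{P:Dn1} (now with three rows instead of two). After these breakings the surviving arrows on the remaining idempotents are the sectionless projections, which give $d,c,ct,a',b,a$ and the loop $t$, and the relations are read off from Theorem \ref{T:rel}, Corollary \ref{C:sproj}, Lemma \ref{L:pproj}, and the identities $\pi_{I_i}(S_j)=S_{j-1}$, $\pi_{I_i}(I_j)=S_{n-2}$ recorded just before the proposition. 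Your remaining points---checking that sections stay sections under earlier breakings, the local analysis at the two rightmost vertices producing $a'/b$ and $a'/a/b$, and the induction on $n$---are sound once the breaking sequence is corrected.
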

\end{example}

\begin{example} \label{Ex:BBn} Here is an example of a full QECwd $\B{B}^4$ rooted at a wild quiver with an imaginary Schur root generated by a non-exceptional collection. It is compressedly generated from $(B^4:=S_6,\alpha=(1,1,1,1,1,2))$ by $\{T_{ij}\}_{1\leqslant i<j\leqslant 5}$, where $\dim(T_{ij})=e_i+e_j+e_6$. Note that $T_{ij}\perp T_{kl}$ if and only if $\{i,j\}\cap\{k,l\}$ is non-empty. Moreover, if $\{i,j\}\cap\{k,l\}$ is empty, then $\pi_{ij}(T_{kl})=T_{\hat{m}}$ for some $m$, where $\dim T_{\hat{m}}=\mathbbm{1}-e_m$. Since $\innerprod{\mathbbm{1}-e_m,\mathbbm{1}-e_m}_{B^3}=0$, $T_{\hat{m}}$ is not exceptional.

$$\BBn{\ar@(dl,dr)|{\mathbbm{t}_4}}{\ar@(dl,dr)|{\mathbbm{t}_2,t}}{\ar@(dl,dr)|{t}}{\ar@(dl,dr)|{t}}{\ar@(ul,ur)|{t,t,t}}{\ar@(dl,dr)|{\mathbbm{t}_2}}
{T_{14},T_{24}}{\{T_{ij}\}_{i=1,2,3}^{j=4,5}}{\{T_{ij}\}_{i,j\neq 6}}$$
This QEC is related to the blow-up of points in the projective plane \cite{F1}. We recall the formulas of projections from \cite{F1}.
For $M_4\in \Rep_{\alpha}(B^4)$, we define
$$\pi_{ij}(M_4)=\extendedtameAfour{|jl|}{|il|}{|jm|}{|im|}{|jn|}{|in|}.$$
Here, we write $\pi_{ij}$ for $\pi_{T_{ij}}$, and use the notation $|il|$ to denote the determinant of linear map from $i$ and $l$ to $6$.
Moreover, $i<j, l<m<n, \{i,j,l,m,n\}=\{1,2,3,4,5\}$.
For $M_3\in \Rep_{\mathbbm{1}}(B^3)$, we define
$$\pi_{im}(M_3)=\extendedtameAthree{1}{2}{3}{4}{(jn)}{(jm)(in)}{(im)}{(kn)}{(km)(in)}.$$
Here, we use the notation $(ij)$ to denote the linear map on the arrow $i\to j$.
Moreover, $j<k,\{i,j,k\}=\{1,2,3\}$ and $\{m,n\}=\{4,5\}$.
For $M_2\in \Rep_{\mathbbm{1}}(B^2)$, we define
$$\pi_{i4}(M_2)=\extendedtameAtwo{1}{2}{3}{(i3)(j4)}{(j3)}{(i3)(34)}{(i4)},\qquad\text{and}\quad \pi_3(M_2)=\Bprime{1}{2}{3}{(13)(34)}{(14)}{(23)(34)}{(24)}.$$
For $M_1\in \Rep_{\mathbbm{1}}(B^1)$, we define
$$\pi_{13}(M_1)=\Bzero{1}{2}{(13)}{(12)(23)_u}{(12)(23)_l}.$$
The discrete automorphism group $S_5$ (resp. $S_3\times S_2,S_2,S_2,S_3,S_2\times S_2\times S_2$ ) acts on $B^4$ (resp. $B^3, B^2, B^1, B^0, B'$). We still denote the $i$-th transposition by $t_i$.
On each vertex, the automorphism group acts transitively on the projections, and satisfies obvious relations. Apart from obvious relations, we have relations up to automorphisms, which can be read off from:
\begin{align*} \text{On $B^4$}\qquad\qquad\\
\pi_{ki}(T_{jk})&=T_{j4}, & \pi_{ik}(T_{kj})&=T_{(j-2)5}, \\
\pi_{ki}(T_{kj})&=\begin{cases} T_{(j-2)4} & \text{ if } i<j \\ T_{(j-1)4} & \text{ if } i>j \end{cases}, &
\pi_{ik}(T_{jk})&=\begin{cases} T_{(j-1)5} & \text{ if } i<j \\ T_{j5} & \text{ if } i>j \end{cases}; \\
\text{On $B^3$}\qquad\qquad\\
\pi_{ki}(T_{kj})&=S_3, \qquad\quad\ \text{ if } i\neq j, \\
\pi_{ik}(T_{jk})&=\begin{cases} T_{(j-1)4} & \text{ if } i<j \\ T_{j4} & \text{ if } i>j \end{cases}; \\
\text{On $B^2$}\qquad\qquad\\
\pi_{i4}(T_{j4})&=T_{13} \qquad\quad\ \text{ if } i\neq j.
\end{align*}

One can check that the maximal GReedy completion is itself, so we do not have any section here.
\end{example}

\section{Delooping} \label{S:deloop}

In our coface configuration, loops are used to record information on symmetry. Those additional information can be effective to reduce the complexities of the homotopy theory. However, it causes trouble for us to take the classical homotopy or homology because usually those algebras are not basic and have infinite global dimensions. To simplify the computation, we need to get rid of those loops but hope to lose as little information as possible. So we propose to the following delooping, requiring the category $\D{C}$ to be a $k$-category.

\begin{definition} A {\em delooped coface configuration} $\B{Q}$ in a $k$-category $\D{C}$ is a quiver without loops with the following assignment. \begin{enumerate}
\item For each vertex $v$, we assign an object $\B{Q}(v)\in\D{C}$ (possibly with repetition).
\item For each arrow $a: u\to v$ with $u>v$ (resp. $u<v$), we assign a $k$-linear combination $\sum_i k_i\pi_i$ of projections (resp. $\sum_i k_i\iota_i$ of sections) in $\D{C}$.
\end{enumerate}
We require that all projections $\pi_i$ in the summation differ from each other only by automorphisms in $\D{C}$, that is, $\pi_i=\sigma_v\pi_j\sigma_u$ where $\sigma_u,\sigma_v$ are automorphisms of $\B{Q}(u),\B{Q}(v)$. We make the same requirement for sections in the linear combination.

As before, we also have the underlying quiver with relations $(Q,R)$ and the algebra $k\B{Q}$ associated to $\B{Q}$.
\end{definition}

\begin{definition} The $k$-linear category of category of quiver representations is the category $k\D{CC}(Q)$ with objects the $k$-vector spaces $H(Q)$ (Example \ref{Ex:HQ}) and morphisms $k$-linear spans of the continuous functors. A {\em delooped QCC} is a delooped coface configuration in the category $k\D{CC}(Q)$.
\end{definition}

\begin{example}  \label{Ex:symcle} The delooped QEC $\mr{\B{A}}^{\looparrowright}:$
$$\BAn{}{\mr{A}_}{\B{S}_{n+1}}{S^r,\B{S}_{n}^r}{S_2,S_1}{S^r,S_1^r},$$
where $S^r:\mr{A}_{k-1}\to \mr{A}_{k}$ equals to $\sum_{i=0}^{k-2} (-1)^{ik}tS_{k}^r t^i$, corresponds to our symcle category $\Delta(\mr{\B{A}}^{\looparrowright})$ considered in Section \ref{S:Qcomp}.
\end{example}

\begin{example} \label{Ex:Dn1dl} The algebra of this delooped QEC $\B{D}_n^{1\looparrowright}:$
$$\BDnsimp{}{S_2-S_1}$$
is Morita equivalent to the path algebra
$$\Dncomp{a}{d}{c}{}$$
with ``double complex" relation:
$$cd=ac,\quad dd=0,\quad aa=0.$$
Moreover, from the exact sequence $0\to S_1\to P_2\to S_2\to 0$, we get the long exact sequence of functors:
\begin{equation} \label{eq:longexact} \cdots\to\pi^{i-1}(-,S_2)\to\pi^i(-,S_1)\to\pi^i(-,P_2)\to\pi^i(-,S_2)\to\pi^{i+1}(-,S_1)\to\cdots.\end{equation}
Since we have the following resolutions: \begin{align*}
& 0\to S_1\to I_1\to I_2\oplus I_{2'}\to\cdots\to I_{n-1}\oplus I_{n-1'}\to I_n\to 0,  \\
& 0\to S_2\to I_2\to\cdots\to I_{n-1}\to I_n\to 0,  \\
& 0\to P_2\to I_1\to I_{2'}\to\cdots\to I_{n-1'}\to I_{n'}\to 0,
\end{align*}
we see that $\pi^i(-,S_2)$ (resp. $\pi^i(-,P_2)$) is the usual cohomology of the top (resp. bottom) row, and $\pi^i(-,S_1)$ is the usual total cohomology.
\end{example}

\begin{example} \label{Ex:Dn2dl} The algebra of this delooped QEC $\B{D}_n^{2\looparrowright}:$
$$\BDntwo{I_{n+1}-I_{n}}{I_5-I_4}{I_4-I_3}{I_3-I_2}{S_2-S_1}{}$$
is Morita equivalent to the path algebra
$$\Dntwocomp{d}{c}{a'}{b}{a}{}$$
with relations:
\begin{align*} & cd=a'c,\quad ab=ba',\\
& \ bc=0,\quad\ \ dd=0,\quad a'a'=0, \quad aa=0.\end{align*}
The arrow $a'/b$ satisfies all relations of $a'$ and $b$ except that its composition with $a':3'\to 2'$ is non-zero. The arrow $a'/a/b$ satisfies all relations of $a',a'$ and $b$.
\end{example}

\begin{example} Consider a delooped QEC $\B{B}^{4\looparrowright}:$
$$\BBndl{\sum_{i,j} (-1)^{i+j}T_{ij}}{\sum_{i,j} (-1)^{i+j}T_{ij}}{(\Id-t)(T_{14}-T_{24})}{(\Id-\sigma')S_3}{(\Id-\sigma)T_{13}}.$$
Here, $\sigma'=(t,t)$ is the automorphism of $B'$ and $t$ exchanges arrows in one pair; $\sigma$ is the automorphism of $B_0$ exchanging the upper and lower arrows.
The relations are that all paths of length two vanish. This algebra is already basic, and has finite representation type.
\end{example}

\section{A First Application} \label{S:App}

As we mentioned in the introduction, one get almost all classical (co)homology theories by varying $\D{C}$ in $S_{\D{C}}:S\to \D{C}$. Now we are going to vary $S$ among different $\Delta(\B{Q})$. We expect that those classical theories should have analogues, if not always possible.
Since we are mainly interested in noncommutative stuff, we will focus on generalizations of Hochschild and cyclic cohomology. Readers can easily adapt our construction to other settings. For example, to deal with commutative algebras, we can replace the tensor products below by the exterior products. In certain geometric setting with {\em nice} covers, we can consider $\Delta(\B{Q})$-nerves instead of simplicial nerves. Since our construction may involve addition (see examples below), in those cases the corresponding operation on covers is taking the union. So we need part of the Boolean algebra structure of sets to construct the coface maps. Recall that the coface maps in the simplicial nerves involve intersections only.

Let $Q$ be any quiver and $\alpha$ a dimension vector. By a tensor-decoration of $\Rep_\alpha(Q)$ by a $k$-module $V$, we mean the tensor product space labeled by the arrows of $Q$: $$V^{\otimes(Q,\alpha)}:=\bigotimes_{a\in Q_A} \Hom(k^{\alpha(ta)},k^{\alpha(ha)})\otimes V.$$
It is useful to visualize those ``pure" tensors on the representation space.

We recall the construction of Hochschild cohomology. Let $A$ be a $k$-algebra. Consider the quiver $A_{n+1}$ with dimension vector $(1,1,\dots,1)$ decorated by $A$:
$$\An{1}{a_1}{2}{n}{a_{n}}{n+1}{a_2}{a_{n-1}}.$$
The above picture represents a pure tensor $a_1\otimes a_2\otimes\cdots\otimes a_n$.
The Hochschild complex is related to the QEC with this dimension vector. Let us consider the coface maps given by
\begin{align*}
& \partial_1(a_1\otimes a_2\otimes\cdots\otimes a_n):=a_2\otimes a_3\otimes\cdots\otimes a_n,\\
& \partial_i(a_1\otimes a_2\otimes\cdots\otimes a_n):=a_1\otimes\dots\otimes a_{i} a_{i+1}\otimes\dots\otimes a_{n}\quad \text{ if $i>0$},\\
\intertext{and the face maps (degeneracies) given by}
& \sigma_i(a_1\otimes a_2\otimes\cdots\otimes a_n):=a_1\otimes\dots\otimes a_{i}\otimes 1\otimes a_{i+1}\otimes\dots\otimes a_{n}.
\end{align*}
In our language, if we set $\rho(A_{n},I_{n})=A^{\otimes(A_n,I_n)},\rho(S_i)=\partial_i,$ and $\rho(S_i^r)=\sigma_i$,
then $\rho$ is a representation of $\B{A}$ in the category of $k$-algebras. Its quiver complex is the {\em (reduced) Hochschild complex}.

For any $A$-$A$-bimodule $M$, we apply the functor $M_{A\otimes A^{\op}}\otimes -$ to the Hochschild complex, and we get the Hochschild homology complex of $A$ with coefficients in $M$. Similarly, we can apply the functor $\Hom_{A\otimes A^{\op}}(M,-)$, and obtain the Hochschild cohomology complex of $A$ with coefficients in $M$.

%

In what follows, for any QECwd $\B{Q}$, we always assign to the vertex $(Q,\alpha)$ the space $A^{\otimes(Q,\alpha)}$. To a projection $S_i$, we always assign the matrix multiplication corresponding to the contraction in Corollary \ref{C:sproj}. To a section $S_i^r$, we also assign the insertion of matrix $E_j$'s as in Corollary \ref{C:sproj}.
\begin{example} \label{Ex:NAsrooted} The above assignment clearly defines representations in $k\-\alg$ of QECwd's of Example \ref{Ex:srooted}. This class of examples are also studied in detail in \cite{F2}. In particular, we get representations of $\B{D}_n^1$ and $\B{D}_n^{1\looparrowright}$.
\end{example}

However, it is not true that given any QECwd, we can always attach to it a cohomology theory for noncommutative algebras. This is something expected, because our QEC's are constructed from the representation theory of $Q$, where we work with a nice field and have a group action. For example, we only know a $\B{D}_n^2$-cohomology theory for commutative algebras $k$-algebras.

\begin{example} \label{Ex:NADn2} To construct such a theory, we remain to define the coface maps corresponding to $I_k$.
Let $$M=\Dndual{r}{M_2}{M_{n-2}}{c_1}{c_2}{n-1}{\quad n}{n+1},$$
where $r,c_i,$ and $M_j$ are $1\times 2,2\times 1,$ and $2\times 2$ matrix with entries in $A$ respectively.
We define
\begin{align*}
\pi_{I_k}(M)&=\Anonei{|M_2|}{|M_i|}{|N|}{l_k}{}{n}{n-1},
\end{align*}
where $l_k=rM_2\cdots M_{n-2}c_{k}$ and $N$ is the matrix $(c_1,c_2)$. The symbol $|M|$ is the determinant of $M$.
To appreciate this new theory, we strongly recommend readers to work out this verification.

\end{example}

\begin{example} Let $\B{B}^3$ be the full {\em subQEC} of $\B{B}^4$ rooted at $B^3$. For $\B{B}^3$, if we define its representation by mimicking Example \ref{Ex:BBn}, we only get a cohomology theory for commutative algebras. For $\B{B}^4$, we even cannot find such a theory for commutative algebras.
\end{example}

The quiver complexes can deal with not only the traditional bimodules but also $n$-ary modules over $n$-ary algebras. More generally, they provide a natural framework to treat operads. This is one of the topics for our subsequential work. We must point out that this approach is different from so-called operadic (co)homology in \cite[Chapter 6,12]{LV}, which is simplicial in nature.

\section{Discussion} \label{S:Discuss}
We have several important problems, which are very difficult for us at this stage.
We saw that for all QEC's in this notes, we can find their basic algebras by breakings at a sequence, in other words their algebras are all (weakly) crisp. However, we do not know how general it is. We also saw that in general the choice of breakings are not unique, and to find the breakings is a very hard task.
\begin{problem} Characterize those QRS's whose basic algebras can be obtained by consecutive breakings. For those QRS's, design a good algorithm to find most economic choices of breakings.

\end{problem}

We can easily write the fundamental relations using the machinery developed in Section \ref{S:OP}. However, it is extremely hard to verify whether those fundamental relations generate all relations. This is true for all our examples, but we avoid writing down the checking, because the procedure is too painful. So the next question is
\begin{problem} Determine when the two kinds of fundamental relations generate all relations. Can we find an example involving other kinds of relations?
\end{problem}
All the fundamental relations of second kind that we found so far are of the form in Corollary \ref{C:rel2}, but we wonder
\begin{problem} Are there any fundamental relations of second kind $\tilde{\pi}_E\iota_F=\sigma$ for other choices of $E$ and $F$?
\end{problem}

We followed some topologists and introduced the GReedy condition. This condition turns out to be very useful in the model category theory \cite{BM}, but seems not so natural to us. It is suggested by our intuition that adding sections while keeping this condition can reduce the complexity of the representation theory. However, we have examples where this intuition fails.
Below Example \ref{Ex:aug}, we proposed some preliminary rules to ``complete" a QEC. However, we cannot guarantee in general that this will give us the maximal GReedy one.

\begin{problem} Design a good algorithm to find the maximal GReedy completion of any QEC. Is there any other approaches to produce interesting and meaningful QEC's?
\end{problem}

In this notes, one of our main examples is the ``Q-homotopy theory of type $D$".
\begin{problem} Is there a cyclic analog of this type $D$ theory? More generally, what is a cyclic analog for any Q-homotopy theory?
\end{problem}

\appendix

\section{Unnormalized Constructions and Q-Homology}

We keep the setting in Section \ref{S:QRS}.
\begin{definition}
An {\em unnormalized quiver complex (resp. cocomplex) functor} $\mu:\Mod(A)\to\Mod(A^b)$ is a $k$-linear exact functor such that $\nu$ is a quotient functor (resp. subfunctor) of $\mu$. It is called {\em classical} if it is induced from an algebra morphism $A^b\to A$ such that $e_{i1}\mapsto e_i$. It is called {\em splitting} if it is a quiver cocomplex functor in the meantime.
\end{definition}

Classical unnormalized quiver complex functors always exist, for example, the algebra inclusion defined by $e_{i1}\mapsto e_i$ and identity elsewhere. In fact, it is splitting, but this trivial case is uninteresting to us. We will deal exclusively with quiver complexes, and hope that readers can formulate the corresponding statement for quiver cocomplexes.

Let $\lambda$ be the subfunctor of $\mu$ such that the normalized functor $\nu$ is the quotient $\mu/\lambda$. Since both $\mu$ and $\nu$ are exact, the snake lemma implies $\lambda$ is exact as well.
We denote by $\D{D}$ the set of all modules of form $\lambda(M)$.
\begin{definition}
Fixing a quiver complex functor $\mu$, the composite $\underline{H}=q \mu$ is called the {\em Q-homology functor} of $A$. For any $T\in\Mod(A^b)$, the {\em $i$-th classical homology relative to $T$, or $T$-homology} is the functor $H_i(T,-):=\Ext_{A^b}^i(T,\mu(-))$.

We call the pair $(\mu,\nu)$ (resp. the triple $(\mu,\nu,T)$) a {\em homotheory} (resp. {\em $T$-homotheory}).
It is called {\em consistent} if $\underline{\pi}$ and $\underline{H}$ (resp. $\pi_i(T,-)$ and $H_i(T,-)$) are the same.
\end{definition}

\begin{remark} Unlike the homotopy functors, the homology functors depend on the choice of $\mu$.
Clearly, being consistent is equivalent to that $\Ext_{A^b}^i(T,-)$ vanishing on $\D{D}$, or equivalently on $\lambda(\nu^{-1}(S_v))$ for each simple module $S_v$ in $\Mod(A^b)$.

For cohomology theory, one can work with quiver cocomplexes, and define the $T$-cohomology functors by $\Ext_{A^b}^i(\mu(-),T)$.
\end{remark}

Now we generalize to representations in a $k$-linear abelian category $\D{A}$. For simplicity, we keep our assumption in Section \ref{S:Qcomp} that $\D{A}$ is the category $\Mod(B)$ for some $k$-algebra $B$. We are going to construct a functor $\mu_{\D{A}}:\Mod(A\otimes B^{\op})\to\Mod(A^b\otimes B^{\op})$ from $\mu$. Given any $M\in \Mod(A\otimes B^{\op})$, we can give $\mu(M)$ a right $B$-module structure. We view the right multiplication by $f\in B$ as an $A$-module homomorphism, and define the action of $f$ on $\mu(M)$ by $\mu(f)$. This action is compatible with the $A^b$-module structure. By our construction, we have the commutative diagram:
$$\vcenter{\xymatrix{
\Mod(A\otimes B^{\op}) \ar[r]^{\mu_{\D{A}}}_{\nu_{\D{A}}} \ar[d]_F & \Mod(A^b\otimes B^{\op}) \ar[d]^F \\
\Mod(A) \ar[r]^{\mu}_{\nu} & \Mod(A^b) }
}$$
where $F$ is the forgetful functor $\Mod(B)\to\Mod(k)$. We conclude that $\nu_{\D{A}}$ is a quotient functor of $\mu_{\D{A}}$ as well.
A sneaky way to define $\mu_{\D{A}}$ is to say that it is the tensor product of $\mu$ and the identity functor on $\Mod(B)$.
When $\mu$ is classical, $\mu_{\D{A}}$ is the functor induced from the algebra map $A^b\otimes B \to A\otimes B$.


\begin{definition}
The composite $\underline{H}_{\D{A}}=q \mu_{\D{A}}$ is called the {\em Q-homology functor in $\D{A}$}. For any $T\in\Mod(A^b)$, the {\em $i$-th classical homology relative to $T$}, or {\em $T$-homology in $\D{A}$} is the functor $H_i(T,-):=\Ext_{A^b}^i(T,\mu_{\D{A}}(-))$.

The homotheory and being consistent are defined completely analogously. It is quite obvious that if $(\mu,\nu)$ or $(\mu,\nu,T)$ is consistent, then $(\mu_{\D{A}},\nu_{\D{A}})$ or $(\mu_{\D{A}},\nu_{\D{A}},T)$ is consistent as well.
\end{definition}

\begin{example} In Proposition \ref{P:CDK},
we choose a classical $\mu$ as the one induced by $d\mapsto \sum_{i=1}^{k} (-1)^i\pi_i$ for $d:k\to k-1$. It follows from the simplicial relations that $dd=0$, so this is indeed an algebra inclusion. This $\mu$ is the usual (truncated) Moore complex functor in the simplicial theory. It is splitting.
Using the formula of $P$, it is not hard to verify that $\lambda(\nu^{-1}(S_i))$ are all injective, so $(\mu,\nu)$ is consistent. This is well-known \cite[Theorem 8.3.8]{We}.
\end{example}

\begin{example} In Proposition \ref{P:SDK},
we choose a classical $\mu$ as the one induced by $B\mapsto (1+(-1)^kt)\iota^r$ for $B:k-1\to k$ and $d$ as before. One can check that this is indeed an algebra inclusion \cite[9.8]{We}. This $\mu$ is not splitting. Although most $\lambda(\nu^{-1}(S_i))$ are not injective, it follows from the SBI sequence and the consistency of the simplicial theory that $(\mu,\nu,S_1)$ is consistent up to degree $n-1$, that is, $\pi_i(S_1,-)=H_i(S_1,-)$ for $i<n-1$. For the untruncated version, it is well-known that the $S_1$-homotheory is consistent \cite[9.8.4]{We}.
\end{example}

\begin{example} In Proposition \ref{P:CuDK}, for cubical objects
we choose a classical $\mu$ as the one induced by $d_i\mapsto \pi_i+\pi_{-i}$. It follows from the cubical relations that $d_{j+1}d_i=d_jd_i$, so this is indeed an algebra inclusion. Similarly for cubical objects with permutations, we set $d\mapsto \pi_1+\pi_{-1}$ and $t_i\mapsto t_i$.
We leave it for readers to verify that the direct summands of $\lambda(\nu^{-1}(S_1))$ contain all simples except $S_1$.
So there is no non-trivial consistent homotheory available. Even so, the functor $\mu$ seems still interesting.
\end{example}

\begin{example} In Example \ref{Ex:Dn1dl}, we choose a classical $\mu$ as the one induced by $d\mapsto \sum_{i=3}^{k} (-1)^{i+1}S_i, a\mapsto \sum_{i=1}^{k} (-1)^iS_i, c\mapsto S_2-S_1$, where $d:D_{k+1}\to D_k$ and $a:A_{k+1}\to A_k$. It is easy to check that this defines an algebra inclusion. Although most $\lambda(\nu^{-1}(S_i))$ are not injective, it follows from the long exact sequence \eqref{eq:longexact} and the consistency of the simplicial theory that $(\mu,\nu,S_1)$ is consistent. We can define a similar quiver complex functor for $\B{D}_n^{2\looparrowright}$ of Example \ref{Ex:Dn2dl}.
\end{example}

\begin{example} We consider some natural choices of $\mu$ for Example \ref{Ex:srooted} in \cite{F2}. Similar results were also obtained in \cite{GLW}. However, for sink-rooted quivers rather than rooted trees, the ``generic" examples look like the example generated from the quiver $D_{(1,3,2)}$, where $k\B{Q}$ is already basic, and $\mu$ has to be the identity.
\end{example}

\section*{Acknowledgement}
The author wants to thank Philip Hackney, Birge Huisgen-Zimmermann, Ryan Kinser, and Igor Kriz for helpful comments.
He would like to thank Philip Hackney and Professor Bernhard Keller for drawing his attention to the paper \cite{GLW} and \cite{Ci} respectively;
Ryan Kinser for pointing out an error in Section \ref{S:Qcomp}.
The author is grateful to Professor Igor Kriz for encouraging him to apply graduate school back in 2003. He always feels indebt to his advisor Harm Derksen for too many things that cannot be written in a single page.

\bibliographystyle{amsplain}

\end{document}